\numberwithin{equation}{section}
\newcommand{\dif}{\mathop{}\mathrm{d}}
\newcommand{\n}{\mathop{}\mathrm{n}}
\newcommand{\bbR}{\mathbb{R}}
\newcommand{\bl}{\boldsymbol}
\newcommand{\T}{\mathcal{T}}
\newcommand{\E}{\mathcal{E}}
\newcommand{\wt}{\widetilde}
\newcommand{\lal}{\langle}
\newcommand{\ral}{\rangle}
\newcommand{\pt}{\partial}
\newtheorem{thm}{Theorem}[section]
\newtheorem{lem}{Lemma}[section]
\newtheorem{cro}{Corollary}[section]
\newtheorem{re}{Remark}[section]
\newcommand{\wuhao}{\fontsize{10.5pt}{12.6pt}\selectfont}        
\title{An arbitrary order mixed finite element method with boundary value correction for the Darcy flow on curved domains}
\date{}
\author{Yongli Hou and
Yanqiu Wang
\footnote{Corresponding author}}
\begin{document}

\maketitle

\wuhao
\begin{abstract}
We propose a boundary value correction method for the Brezzi-Douglas-Marini mixed finite element discretization of the Darcy flow with non-homogeneous Neumann boundary condition
on 2D curved domains. The discretization is defined on a body-fitted triangular mesh, i.e. the boundary nodes of the mesh lie on the curved physical boundary. However, the boundary edges of the triangular mesh, which are straight, may not coincide with the curved physical boundary.
A boundary value correction technique is then designed to transform the Neumann boundary condition from the physical boundary to the boundary of the triangular mesh. One advantage of the boundary value correction method is that it avoids using curved mesh elements and thus reduces the complexity of implementation. 
We prove that the proposed method reaches optimal convergence for arbitrary order discretizations.
Supporting numerical results are presented.

  \noindent\emph{Key words:} mixed finite element method, Neumann boundary condition, curved domain, boundary value correction method
\end{abstract}

\vspace{0.9em}
\section{\textbf Introduction}

Many practical problems arising in science and engineering are posed on the curved domain $\Omega$.
Classical finite element methods defined on a polygonal approximation domain $\Omega_h$ often suffer from
an additional geometric error due to the difference between $\Omega$ and $\Omega_h$.
This leads to a loss of accuracy for higher-order discretizations \cite{Strang_Berger_The_change_in_solution_due_to_change_in_domain1973,Thomee_Polygonal_domain_approximation_in_Dirichlet's_problem1973}. Moreover, when the problem is equipped with an essential boundary condition, effective ways to transfer the boundary condition from $\partial\Omega$ to $\partial\Omega_h$ must be designed.
Different strategies have been proposed to solve this problem, for example,
the isoparametric finite element method \cite{isoparametric_1968,Optimal_isoparametric_finite_elements_1986} and
the isogeometric analysis \cite{Isogeometric_analysis_Hughes_2005,Isogeometric_Analysis_Cottrell_2009}.
Another strategy is the boundary value correction method \cite{1972},
which shifts the essential boundary condition from $\partial\Omega$ to $\partial\Omega_h$, and results in a modified variational formulation. One advantage of the boundary value correction method is that it avoids using curved mesh elements and thus reduces the complexity of implementation. 
Recently, there are many works utilizing the boundary correction strategy,
including the discontinuous Galerkin method via extensions from subdomains \cite{Cockburn_Boundary_conforming_DG_2009,Cockburn2012}, transferring technique based on the path integral \cite{Oyarzua_high_order_MFEM},
the shifted boundary method \cite{Part1,high_order_SBM}, the cutFEM \cite{Burman_boundary_value_correction_2018},
the boundary-corrected virtual element method \cite{High_order_VEM_2D3D} and the boundary-corrected weak Galerkin method \cite{Yiliu_WG}, etc.

For the mixed finite element method (MFEM), the Neumann boundary condition becomes essential. Again, higher-order MFEM suffers from a loss of accuracy on curved domains.
In \cite{Bertrand_2014_lowest,Bertrand_2014_High_order,Bertrand_2016_High_order}, the authors studied a parametric Raviart-Thomas MFEM on curved domains, which is a generalization of the isoparametric method to the MFEM.
In \cite{Puppi_cutFEM}, a cutFEM method is proposed in the curved domain, which is based on a primal mixed formulation \cite{book:FE_RT}.
The authors of \cite{Puppi_cutFEM} show that the cutFEM method reaches suboptimal convergence.

In this paper, we propose a new boundary corrected MFEM based on the primal mixed formulation \cite{book:FE_RT}, and prove that it has optimal convergence rate.
Similarly to \cite{fractual_non_matching_2012}, we weakly impose the Neumann boundary condition.
Then, a boundary value correction technique is designed to pull the Neumann boundary data from $\partial\Omega$ to $\partial\Omega_h$. However, unlike the boundary correction in \cite{1972,high_order_SBM,Part1}, which considers the Dirichlet boundary condition, the Neumann boundary condition involves the outward normal vector on the boundary, which is different in $\partial\Omega$ and $\partial\Omega_h$. This poses extra difficulty in the design and analysis of our scheme.
Finally, following \cite{Pei_Cao_XFEM_CMA}, we added a term $(\mathrm{div}\,\cdot,\mathrm{div}\,\cdot)$ to the discrete formulation to increase stability.

   The paper is organized as follows. In Section \ref{sec2:notation}, we introduce some notation and settings.
   In Section \ref{sec3:model_problem}, we describe the model problem and introduce the boundary value correction method.
   In Section \ref{sec4:discrete}, the discrete space and the discrete form are defined and analyzed.
   In Section \ref{sec5:error}, we prove the optimal error estimate.
   In Section \ref{sec7:numerical}, numerical results are presented. 
   Finally, we draw our conclusions in Section \ref{sec8:conclusion}.

\section{Notations and preliminaries}\label{sec2:notation}

Let $\Omega$ be a bounded open set in $\bbR^2$ with Lipschitz continuous and piecewise $C^2$ boundary $\Gamma$. Let $\T_h$ be a body-fitted triangulation of $\Omega$, i.e., all boundary vertices of $\T_h$ lie on $\Gamma$. We assume that $\T_h$ is geometrically conforming, shape regular, and quasi-uniform.
Denote by $\Omega_h$ the polygonal region occupied by the triangular mesh $\T_h$, and by $\Gamma_h$ the boundary of $\Omega_h$. When $\Omega$ has a curved boundary, the polygonal region $\Omega_h$ does not coincide with $\Omega$, as shown in Fig. \ref{Fig:tilde_K}. Denote by $\E_h^b$ the set of all boundary edges in $\T_h$ and by $\T_h^b$ all mesh elements that contain at least one edge in $\E_h^b$. For each $e\in\E_h^b$, denote by $\tilde e\subset \Gamma$ the curved edge cut by two end points of $e$ (see Fig. \ref{Fig:tilde_K}). We further assume that each $\tilde{e}$ is $C^2$ continuous, i.e. $\tilde{e}$ cannot cross the connection point of two $C^2$ continuous pieces of $\Gamma$. Note that $\tilde{e}$ may coincide with $e$ when part of $\Gamma$ is flat.

We use $\Omega_h^e$ to denote a crescent-shaped region surrounded by $e$ and $\tilde{e}$, as shown in Fig. \ref{Fig:tilde_K}.
There are three possibilities: (a) $\Omega_h^e\subset \Omega\backslash \Omega_h$; (b) $\Omega_h^e\subset \Omega_h\backslash \Omega$; (c) $e=\tilde{e}$ and $\Omega_h^e = \emptyset$.
If $\Omega$ is convex, we have $\Omega_h\subset \Omega$.
But when $\Omega$ is not convex, there is no inclusion relationship between $\Omega$ and $\Omega_h$. 

\begin{figure}[!htbp]
  \centering
  \subfigure[]
  {
  \begin{tikzpicture}
  \draw[red,very thick] (-1.3,0)--(1,0);
  \draw[thick] (-1.3,0)--(-0.3,-2);
  \draw[thick] (1,0)--(-0.3,-2);
  \draw[blue,thick] (1,0) arc (40:140:1.5);
  \draw (-0.2,0.5) node [below] {$\Omega_h^e$};
  \draw[red] (0.6,0) node [below] {$e$};
  \draw[blue] (0.2,0.6) node [right] {$\wt e$};
  \end{tikzpicture}
  }\qquad\qquad
  \subfigure[]
  {
  \begin{tikzpicture}
  \draw[red,very thick] (-1.3,0)--(1,0);
  \draw[thick] (-1.3,0)--(-0.3,-2);
  \draw[thick] (1,0)--(-0.3,-2);
  \draw[blue,thick] (-1.3,0) arc (220:319:1.5);
  \draw (-0.2,0) node [below] {$\Omega_h^e$};
  \draw[red] (0.6,0) node [above] {$e$};
  \draw[blue] (0.2,-0.6) node [right] {$\wt e$};
  \end{tikzpicture}
  }\qquad\qquad
  \subfigure[]
  {
  \begin{tikzpicture}
  \draw[red,very thick] (-1.3,0)--(1,0);
  \draw[blue,,dotted,very thick] (-1.3,0)--(1,0);
  \draw[thick] (-1.3,0)--(-0.3,-2);
  \draw[thick] (1,0)--(-0.3,-2);
  \draw[] (0.0,0) node [above] {$ e=\wt{e}$};
  \end{tikzpicture}
  }
  \caption{(a) The case when $\Omega_h^e\subset \Omega\backslash \Omega_h$. 
  (b) The case when $\Omega_h^e\subset \Omega_h\backslash \Omega$.
  (c) The case when $e=\wt e$ and $\Omega_h^e = \emptyset$.}
\label{Fig:tilde_K}
\end{figure}
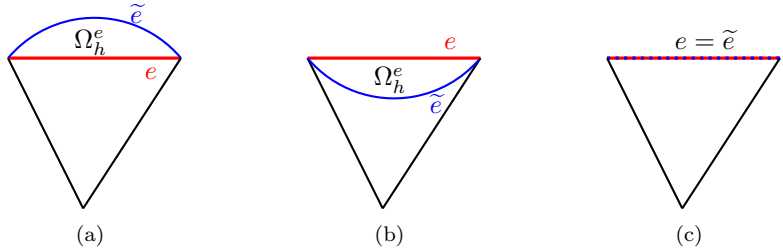

Throughout the paper, we assume that the mesh $\T_h$ is geometrically conforming, shape regular, and quasi-uniform \cite{FEM_Brenner}.
For each triangle $K\in{\T}_h$, denote by $h_K$ the diameter of $K$. Let $h=\max_{K\in{\T}_h}h_K$. Similarly, for each edge $e\in \E_h^b$, denote by $h_e$ the length of $e$. 
Denote by $P_k(K)$ the space of polynomials on $K\in\T_h$
with degree less than or equal to $k$.
We use the standard notation for Sobolev spaces \cite{FEM_Brenner}. 
Let $H^m(S)$, for $m\in\bbR$ and $S\subset\bbR^2$ be the usual Sobolev space equipped with the norm $\|\cdot\|_{m,S}$ and the seminorm $|\cdot|_{m,S}$.
When $m=0$, the space $H^0(S)$ coincides with the square integrable space $L^2(S)$.
Denote by $L_0^2(S)$ the subspace of $L^2(S)$ consisting of functions with mean value zero.
The above notation extends to a portion $s\subset\Gamma$ or $s\subset\Gamma_h$.
For example, $\|\cdot\|_{m,s}$ is the Sobolev norm on $H^m(s)$. 
We also use the notation $(\cdot,\cdot)_S$ to indicate the $L^2$ inner-product on $S\subset\bbR^2$,
and $\lal\cdot,\cdot\ral_s$ to indicate the duality pair on $s\subset\Gamma$ or $s\subset\Gamma_h$.

All of the above notations extend to vector-valued functions, following the usual rule of product spaces. We use bold face letters to distinguish the vector-valued function spaces from the scalar-valued ones. For example, $\bl P_k(K) = [P_k(K)]^2$ and $\bl H^m(S) = [H^m(S)]^2$ are the spaces of vector-valued functions with each component in $P_k(K)$ and $H^m(S)$, respectively. 
Define
\begin{equation*}
\begin{aligned}
H(\mathrm{div},S)&=\{\bl v\in \bl L^2(S):\mathrm{div}\,\bl v \in L^2(S)\},\\
H^m(\mathrm{div},S)&=\{\bl v\in \bl H^m(S):\mathrm{div}\,\bl v \in H^m(S)\}.\\
\end{aligned}
\end{equation*}
The norm in $H(\mathrm{div},S)$ is defined by
$$\| \bl v\|_{H(\mathrm{div},S)}=(\| \bl v\|_{0,S}^2+\|\mathrm{div}\, \bl v\|_{0,S}^2)^{1/2}.$$

Throughout the paper, we use $\lesssim$ to denote less than or equal up to a constant,
and the analogous notation $\gtrsim$ to denote greater than or equal up to a constant.

Finally, we introduce some well-known inequalities. 
\begin{lem}\label{lem:TRACE}
(Trace Inequality \cite{FEM_Brenner}). For $K\in\T_h$, we have
\begin{equation*}
\begin{aligned}
\|v\|_{0,\pt K}{\lesssim}~h_K^{-1/2}\| v\|_{0,K}+h_K^{1/2}|v|_{1,K},\qquad \forall v\in H^1(K).\\
\end{aligned}
\end{equation*}
\end{lem}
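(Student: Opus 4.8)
The plan is the standard two-step argument: first prove the inequality on a fixed reference triangle, then transfer it to a general element $K\in\T_h$ by an affine change of variables, carefully tracking the powers of $h_K$.

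\emph{Reference estimate.} I would fix a reference triangle $\widehat K$, say with vertices $(0,0)$, $(1,0)$, $(0,1)$, and establish the scale-free trace bound $\|\widehat v\|_{0,\pt\widehat K}\lesssim\|\widehat v\|_{1,\widehat K}$ for all $\widehat v\in H^1(\widehat K)$. To keep this self-contained I would first prove it for $\widehat v\in C^\infty(\overline{\widehat K})$: choosing an interior point $x_0$ and the vector field $\bl\xi(x)=x-x_0$, one has $\bl\xi\cdot\bl n\ge c_0>0$ on $\pt\widehat K$ with $c_0$ depending only on $\widehat K$ (since a triangle is convex), so the divergence theorem applied to $\widehat v^{\,2}\bl\xi$ yields
\[
c_0\int_{\pt\widehat K}\widehat v^{\,2}\,\dif s\le\int_{\pt\widehat K}\widehat v^{\,2}(\bl\xi\cdot\bl n)\,\dif s=\int_{\widehat K}\big(\widehat v^{\,2}\,\mathrm{div}\,\bl\xi+2\,\widehat v\,\bl\xi\cdot\nabla\widehat v\big)\,\dif x\lesssim\|\widehat v\|_{1,\widehat K}^2.
\]
The density of $C^\infty(\overline{\widehat K})$ in $H^1(\widehat K)$ then extends the bound to all of $H^1(\widehat K)$.

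\emph{Affine scaling.} For $K\in\T_h$ let $F_K:\widehat K\to K$, $F_K(\widehat x)=B_K\widehat x+b_K$, be the affine map and set $\widehat v:=v\circ F_K\in H^1(\widehat K)$. The change-of-variables formulas give, for each edge $e=F_K(\widehat e)$ of $K$,
\[
\|\widehat v\|_{0,\widehat K}^2=|\det B_K|^{-1}\|v\|_{0,K}^2,\qquad\|\widehat v\|_{0,\widehat e}^2=\tfrac{|\widehat e|}{|e|}\|v\|_{0,e}^2,\qquad|\widehat v|_{1,\widehat K}^2\le\|B_K\|^2\,|\det B_K|^{-1}\,|v|_{1,K}^2,
\]
the last since $\nabla\widehat v=B_K^{T}(\nabla v)\circ F_K$. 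Shape regularity and quasi-uniformity of $\T_h$ give $\|B_K\|\lesssim h_K$, $|\det B_K|\gtrsim h_K^2$, and $|e|\lesssim h_K$, with constants depending only on the shape-regularity parameter. Summing over the (at most three) edges of $K$ and inserting the reference estimate,
\[
\|v\|_{0,\pt K}^2\lesssim h_K\|\widehat v\|_{0,\pt\widehat K}^2\lesssim h_K\big(\|\widehat v\|_{0,\widehat K}^2+|\widehat v|_{1,\widehat K}^2\big)\lesssim h_K\big(h_K^{-2}\|v\|_{0,K}^2+|v|_{1,K}^2\big),
\]
and taking square roots gives exactly the stated inequality.

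I do not expect a genuine obstacle here, since the result is classical — it is precisely the inequality cited from \cite{FEM_Brenner}. The only points needing care are the density argument on the reference element and, more importantly, verifying that the constants in $\|B_K\|\lesssim h_K$, $|\det B_K|\gtrsim h_K^2$, and $|e|\lesssim h_K$ are uniform over the mesh, i.e. depend only on the shape-regularity constant of $\T_h$ and not on $h_K$ or the particular element; this is exactly where the mesh assumptions enter.
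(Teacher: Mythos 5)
Your proof is correct: the paper gives no argument for this lemma, citing it directly from the Brenner--Scott reference, and your reference-element construction (divergence theorem with the vector field $\bl\xi(x)=x-x_0$ on $\widehat K$, density, then affine scaling with the bounds $\|B_K\|\lesssim h_K$, $|\det B_K|\gtrsim h_K^2$) is exactly the standard proof of that cited result, with all powers of $h_K$ tracked correctly. The only cosmetic remark is that quasi-uniformity is not needed here -- shape regularity alone gives the elementwise bounds $\|B_K\|\lesssim h_K$ and $|\det B_K|\gtrsim h_K^2$, since the estimate is local to a single $K$.
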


\begin{lem}\label{lem:inverse}
(Inverse Inequality \cite{FEM_Brenner}). Given integers $0\le m\le l$. For $K\in\T_h$, we have
\begin{equation*}
\begin{aligned}
|q|_{l, K}{\lesssim}~h_K^{m-l}| q|_{m,K},\qquad \forall q\in P_l(K).
\end{aligned}
\end{equation*}
\end{lem}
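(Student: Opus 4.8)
The plan is the standard affine-scaling argument: transport everything to a fixed reference triangle $\hat K$, prove a scale-invariant version of the inequality there using finite-dimensionality of $P_l$, and scale back using shape regularity. Fix once and for all a reference triangle $\hat K$, and for each $K\in\T_h$ let $F_K:\hat K\to K$, $F_K(\hat x)=B_K\hat x+b_K$, be the affine bijection. Shape regularity and quasi-uniformity give the classical bounds $\|B_K\|\lesssim h_K$, $\|B_K^{-1}\|\lesssim h_K^{-1}$, and $|\det B_K|\sim h_K^2$. For $q\in P_l(K)$ set $\hat q=q\circ F_K\in P_l(\hat K)$. The chain rule together with the change-of-variables formula in the integral yields the usual transformation estimates $|q|_{l,K}\lesssim \|B_K^{-1}\|^{\,l}\,|\det B_K|^{1/2}\,|\hat q|_{l,\hat K}$ and $|\hat q|_{m,\hat K}\lesssim \|B_K\|^{\,m}\,|\det B_K|^{-1/2}\,|q|_{m,K}$.

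The core step is the scale-invariant inequality on $\hat K$: there is a constant $C=C(\hat K,l,m)$ with $|\hat q|_{l,\hat K}\le C\,|\hat q|_{m,\hat K}$ for all $\hat q\in P_l(\hat K)$. When $m=0$ this is immediate, since $|\cdot|_{0,\hat K}=\|\cdot\|_{0,\hat K}$ is a norm and $\hat q\mapsto|\hat q|_{l,\hat K}$ is a seminorm on the finite-dimensional space $P_l(\hat K)$, hence dominated by a multiple of the $L^2$-norm. When $m\ge 1$ one must be slightly more careful because $|\cdot|_{m,\hat K}$ is only a seminorm, with kernel $P_{m-1}(\hat K)$; here the hypothesis $m\le l$ enters, guaranteeing $P_{m-1}(\hat K)\subseteq P_{l-1}(\hat K)$, i.e. the kernel of $|\cdot|_{m,\hat K}$ is contained in the kernel of $|\cdot|_{l,\hat K}$. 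Consequently both seminorms descend to the finite-dimensional quotient $P_l(\hat K)/P_{m-1}(\hat K)$, on which $|\cdot|_{m,\hat K}$ becomes a genuine norm, and the desired bound follows from equivalence of norms on a finite-dimensional space (equivalently, a compactness argument on the unit sphere of the quotient).

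Chaining the three inequalities above then gives
\begin{equation*}
|q|_{l,K}\;\lesssim\;\|B_K^{-1}\|^{\,l}\,|\det B_K|^{1/2}\,|\hat q|_{l,\hat K}\;\le\;C\,\|B_K^{-1}\|^{\,l}\,|\det B_K|^{1/2}\,|\hat q|_{m,\hat K}\;\lesssim\;\|B_K^{-1}\|^{\,l}\,\|B_K\|^{\,m}\,|q|_{m,K}\;\lesssim\;h_K^{m-l}\,|q|_{m,K},
\end{equation*}
which is the assertion. The constant is independent of $K$ because it is built from the fixed reference element $\hat K$ together with the uniform shape-regularity bounds on $B_K$. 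The only non-routine ingredient is the quotient-space argument in the reference-element step; everything else is bookkeeping with the affine map, and I would expect that to be the place where a careful write-up spends its effort (in particular, explicitly checking that $m\le l$ is used exactly there).
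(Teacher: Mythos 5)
Your proof is correct and is exactly the standard scaling argument; the paper offers no proof of its own here, simply citing Brenner--Scott, where this lemma is established by precisely this reference-element/quotient-seminorm argument. Nothing to add.
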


\begin{lem}\label{lem:trace_inverse}
(Trace-inverse Inequality \cite{FEM_Brenner}). Given an integer $l\ge 0$. For $K\in\T_h$ and $e$ be an edge of $K$, we have
\begin{equation*}
\begin{aligned}
\|q\|_{0,e}{\lesssim}~h_K^{-1/2}\| q\|_{0, K},\qquad \forall q\in P_l(K).
\end{aligned}
\end{equation*}
\end{lem}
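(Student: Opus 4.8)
The plan is to prove this by the classical scaling argument: pull everything back to a fixed reference element, use equivalence of norms on a finite-dimensional space, and then push forward while tracking the dependence on $h_K$. Let $\hat K$ denote the reference triangle with a fixed reference edge $\hat e \subset \pt\hat K$, and let $F_K:\hat K\to K$ be the affine bijection that maps $\hat K$ onto $K$ and $\hat e$ onto $e$. For $q\in P_l(K)$ set $\hat q = q\circ F_K \in P_l(\hat K)$; this is again a polynomial of degree at most $l$ because $F_K$ is affine.

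First I would establish the inequality on the reference configuration. The restriction (trace) map $P_l(\hat K)\ni \hat q\mapsto \hat q|_{\hat e}\in L^2(\hat e)$ is a well-defined linear map between finite-dimensional spaces, hence bounded, so there is a constant $C=C(l,\hat K)$ with $\|\hat q\|_{0,\hat e}\le C\,\|\hat q\|_{0,\hat K}$ for all $\hat q\in P_l(\hat K)$. This is the only place where the polynomial structure (finite dimensionality) is used.

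Next I would transfer the estimate back to $K$ by change of variables. Since $F_K$ is affine with constant Jacobian, one has the exact identities
\begin{equation*}
\|q\|_{0,e}^2 = \frac{|e|}{|\hat e|}\,\|\hat q\|_{0,\hat e}^2,\qquad
\|q\|_{0,K}^2 = \frac{|K|}{|\hat K|}\,\|\hat q\|_{0,\hat K}^2 .
\end{equation*}
Combining these with the reference inequality from the previous step gives $\|q\|_{0,e}^2 \lesssim \dfrac{|e|}{|K|}\,\|q\|_{0,K}^2$, with a hidden constant depending only on $l$ and the fixed reference element.

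Finally, the shape regularity and quasi-uniformity of $\T_h$ provide $|e|\lesssim h_K$ and $|K|\gtrsim h_K^2$, whence $|e|/|K|\lesssim h_K^{-1}$; taking square roots yields the claim. I do not expect any real obstacle here: the argument is entirely standard, and the only point deserving a line of care is checking that all hidden constants depend only on $l$ and the shape-regularity parameter of $\T_h$, so that the estimate is genuinely uniform over $K\in\T_h$.
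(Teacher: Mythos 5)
Your proof is correct and is precisely the standard scaling argument that underlies the cited result: the paper itself offers no proof of this lemma, simply quoting it from \cite{FEM_Brenner}, and the reference-element pullback, norm equivalence on $P_l(\hat K)$, and the shape-regularity bounds $|e|\lesssim h_K$, $|K|\gtrsim h_K^2$ are exactly the ingredients of the textbook proof. No gaps.
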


\begin{lem}\label{lem:BH}
(Bramble-Hilbert Lemma \cite{FEM_Brenner}). Given an integer $k\ge 0$. For $K\in\T_h$ and integers $0\le m\leq k+1$, one has
\begin{equation*}
\begin{aligned}
\inf_{\xi\in P_k(K)}\bigg(\sum_{i=0}^m h_K^i|v-\xi|_{i,K}\bigg)\lesssim h_K^m|v|_{m,K},\qquad \forall v\in H^m(K).
\end{aligned}
\end{equation*}
\end{lem}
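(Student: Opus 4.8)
The plan is to transport the estimate to a fixed reference triangle by an affine change of variables and invoke the generalized Poincar\'e (Deny--Lions) inequality there. Fix a reference triangle $\hat{K}$ of unit diameter, and for $K\in\T_h$ let $F\colon\hat{K}\to K$, $F(\hat{x})=B\hat{x}+b$, be the affine bijection mapping $\hat{K}$ onto $K$. Shape regularity and quasi-uniformity give $\|B\|\lesssim h_K$, $\|B^{-1}\|\lesssim h_K^{-1}$, and $|\det B|\eqsim h_K^2$. Writing $\hat{w}=w\circ F$, the chain rule together with these bounds yields the two-sided transfer of seminorms
\[
|w|_{i,K}\lesssim h_K^{-i}\,|\det B|^{1/2}\,|\hat{w}|_{i,\hat{K}},\qquad
|\hat{w}|_{i,\hat{K}}\lesssim h_K^{i}\,|\det B|^{-1/2}\,|w|_{i,K},
\]
for every $0\le i\le m$ and $w\in H^i(K)$.

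On the fixed domain $\hat{K}$, I would use the classical Deny--Lions lemma: since $|\hat{v}|_{m,\hat{K}}=0$ exactly when $\hat{v}\in P_{m-1}(\hat{K})$, a compactness (Rellich) argument yields the quotient-norm equivalence $\inf_{\hat{\xi}\in P_{m-1}(\hat{K})}\|\hat{v}-\hat{\xi}\|_{m,\hat{K}}\lesssim|\hat{v}|_{m,\hat{K}}$ for all $\hat{v}\in H^m(\hat{K})$. Because $m-1\le k$ we have $P_{m-1}(\hat{K})\subseteq P_k(\hat{K})$; choose $\hat{\xi}\in P_k(\hat{K})$ realizing this infimum up to a fixed constant and set $\xi=\hat{\xi}\circ F^{-1}\in P_k(K)$, so that $\widehat{v-\xi}=\hat{v}-\hat{\xi}$.

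Finally I would transfer back. For each $0\le i\le m$, apply the first inequality above with $w=v-\xi$, bound $|\hat{v}-\hat{\xi}|_{i,\hat{K}}\le\|\hat{v}-\hat{\xi}\|_{m,\hat{K}}$, invoke the Deny--Lions estimate, and then apply the second inequality with $w=v$:
\[
|v-\xi|_{i,K}\lesssim h_K^{-i}|\det B|^{1/2}\|\hat{v}-\hat{\xi}\|_{m,\hat{K}}
\lesssim h_K^{-i}|\det B|^{1/2}|\hat{v}|_{m,\hat{K}}
\lesssim h_K^{m-i}|v|_{m,K}.
\]
Multiplying by $h_K^i$ and summing over $i=0,\dots,m$ absorbs the constant $m+1$ into $\lesssim$ and gives the claim. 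The only genuinely nontrivial ingredient is the compactness argument behind the Deny--Lions inequality on the fixed triangle $\hat{K}$; everything else is bookkeeping with the affine scaling, where one must keep track of the $|\det B|^{1/2}$ factors so that they cancel.
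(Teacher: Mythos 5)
The paper offers no proof of this lemma; it is quoted verbatim as a standard result with a citation to Brenner--Scott, so there is nothing internal to compare against. Your argument is the classical textbook proof of exactly that cited result: affine scaling to a fixed reference triangle (with the correct $\|B\|\lesssim h_K$, $\|B^{-1}\|\lesssim h_K^{-1}$, $|\det B|\sim h_K^2$ bookkeeping, for which shape regularity alone suffices --- quasi-uniformity is not needed), the Deny--Lions quotient-norm equivalence on $\hat K$, and the observation that $P_{m-1}(\hat K)\subseteq P_k(\hat K)$ since $m\le k+1$. The $|\det B|^{1/2}$ factors cancel as you indicate, the degenerate case $m=0$ is handled trivially by $\xi=0$, and the constants depend only on the reference element, the shape-regularity parameter, and $k$; the proof is correct and is essentially the one in the cited reference.
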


\section{ Model problem and boundary value correction method}\label{sec3:model_problem}
In this section, we briefly describe the model problem and the boundary value correction method \cite{1972}.
Consider the Darcy's equation with the Neumann boundary condition
\begin{equation}
\begin{aligned}
\bl u + \nabla p =&~0 ,\quad &\text{in} \,\Omega ,&\\ \label{primal_problem}
\mathrm{div}\,\bl u=&~f ,\quad &\text{in} \, \Omega,\\
\bl u\cdot\bl\n=&~g_N ,\quad &\text{on} \, \Gamma,
\end{aligned}
\end{equation}
with $f\in L^2(\Omega)$ and $g_N\in H^{-1/2}(\Gamma)$.
Here, $\bl{\n}$ denotes the unit outward normal on $\Gamma$.
Problem (\ref{primal_problem}) admits a unique weak solution $(\bl u,p)\in H(\mathrm{div},\Omega)\times L_0^2(\Omega)$ as long as the following compatibility condition holds:
\begin{align}\label{g_comp_cond}
    \int_\Omega f\dif x=\int_\Gamma g_N\dif s.
\end{align}

The weak formulation of Problem (\ref{primal_problem}) can be written as: Find $(\bl u,p)\in H(\mathrm{div},\Omega)\times L_0^2(\Omega)$ satisfying $\bl u\cdot\bl\n=g_N$ on $\Gamma$, such that
\begin{equation}
\left\{
\begin{aligned}\label{Weak_P}
(\bl u,\bl v)_{\Omega}-(\mathrm{div}\,\bl v,p)_{\Omega}&=0, \quad &\forall&\bl v\in H_0(\mathrm{div},\Omega),\\
-(\mathrm{div}\,\bl u,q)_{\Omega}&=-(f,q)_{\Omega},  \quad &\forall &q\in L_0^2(\Omega),
\end{aligned}
\right.
\end{equation}
where $H_0(\mathrm{div},\Omega) = \{\bl v\in H(\mathrm{div},\Omega): \bl v\cdot \bl n = 0 \textrm{ on }\Gamma\}$.
The existence and uniqueness of a weak solution to the mixed problem (\ref{Weak_P}) can be found in \cite{FEM_Brenner}.

We use the Brezzi-Douglas-Marini (BDM) finite element method \cite{BDM_1985} on triangulation $\T_h$ to discretize Equation (\ref{Weak_P}). The discretization is defined on $\Omega_h$ rather than on $\Omega$. 
Note that in \eqref{Weak_P}, the Neumann boundary condition $\bl u\cdot\bl\n=g_N$ on $\Gamma$ becomes essential. This causes the main difficulty in its finite element discretization when $\Gamma$ is curved and hence not equal to $\Gamma_h$. Another difficulty is that the compatibility condition \eqref{g_comp_cond} holds only on $\Omega$ but not on $\Omega_h$. We shall take care of these issues one by one.
First, we use a boundary correction technique \cite{1972} to transform the boundary data from $\Gamma$ to $\Gamma_h$.

Assume that there exists a map $\rho_h:\Gamma_h\rightarrow\Gamma$ such that
\begin{equation}
\begin{aligned}\label{map}
 \rho_h(\bl x_h):=\bl x_h+\delta_h(\bl x_h)\bl \nu_h(\bl x_h),
 \end{aligned}
\end{equation}
as shown in Fig. \ref{Fig:Mh}, where $\bl\nu_h$ is the unit vector pointing from $\bl x_h \in \Gamma_h$ to $\rho_h(\bl x_h) \in \Gamma$, and $\delta_h(\bl x_h)=|\rho_h(\bl x_h)-\bl x_h|$.
For convenience, we denote $\bl x:=\rho_h(\bl x_h)$. Define $\wt{\bl\n}(\bl x_h) := \bl\n \circ  \rho_h(\bl x_h)$, i.e. the unit outward normal vector on $\bl x \in \Gamma$ is pulled to the corresponding $\bl x_h$. Denote by $\bl{\n}_h$ the unit outward normal vector on $\Gamma_h$. Since $\Gamma$ is piecewise $C^2$ continuous, there exists a map $\rho_h$ satisfying  \cite{Burman_boundary_value_correction_2018}
\begin{equation}
\begin{aligned}\label{deta}
\delta=\sup_{\bl x_h\in\Gamma_h}\delta_h(\bl x_h)\lesssim h^2,\qquad \|\wt{\bl\n}-\bl\n_h\|_{L^\infty(\Gamma_h)}\lesssim h.
\end{aligned}
\end{equation}

\begin{re}
In this paper, we do not specify the map $\rho_h$. We only require that the distance function $\delta_h(\bl x_h)$ satisfies (\ref{deta}).
For body-fitted meshes, this surely holds by simply setting $\bl \nu_h = \bl \n_h$. In fact, (\ref{deta}) can be true on certain unfitted meshes \cite{Burman_boundary_value_correction_2018}. The method and analysis in this paper extend to unfitted meshes that satisfy (\ref{deta}), with a little modification.
\end{re}
\begin{figure}[!htbp]
\centering 
{
\begin{tikzpicture}
\draw[red,very thick] (-1.71,0.8)--(1.69,0.8);
\draw[blue, thick] (1.8,0.6) arc (25:155:2);
\draw[gray, thick,->] (-0.3,0.8)--(-0.6,1.705);
\draw[blue, thick,->](-0.6,1.705)--(-0.85,2.5);
\draw (-0.1,0.7) node [below] {${\bm x_h}$};
\draw[red] (1,0.4) node [right] {$\Gamma_h$};
\draw[blue] (1.8,0.6) node[right] {$\Gamma$};
\draw (0.15,1.2) node {$\delta_h(\bm x_h)$};
\draw (-0.7,1.6) node[below] {$\bm x$};
\draw (-0.2,2.2) node[left] {$\bl \nu_h$};
\filldraw [black](-0.3,0.8) circle [radius=2pt];
\filldraw [black](-0.6,1.67) circle [radius=2pt];
\end{tikzpicture}
}\qquad\quad
 \caption{The distance $\delta_h(\bl x_h)$ and the unit vector $\bl{\nu_h}$ on $\Gamma_h$. 
}
\label{Fig:Mh}
\end{figure}
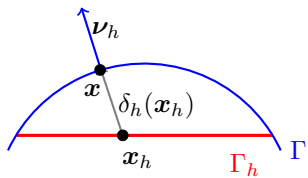

For a sufficiently smooth vector-valued function $\bl v$ defined between $\Gamma$ and $\Gamma_h$,
consider its $m$-th order Taylor expansion along  $\bl{\nu_h}$
\begin{equation*}
\begin{aligned}
\bl v(\rho_h(\bl x_h))&=\sum_{j=0}^{m}\frac{\delta_h^j(\bl x_h)}{j!}\partial_{\bl\nu_h}^j \bl v(\bl x_h)+R^m\bl v(\bl x_h)\\
&=:(T^m\bl v)(\bl x_h)+R^m\bl v(\bl x_h),
\end{aligned}
\end{equation*}
where $\partial_{\bl\nu_h}^j$ is the $j$-th order directional derivative and the remainder term satisfies
$$|R^m\bl v(\bl x_h)|=o(\delta^m).$$

For simplicity of notation, denote
\begin{equation}
\begin{aligned}\label{T1}
(T^m\bl v)(\bl x_h):=\bl v(\bl x_h)+(T_1^m\bl v)(\bl x_h) \quad
\textrm{where} \quad
(T_1^m\bl v)(\bl x_h):=\sum_{j=1}^{m}\frac{\delta_h^j(\bl x_h)}{j!}\partial_{\bl\nu_h}^j \bl v(\bl x_h).
\end{aligned}
\end{equation}
Both $T^m \bl v$ and $T_1^m \bl v$ are functions defined on $\Gamma_h$.
Denote by $\wt{\bl v}(\bl x_h):=\bl v\circ \rho_h(\bl x_h)$ the pullback of $\bl v$ from $\Gamma$ to $\Gamma_h$. Then clearly
\begin{equation}
\begin{aligned}\label{T-R}
\wt{\bl v} - T^m\bl v = R^m \bl v \qquad \textrm{on }\Gamma_h.
\end{aligned}
\end{equation}




Assume that (\ref{deta}) holds, we have the following two lemmas.

\begin{lem}\label{lem:Omega_he2K}
(cf. \cite{Yiliu_WG}). Given an integer $j\ge 0$. For $K\in \T_h^b$ and $q\in P_j(K)$, one has
\begin{equation*}
\begin{aligned}
\sum_{e\subset\pt K\cap\Gamma_h}\|q\|_{0,\Omega_h^e}^2\lesssim h_K\|q\|_{0,K}^2.
\end{aligned}
\end{equation*}
\end{lem}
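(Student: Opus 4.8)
The plan is to pass to the reference triangle, where the crescent $\Omega_h^e$ becomes a thin strip of width $\lesssim h_K$ attached to a fixed edge, and then to combine a one‑dimensional estimate for univariate polynomials with the equivalence of all norms on the finite‑dimensional space $P_j$. Since a triangle has at most three edges, the sum over $e\subset\pt K\cap\Gamma_h$ has a bounded number of terms, so it suffices to prove $\|q\|_{0,\Omega_h^e}^2\lesssim h_K\|q\|_{0,K}^2$ for a single boundary edge $e$.

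The first and main step is geometric: $\Omega_h^e$ is contained in a tubular neighborhood of $e$ of width $\lesssim h_K^2$. Indeed, $\tilde e$ and $e$ share the same endpoints, which lie on $\Gamma$, and by \eqref{deta} together with the $C^2$‑regularity of $\tilde e$ every point of $\tilde e$ lies within distance $\delta\lesssim h^2$ of $e$; since $\Omega_h^e$ is a bounded region with boundary $e\cup\tilde e$, it lies in the convex hull of $e\cup\tilde e$, hence in the convex (stadium‑shaped) set $\{\bl x:\mathrm{dist}(\bl x,e)\le\delta\}$. By quasi‑uniformity, $\delta\lesssim h^2\lesssim h_K^2$. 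Let $F_K:\hat K\to K$ be the affine map onto the reference triangle, so that $\|DF_K^{-1}\|\lesssim h_K^{-1}$ by shape regularity; then $\hat q:=q\circ F_K\in P_j$ and $\hat\Omega_h^e:=F_K^{-1}(\Omega_h^e)$ is contained in a strip $S_{h_K}:=\{\hat{\bl x}:\mathrm{dist}(\hat{\bl x},\hat e)\le\kappa h_K\}$ about the fixed edge $\hat e$, with $\kappa$ depending only on the shape‑regularity and quasi‑uniformity constants. For $h$ small enough, $S_{h_K}$ lies in a fixed bounded neighborhood $N$ of $\hat e$.

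Introduce Cartesian coordinates $(s,t)$ with $s$ along the line containing $\hat e$ and $t$ perpendicular to it; this change of variables has unit Jacobian, and $S_{h_K}\subset\{(s,t):s\in I,\ |t|\le\kappa h_K\}\subset N$ for a fixed interval $I$. For fixed $s$ the map $t\mapsto\hat q(s,t)$ is a univariate polynomial of degree $\le j$, so, using $\kappa h_K\le1$ and the equivalence of the $L^\infty$‑ and $L^2$‑norms on $P_j([-1,1])$ (constant depending only on $j$),
\begin{equation*}
\int_{-\kappa h_K}^{\kappa h_K}|\hat q(s,t)|^2\dif t\le 2\kappa h_K\max_{|t|\le 1}|\hat q(s,t)|^2\lesssim h_K\int_{-1}^{1}|\hat q(s,t)|^2\dif t .
\end{equation*}
Integrating in $s$ and enlarging the domain gives $\|\hat q\|_{0,\hat\Omega_h^e}^2\le\|\hat q\|_{0,S_{h_K}}^2\lesssim h_K\|\hat q\|_{0,N}^2$. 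Since $\hat q\mapsto\|\hat q\|_{0,N}$ and $\hat q\mapsto\|\hat q\|_{0,\hat K}$ are both norms on the finite‑dimensional space $P_j$, they are equivalent, so $\|\hat q\|_{0,N}^2\lesssim\|\hat q\|_{0,\hat K}^2$ with a constant depending only on $j$ and the fixed reference geometry. Scaling back via $\|q\|_{0,\Omega_h^e}^2=|\det DF_K|\,\|\hat q\|_{0,\hat\Omega_h^e}^2$ and $\|q\|_{0,K}^2=|\det DF_K|\,\|\hat q\|_{0,\hat K}^2$ yields $\|q\|_{0,\Omega_h^e}^2\lesssim h_K\|q\|_{0,K}^2$, and summing over the at most three boundary edges of $K$ finishes the proof. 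The step that needs the most care is the geometric containment of $\Omega_h^e$ in a strip of width $\lesssim h_K^2$: this is where \eqref{deta} and the $C^2$‑smoothness of $\tilde e$ enter, and it is precisely what produces the gain of one power of $h_K$ over a naive trace‑inverse bound; the remaining ingredients — affine scaling, the one‑dimensional polynomial inequality, and norm equivalence on $P_j$ — are routine.
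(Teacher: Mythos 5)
Your proof is correct. Note that the paper itself does not prove this lemma at all --- it is stated with ``cf.~\cite{Yiliu_WG}'' and the proof is delegated to that reference --- so there is nothing in the paper to compare against line by line; what you have supplied is a valid self-contained argument. The two ingredients you identify are exactly the right ones: the containment of $\Omega_h^e$ in a $\delta$-neighborhood of $e$ with $\delta\lesssim h^2$ (which, after pulling back by the affine map, becomes a strip of relative width $\lesssim h_K$ and is the source of the extra factor $h_K$), and norm equivalence on the finite-dimensional space $P_j$. The one-dimensional slicing step is sound since the restriction of $\hat q$ to a line is again a polynomial of degree at most $j$, and the convex-hull argument for $\Omega_h^e\subset\{\bl x:\mathrm{dist}(\bl x,e)\le\delta\}$ is legitimate because the region enclosed by the Jordan curve $e\cup\tilde e$ lies in the convex hull of that curve. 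For what it is worth, the same conclusion follows by a slightly shorter route that avoids the slicing: from $|\Omega_h^e|\lesssim h_e\,\delta\lesssim h_K^3$ and the inverse estimate $\|q\|_{L^\infty(\Omega_h^e)}\lesssim\|q\|_{L^\infty(\widehat N)}\lesssim h_K^{-1}\|q\|_{0,K}$ (again by norm equivalence on $P_j$ over a fixed reference neighborhood) one gets $\|q\|_{0,\Omega_h^e}^2\le|\Omega_h^e|\,\|q\|_{L^\infty(\Omega_h^e)}^2\lesssim h_K\|q\|_{0,K}^2$ directly; alternatively the bound follows from the paper's own inequalities \eqref{BK_Me2Me}, \eqref{BK_e2eh} combined with Lemma \ref{lem:trace_inverse}. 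Your version is somewhat longer but equally rigorous, and it correctly handles both orientations of the crescent (inside or outside $K$) since a polynomial extends naturally beyond $K$.
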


\begin{lem}\label{lem:B_K1994}
(cf. \cite{Bramble_King1994}). For each $e\in\E_h^b$ and $v\in H^1(\Omega \cup \Omega_h)$, one has 
\begin{align}
\|v\|_{0,e}&\lesssim \|v\|_{0,\wt e}+ h|v|_{1,\Omega_h^e}, \label{BK_eh2e}\\
\|v\|_{0,\wt e}&\lesssim\|v\|_{0,e}+h|v|_{1,\Omega_h^e},\label{BK_e2eh}\\
\|v\|_{0,\Omega_h^e}&\lesssim \delta^{1/2}\|v\|_{0,\wt e}+\delta\|\nabla v\|_{0,\Omega_h^e}.\label{BK_Me2Me}
\end{align}
Moreover, when $v|_{\pt \Omega}=0$, one has 
\begin{equation}
\begin{aligned}\label{BK_e2Omeg_e}
\|v\|_{0,e}\lesssim h\|v\|_{1,\Omega_h^e}.
\end{aligned}
\end{equation}
\end{lem}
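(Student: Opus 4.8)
The plan is to reduce all four estimates to the one-dimensional situation transverse to $e$, using the crescent region $\Omega_h^e$ as a thin curvilinear strip of width $O(\delta) = O(h^2)$. First I would set up coordinates adapted to the edge $e$: let $s$ run along $e$ and let $t$ measure signed distance in the direction $\bl\nu_h$, so that $e = \{t=0\}$ and $\tilde e$ is the graph $t = \phi(s)$ with $|\phi(s)| \le \delta$ and, because $\tilde e$ is $C^2$, $|\phi'(s)| \lesssim h$ (the tangent of $\tilde e$ deviates from that of $e$ by $O(h)$, consistent with the second bound in \eqref{deta}). In these coordinates the Jacobian of the change of variables is uniformly comparable to $1$, so $L^2$ norms on $e$, on $\tilde e$, and on slices of $\Omega_h^e$ are all equivalent to their flat-coordinate counterparts up to constants depending only on shape regularity.

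For \eqref{BK_eh2e} and \eqref{BK_e2eh}, the key identity is the fundamental theorem of calculus along the transverse fibers: for a.e. $s$,
\begin{equation*}
v(s,0) = v(s,\phi(s)) - \int_0^{\phi(s)} \partial_t v(s,\tau)\dif\tau,
\end{equation*}
valid for $v \in H^1$ by density. Squaring, using $(a+b)^2 \le 2a^2+2b^2$, integrating in $s$, and applying Cauchy–Schwarz to the integral term (whose length of integration is $\le\delta$) gives
\begin{equation*}
\|v\|_{0,e}^2 \lesssim \|v\|_{0,\tilde e}^2 + \delta\int_e\int_0^{\phi(s)}|\partial_t v(s,\tau)|^2\dif\tau\dif s \lesssim \|v\|_{0,\tilde e}^2 + \delta\,|v|_{1,\Omega_h^e}^2.
\end{equation*}
Since $\delta \lesssim h^2$, the second term is $\lesssim h^2|v|_{1,\Omega_h^e}^2$, which gives \eqref{BK_eh2e}; reversing the roles of $e$ and $\tilde e$ (integrating from $t=\phi(s)$ back to $t=0$, and accounting for the $O(h)$ distortion of arclength on $\tilde e$ via $\dif s_{\tilde e} = \sqrt{1+\phi'^2}\dif s$) gives \eqref{BK_e2eh}. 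For \eqref{BK_Me2Me}, I would write $v(s,t) = v(s,\phi(s)) - \int_t^{\phi(s)}\partial_\tau v(s,\tau)\dif\tau$ for $(s,t)\in\Omega_h^e$, square, and integrate over $\Omega_h^e$: the first term contributes $\int_e |v(s,\phi(s))|^2\,|\phi(s)|\dif s \lesssim \delta\|v\|_{0,\tilde e}^2$ (again using the bounded Jacobian linking $\tilde e$ to its projection on $e$), and the second contributes $\lesssim \delta\int_{\Omega_h^e}\int|\partial_\tau v|^2 \lesssim \delta^2\|\nabla v\|_{0,\Omega_h^e}^2$; taking square roots yields \eqref{BK_Me2Me}. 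Finally, \eqref{BK_e2Omeg_e} follows by combining: when $v|_{\partial\Omega}=0$ we have $v|_{\tilde e}=0$, so $\|v\|_{0,\tilde e}=0$ and \eqref{BK_eh2e} already gives $\|v\|_{0,e}\lesssim h|v|_{1,\Omega_h^e}\le h\|v\|_{1,\Omega_h^e}$.

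The main obstacle I anticipate is purely geometric bookkeeping rather than analytic depth: justifying uniformly (over all $e\in\E_h^b$ and all $h$) that the crescent $\Omega_h^e$ admits the transverse fibration described above with a Jacobian bounded above and below independently of $h$. This requires that $\bl\nu_h$ does not become tangential to $\Gamma$ — guaranteed by $\|\wt{\bl\n}-\bl\n_h\|_{L^\infty}\lesssim h$ together with the fact that on $e$ the normal $\bl\nu_h$ is transverse by construction — and that $\tilde e$ stays within a single $C^2$ piece of $\Gamma$, which is exactly the standing assumption in Section \ref{sec2:notation} that $\tilde e$ cannot cross a connection point of two $C^2$ pieces. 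Once the fibration and its bounded Jacobian are in hand, all four inequalities are routine applications of the fundamental theorem of calculus and Cauchy–Schwarz, with the powers of $h$ tracked through $\delta\lesssim h^2$. Since these estimates are cited from \cite{Bramble_King1994, Yiliu_WG}, I expect the paper to either reference them directly or give only the short fibration argument above.
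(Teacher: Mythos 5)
The paper offers no proof of this lemma at all --- it is stated with ``(cf.\ \cite{Bramble_King1994})'' and the estimates are simply imported from that reference. Your transverse-fibration argument (fundamental theorem of calculus along the fibers of width $O(\delta)$, Cauchy--Schwarz, and a Jacobian comparable to $1$ thanks to the $C^2$ regularity of $\tilde e$ and \eqref{deta}) is exactly the standard proof given in \cite{Bramble_King1994}, and it is correct as written, including the trivial degenerate case $e=\tilde e$, $\Omega_h^e=\emptyset$.
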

Lemmas \ref{lem:Omega_he2K}-\ref{lem:B_K1994}
can be easily extended to vector-valued functions.

\section{The finite element discretization }\label{sec4:discrete}

For a given integer $k\geq 1$, define the BDM finite element space on $\T_h$ by
 \begin{equation*}
\begin{aligned}
V_h&=\{\bl v_h\in H(\mathrm{div},\Omega_h): \bl v_h|_K\in \bl P_k(K),\, \forall \, K\in\T_h\},\\
Q_h&=\{q_h\in L^2(\Omega_h): q_h|_K\in P_{k-1}(K),\, \forall \, K\in\T_h\},\\
\end{aligned}
\end{equation*}
and denote $Q_{0h}:=Q_{h}\cap L_0^2(\Omega_h)$.

For $K\in\T_h$, define the local interpolation $I_K:\bl H^s(K)\rightarrow BDM_k(K), s>1/2$, by
\begin{equation}
\begin{cases}\label{Pi_k}
(I_K \bl v,\nabla\psi_{k-1})_K =(\bl v,\nabla\psi_{k-1})_K,\quad &\forall\,\psi_{k-1}\in  P_{k-1}(K),\\
\lal I_K \bl v\cdot\bl\n_h,\psi_k \ral_e =\lal  \bl v\cdot\bl\n_h,\psi_k \ral_e,\quad &\forall\,\psi_k\in P_k(e), \text{ on all edge }e\subset\pt K,\\
( I_K \bl v,\text{curl}(b_K\psi_{k-2}))_K=(\bl v,\text{curl}(b_K\psi_{k-2}))_K,\quad &\forall\,\psi_{k-2}\in  P_{k-2}(K),
\textrm{ when }k\ge 2,
\end{cases}
\end{equation}
where $b_K=\lambda_1\lambda_2\lambda_3$ is a bubble function, and $\lambda_i,\,i=1,2,3$, are the barycentric coordinates associated with $K$.
Denote the global interpolation by $I_h=\prod_{K\in\T_h}I_K : H^s(\Omega_h)\to V_h$.

Denote by $\Pi_{k-1}^{0,K}$ and $\Pi_k^{0,e}$ the $L^2$ orthogonal projections onto $P_{k-1}(K)$ and $P_k(e)$, respectively.
Define the global projection $\Pi_h=\left(\prod_{K\in\T_h}\Pi_{k-1}^{0,K}\right): L^2(\Omega_h)\to Q_h$.

\begin{lem}
(cf. \cite{BDM_1985}). The following commutative diagram holds:
\[
\xymatrix@R=2em@C=4em{
V \ar[r]^{\mathrm{div}} \ar[d]^{I_h}  & L^2(\Omega_h)\ar[d]^{\Pi_h}\\
V_h \ar[r]^{\mathrm{div}}  & Q_h
}
\]
where $V:=H(\mathrm{div},\Omega_h)\cap H^s(\Omega_h)$ with $s>1/2$. That is, for $\bl v\in V$ one has
\begin{equation}
\begin{aligned}\label{comm_diag_property}
\mathrm{div}\,I_K\bl v=\Pi_{k-1}^{0,K}\mathrm{div}\,\bl v
\qquad \forall K\in \T_h.
\end{aligned}
\end{equation}
\end{lem}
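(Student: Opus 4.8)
The plan is to establish the commutative diagram by proving the pointwise identity \eqref{comm_diag_property} on each element $K$, since the global statement follows by assembling over all $K\in\T_h$ and recalling that $\mathrm{div}\,V_h\subset Q_h$ and $\mathrm{div}\,\bl v\in L^2(\Omega_h)$ for $\bl v\in V$. On a fixed $K$, both $\mathrm{div}\,I_K\bl v$ and $\Pi_{k-1}^{0,K}\mathrm{div}\,\bl v$ lie in $P_{k-1}(K)$, so it suffices to show that they have the same $L^2(K)$ inner product against every test polynomial $\psi_{k-1}\in P_{k-1}(K)$. First I would fix such a $\psi_{k-1}$ and integrate by parts:
\begin{equation*}
(\mathrm{div}\,I_K\bl v,\psi_{k-1})_K = -(I_K\bl v,\nabla\psi_{k-1})_K + \lal I_K\bl v\cdot\bl\n_h,\psi_{k-1}\ral_{\pt K}.
\end{equation*}
Now the two defining properties of $I_K$ come into play: the first line of \eqref{Pi_k} replaces $(I_K\bl v,\nabla\psi_{k-1})_K$ by $(\bl v,\nabla\psi_{k-1})_K$ (note $\nabla\psi_{k-1}\in\bl P_{k-1}(K)$ is an admissible test gradient), and on each edge $e\subset\pt K$ the restriction $\psi_{k-1}|_e$ lies in $P_{k-1}(e)\subset P_k(e)$, so the second line of \eqref{Pi_k} gives $\lal I_K\bl v\cdot\bl\n_h,\psi_{k-1}\ral_e = \lal\bl v\cdot\bl\n_h,\psi_{k-1}\ral_e$. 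Substituting both and integrating by parts back yields
\begin{equation*}
(\mathrm{div}\,I_K\bl v,\psi_{k-1})_K = -(\bl v,\nabla\psi_{k-1})_K + \lal\bl v\cdot\bl\n_h,\psi_{k-1}\ral_{\pt K} = (\mathrm{div}\,\bl v,\psi_{k-1})_K = (\Pi_{k-1}^{0,K}\mathrm{div}\,\bl v,\psi_{k-1})_K,
\end{equation*}
where the last equality is just the definition of the $L^2$ projection $\Pi_{k-1}^{0,K}$. Since $\psi_{k-1}$ was arbitrary in $P_{k-1}(K)$ and both sides lie in that space, \eqref{comm_diag_property} follows, and summing over $K$ establishes the diagram.

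I expect no serious obstacle here; the only points requiring a word of care are (i) checking that the integration by parts is legitimate, which holds because $\bl v\in\bl H^s(K)$ with $s>1/2$ has a well-defined normal trace on $\pt K$ and $\mathrm{div}\,\bl v\in L^2(K)$, so $\bl v\in H(\mathrm{div},K)$ locally and Green's formula applies; (ii) verifying that the third (curl-type) condition in \eqref{Pi_k} is not needed for this particular identity — indeed it only affects the interior degrees of freedom and plays no role once we test against gradients and boundary traces; and (iii) noting that the interpolant $I_K\bl v$ is well-defined for $\bl v\in\bl H^s(K)$, $s>1/2$, which is exactly the regularity built into the space $V$. The argument is essentially the classical one for BDM elements from \cite{BDM_1985}, transcribed to the present mesh $\T_h$ on $\Omega_h$.
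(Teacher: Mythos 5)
Your proof is correct. The paper does not actually prove this lemma — it only cites \cite{BDM_1985} — and your argument (test $\mathrm{div}\,I_K\bl v$ against $\psi_{k-1}\in P_{k-1}(K)$, integrate by parts, invoke the gradient and edge degrees of freedom of $I_K$, and integrate back) is exactly the classical argument that the citation points to. Your side remarks are also accurate: the $H^s$, $s>1/2$, regularity is what makes the edge-wise normal traces square-integrable so the edge moments and Green's formula are legitimate, and the curl-type interior conditions indeed play no role since $\mathrm{curl}(b_K\psi_{k-2})$ is orthogonal to gradients and has vanishing normal trace.
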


\begin{lem}\label{lem:Ih_sta}
  For $K\in\T_h$, the interpolation $I_K$ satisfies, for $\bl v\in \bl H^1(K)$,
  \begin{align}
  \|I_K\bl v\|_{0,K}&\lesssim \|\bl v\|_{0,K}+h_K|\bl v|_{1,K},\label{Ih_sta_0norm}\\
  |I_K\bl v|_{1,K}&\lesssim |\bl v|_{1,K}.\label{Ih_sta_1norm}
  \end{align}
  \end{lem}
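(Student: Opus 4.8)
The plan is to prove both bounds by a standard scaling argument to the reference triangle $\hat K$, combined with the definition \eqref{Pi_k} of $I_K$ and a dimension count. First I would establish \eqref{Ih_sta_0norm}. Since $BDM_k(K)$ is finite-dimensional, on the reference element $\hat K$ all norms on $BDM_k(\hat K)$ are equivalent; in particular $\|\hat{\bl w}\|_{0,\hat K}$ is bounded by the finitely many degrees of freedom defining $\hat{\bl w}$, namely the moments against $\nabla\psi_{k-1}$ on $\hat K$, the edge moments $\langle \hat{\bl w}\cdot\hat{\bl n},\psi_k\rangle_{\hat e}$, and (for $k\ge 2$) the moments against $\mathrm{curl}(b_{\hat K}\psi_{k-2})$. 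For $\hat{\bl w}=I_{\hat K}\hat{\bl v}$ each of these equals the corresponding quantity with $\hat{\bl v}$ in place of $\hat{\bl w}$, by \eqref{Pi_k}. The volume moments are bounded by $\|\hat{\bl v}\|_{0,\hat K}$ via Cauchy--Schwarz; the edge moments are bounded by $\|\hat{\bl v}\|_{0,\hat e}$, which by the trace inequality (Lemma \ref{lem:TRACE} on $\hat K$) is controlled by $\|\hat{\bl v}\|_{0,\hat K}+|\hat{\bl v}|_{1,\hat K}$, hence by $\|\hat{\bl v}\|_{1,\hat K}$. Therefore $\|I_{\hat K}\hat{\bl v}\|_{0,\hat K}\lesssim \|\hat{\bl v}\|_{1,\hat K}$. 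Pulling this back to $K$ under the affine map and tracking the powers of $h_K$ from the standard scaling relations for the $L^2$ and $H^1$ (semi)norms on an affine-equivalent shape-regular element yields \eqref{Ih_sta_0norm}.

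For \eqref{Ih_sta_1norm} the subtlety is that $I_K$ is not the identity on the whole polynomial space $\bl P_k(K)$ componentwise, so I cannot simply invoke Bramble--Hilbert directly; instead I would argue as follows. On $\hat K$, the map $\hat{\bl v}\mapsto |I_{\hat K}\hat{\bl v}|_{1,\hat K}$ is a continuous seminorm on $\bl H^1(\hat K)$ (continuity follows from the inverse inequality on the finite-dimensional space $BDM_k(\hat K)$ together with the bound just proved: $|I_{\hat K}\hat{\bl v}|_{1,\hat K}\lesssim \|I_{\hat K}\hat{\bl v}\|_{0,\hat K}\lesssim\|\hat{\bl v}\|_{1,\hat K}$). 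It remains to kill the $\|\hat{\bl v}\|_{0,\hat K}$-part. The key observation is that $I_{\hat K}$ reproduces constants: if $\hat{\bl v}\equiv \bl c$ is a constant vector, then $\bl c\in BDM_k(\hat K)$ and all the defining moments in \eqref{Pi_k} of $I_{\hat K}\bl c$ agree with those of $\bl c$ (using that the edge DOFs see $\bl c\cdot\hat{\bl n}$ and the interior DOFs see $\bl c$), so $I_{\hat K}\bl c=\bl c$ and hence $|I_{\hat K}\bl c|_{1,\hat K}=0$. Thus for any constant $\bl c$, $|I_{\hat K}\hat{\bl v}|_{1,\hat K}=|I_{\hat K}(\hat{\bl v}-\bl c)|_{1,\hat K}\lesssim \|\hat{\bl v}-\bl c\|_{1,\hat K}$; minimizing over $\bl c$ and applying the Bramble--Hilbert/Deny--Lions lemma gives $|I_{\hat K}\hat{\bl v}|_{1,\hat K}\lesssim |\hat{\bl v}|_{1,\hat K}$. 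Scaling back to $K$ — where both sides carry the same power of $h_K$ because the $H^1$-seminorm scales the same way — produces \eqref{Ih_sta_1norm} with an $h_K$-independent constant, the shape-regularity hiding in the $\lesssim$.

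The main obstacle I anticipate is the bookkeeping in the affine scaling, specifically making sure the $H(\mathrm{div})$-conforming (contravariant Piola) transformation, rather than the naive componentwise pullback, is what is used for $BDM_k$ elements, and then verifying that the Piola map's Jacobian factors $\det(B_K)$ and $B_K$ combine with shape-regularity ($\|B_K\|\lesssim h_K$, $\|B_K^{-1}\|\lesssim h_K^{-1}$, $|\det B_K|\simeq h_K^2$) to give exactly the powers $h_K^0$ in \eqref{Ih_sta_1norm} and the $h_K^0,h_K^1$ split in \eqref{Ih_sta_0norm}. One must also confirm that the commuting-diagram interpolation $I_K$ defined by \eqref{Pi_k} is genuinely the Piola-transformed version of $I_{\hat K}$; this is standard for BDM but worth stating. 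Everything else — Cauchy--Schwarz, trace and inverse inequalities on $\hat K$, Bramble--Hilbert — is routine and already available from Lemmas \ref{lem:TRACE}--\ref{lem:BH}.
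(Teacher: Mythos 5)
Your proposal is correct and, for \eqref{Ih_sta_1norm}, follows essentially the same route as the paper: reduce to $\bl v-\bl c$ using that $I_K$ reproduces constants, pass from the $H^1$-seminorm to the $L^2$-norm by an inverse inequality, invoke the $L^2$-stability \eqref{Ih_sta_0norm}, and finish with Bramble--Hilbert/Deny--Lions; the paper does this directly on $K$ with explicit powers of $h_K$, while you work on $\hat K$ and scale back. The only other difference is that you prove \eqref{Ih_sta_0norm} via the standard reference-element norm-equivalence and Piola-scaling argument, whereas the paper simply cites it as well known.
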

  \begin{proof}
  Inequality (\ref{Ih_sta_0norm}) is well known \cite{MFEM}. 
  Inequality \eqref{Ih_sta_1norm} follows immediately from applying (\ref{Ih_sta_0norm}) to $\bl v-\frac{1}{|K|}\int_K \bl v\dif x$, the inverse inequality, and the Bramble-Hilbert lemma. More specifically, denoting 
 $\bar{\bl v}=\frac{1}{|K|}\int_K \bl v\dif x$, one has
  \begin{equation*}
  \begin{aligned}
  |I_K\bl v|_{1,K}&=|I_K\bl v-\bar{\bl v}|_{1,K}=|I_K(\bl v-\bar{\bl v})|_{1,K}\\
  &\lesssim h_K^{-1}\|I_K(\bl v-\bar{\bl v})\|_{0,K}\\
  &\lesssim h_K^{-1}(\|\bl v-\bar{\bl v}\|_{0,K}+h_K|\bl v|_{1,K})\\
  &\lesssim |\bl v|_{1,K}.
  \end{aligned}
  \end{equation*}
  \end{proof}

  By the lemmas \ref{lem:TRACE},\ref{lem:inverse},\ref{lem:BH}, it is not hard to see that
\begin{lem}\label{lem:Pi_Ih_app}
Given a positive integer $k$. For any nonnegative integers $j\leq i\leq k+1$ and any $K\in\T_h$, one has
\begin{align}
 h_K^j|v-\Pi_k^{0,K}v|_{j,K}&\lesssim h_K^{i-j}|v|_{i,K}, &&\forall v\in H^i(K),\label{pi_app}\\
 h_K^j|\bl v-I_K \bl v|_{j,K}&\lesssim h_K^{i-j}|\bl v|_{i,K}, &&\forall \bl v\in \bl H^i(K),\label{Ih_app}\\
 \|\mathrm{div}\,(\bl v-I_K\bl v)\|_{0,K}&\lesssim h_K^i|\mathrm{div}\,\bl v|_{i,K}, &&\forall \bl v\in H^i(\mathrm{div},K).\label{Ih_div_app}
 \end{align}
\end{lem}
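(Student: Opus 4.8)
The plan is to prove the three estimates in Lemma \ref{lem:Pi_Ih_app} by the standard scaling/Bramble-Hilbert argument, combined with the stability bounds from Lemma \ref{lem:Ih_sta} and the commuting diagram property \eqref{comm_diag_property}. All three inequalities rest on the same template: the operators $\Pi_k^{0,K}$ and $I_K$ reproduce polynomials of the relevant degree, so we subtract a well-chosen polynomial $\xi\in P_k(K)$ (or $\bl\xi\in\bl P_k(K)$), use the triangle inequality, control the interpolation error of the difference by its $L^2$- and $H^1$-norms, and finish by optimizing over $\xi$ via the Bramble-Hilbert Lemma \ref{lem:BH}.

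For \eqref{pi_app}, since $\Pi_k^{0,K}$ is the $L^2$ projection onto $P_k(K)$ it is $L^2$-stable and reproduces $P_k(K)$. First I would write $v-\Pi_k^{0,K}v=(v-\xi)-\Pi_k^{0,K}(v-\xi)$ for arbitrary $\xi\in P_k(K)$; then $|v-\Pi_k^{0,K}v|_{j,K}\le|v-\xi|_{j,K}+|\Pi_k^{0,K}(v-\xi)|_{j,K}$, and the second term is bounded by the inverse inequality (Lemma \ref{lem:inverse}) by $h_K^{-j}\|\Pi_k^{0,K}(v-\xi)\|_{0,K}\lesssim h_K^{-j}\|v-\xi\|_{0,K}$. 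Thus $h_K^j|v-\Pi_k^{0,K}v|_{j,K}\lesssim h_K^j|v-\xi|_{j,K}+\|v-\xi\|_{0,K}$, and taking the infimum over $\xi\in P_k(K)$ and invoking Lemma \ref{lem:BH} (applied with $m=i$, using that $i\le k+1$) gives $\lesssim h_K^i|v|_{i,K}$. For \eqref{Ih_app} the argument is identical componentwise, except that the $L^2$-stability of $\Pi_k^{0,K}$ is replaced by \eqref{Ih_sta_0norm}: writing $\bl v-I_K\bl v=(\bl v-\bl\xi)-I_K(\bl v-\bl\xi)$ with $\bl\xi\in\bl P_k(K)$ (here one uses that $I_K$ reproduces $\bl P_k(K)$, which follows from the degrees of freedom in \eqref{Pi_k}), one bounds $|I_K(\bl v-\bl\xi)|_{j,K}$: for $j=0$ directly by \eqref{Ih_sta_0norm}, and for $j\ge1$ by the inverse inequality followed by \eqref{Ih_sta_0norm} exactly as in the proof of Lemma \ref{lem:Ih_sta}. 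Summing, $h_K^j|\bl v-I_K\bl v|_{j,K}\lesssim \|\bl v-\bl\xi\|_{0,K}+h_K|\bl v-\bl\xi|_{1,K}+h_K^j|\bl v-\bl\xi|_{j,K}$, and Bramble-Hilbert finishes it. One should note the case $i=1$ is slightly special (only the $\|\cdot\|_{0,K}+h_K|\cdot|_{1,K}$ stability is available), but the estimate still closes since $j\le i=1$.

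For \eqref{Ih_div_app} I would exploit the commuting diagram \eqref{comm_diag_property}: $\mathrm{div}\,(\bl v-I_K\bl v)=\mathrm{div}\,\bl v-\Pi_{k-1}^{0,K}\mathrm{div}\,\bl v$, which is precisely the $L^2$-projection error of the scalar function $\mathrm{div}\,\bl v\in H^i(K)$ onto $P_{k-1}(K)$. Applying \eqref{pi_app} with $k$ replaced by $k-1$, with $j=0$, and with $i\le k$ (note $H^i(\mathrm{div},K)$ requires $\mathrm{div}\,\bl v\in H^i(K)$) yields $\|\mathrm{div}\,\bl v-\Pi_{k-1}^{0,K}\mathrm{div}\,\bl v\|_{0,K}\lesssim h_K^i|\mathrm{div}\,\bl v|_{i,K}$, as claimed. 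I do not expect a genuine obstacle here; the only point requiring a little care is the bookkeeping of admissible index ranges (ensuring $i\le k+1$ for the projection/interpolation estimates and $i\le k$ for the divergence estimate, consistent with the polynomial degrees involved) and the treatment of the lowest-regularity case $i=1$ in \eqref{Ih_app}, where one must rely solely on the $\bl H^1$-stability of $I_K$ already established in Lemma \ref{lem:Ih_sta}.
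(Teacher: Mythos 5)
Your proof is correct and is exactly the standard argument the paper leaves implicit (the lemma is stated only with the remark that it ``is not hard to see'' from the trace, inverse, and Bramble--Hilbert lemmas): polynomial reproduction together with the $L^2$-stability of $\Pi_k^{0,K}$ and the stability \eqref{Ih_sta_0norm} of $I_K$, the inverse inequality, the Bramble--Hilbert lemma, and the commuting-diagram identity \eqref{comm_diag_property} for the divergence estimate. Your bookkeeping caveat is also well taken: with the common range $j\le i\le k+1$ as literally written, \eqref{Ih_div_app} is only justified for $i\le k$, since the projection $\Pi_{k-1}^{0,K}$ cannot deliver order $h_K^{k+1}$.
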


Finally, we recall the extension property of Sobolev spaces:
\begin{lem}\label{lem:extension} (cf. \cite{book:Extension}).
Given a Lipschitz domain $\Omega$ in $\bbR^2$ and $s\in\bbR,\, s\geq 0$, there exists an extension operator $E: H^s(\Omega)\to H^s(\bbR^2)$ such that
  \begin{align*}
    E v|_{\Omega}= v,\quad \|E v\|_{s,\bbR^2}\lesssim \|v\|_{s,\Omega},\qquad \forall v\in H^s(\Omega).
  \end{align*}
  \end{lem}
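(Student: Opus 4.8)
The plan is to reduce the statement to a model graph domain by localization, and then construct a single Stein-type extension operator on that model using a regularized distance function together with an averaging kernel having vanishing moments. Since $\Omega$ is a bounded Lipschitz domain, its boundary $\Gamma$ is compact and can be covered by finitely many open sets $U_1,\dots,U_N$ on each of which, after a rotation of coordinates, $\Omega\cap U_i=\{(x,y)\in U_i: y>\gamma_i(x)\}$ for some Lipschitz function $\gamma_i$. Adding an interior patch $U_0$ with $\overline{U_0}\subset\Omega$, I would fix a smooth partition of unity $\{\phi_i\}_{i=0}^N$ subordinate to $\{U_i\}_{i=0}^N$. Since extension is linear, it suffices to extend each piece $\phi_i v$: the interior piece $\phi_0 v$ extends by zero, and each boundary piece is supported in $U_i$, so the problem reduces to constructing a bounded extension from the special Lipschitz domain $\Omega_i=\{y>\gamma_i(x)\}$ into $\bbR^2$, with $H^s$ norms comparable under the rotation and the multiplication by the fixed cutoff.

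On the model domain $\{(x,y):y>\gamma(x)\}$ with $\gamma$ Lipschitz, I would introduce a regularized distance $\Delta^*(x,y)$: a smooth function on the open complement that is comparable to $\mathrm{dist}((x,y),\Gamma)$ and whose derivatives of order $j$ are bounded by a constant times $\mathrm{dist}^{\,1-j}$. For a point $(x,y)$ below the graph I would set
\begin{equation*}
(Eu)(x,y)=\int_1^\infty u\big(x,\,y+2\lambda\,\Delta^*(x,y)\big)\,\psi(\lambda)\dif\lambda,
\end{equation*}
where $\psi$ is a fixed profile on $[1,\infty)$ satisfying $\int_1^\infty\psi\dif\lambda=1$, $\int_1^\infty\lambda^k\psi\dif\lambda=0$ for all $k\ge 1$, and decaying faster than any polynomial. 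For $\lambda\ge 1$ the reflected point $y+2\lambda\Delta^*$ lies above the graph, so the integral is well defined and $(Eu)|_{\Omega}=u$.

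Differentiating under the integral sign, every derivative $\pt^\alpha(Eu)$ with $|\alpha|\le m$ becomes a finite sum of integrals of derivatives of $u$ at the reflected point, weighted by products of derivatives of $\Delta^*$. The vanishing-moment conditions on $\psi$ cancel exactly the terms that would otherwise produce non-integrable singularities at $\Gamma$, while the rapid decay of $\psi$ and the bound $|\pt^j\Delta^*|\lesssim\mathrm{dist}^{\,1-j}$ place the remaining kernels in the range controlled by Stein's singular-integral estimates; this yields $\|Eu\|_{m,\bbR^2}\lesssim\|u\|_{m,\Omega}$ for every integer $m\ge 0$ with one and the same operator $E$. Real orders $s\ge 0$ then follow by interpolation between consecutive integers. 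Finally I would set $Ev:=\sum_{i=0}^N E_i(\phi_i v)$, where $E_0$ is extension by zero and each $E_i$ for $i\ge 1$ is the operator above pulled back through the rotation attached to $U_i$; this gives a bounded $E:H^s(\Omega)\to H^s(\bbR^2)$ with $Ev|_{\Omega}=v$.

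The crux is the uniform-in-$m$ bound on the graph domain: the same operator $E$ must be bounded on $H^m$ for all $m$ simultaneously, which forces the delicate interplay between the infinitely many vanishing moments of $\psi$ and the sharp derivative bounds on the regularized distance. A tempting shortcut---flattening the boundary via the bi-Lipschitz map $(x,y)\mapsto(x,y-\gamma(x))$ and then reflecting across the flat boundary---breaks down for $s>1$, because a merely Lipschitz change of variables preserves $H^s$ only for $0\le s\le1$; Stein's graph construction is designed precisely to avoid this loss of regularity.
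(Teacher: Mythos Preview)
Your proposal is correct and in fact sketches precisely the construction from the reference the paper cites: the paper does not prove this lemma at all but simply quotes it from Stein's book, so your outline of Stein's extension operator (localization to special Lipschitz domains, regularized distance, averaging kernel with vanishing moments, then interpolation) is exactly the argument underlying the cited result.
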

  
  In Lemma \ref{lem:extension}, the hidden constant in $\lesssim$ may depend on the shape of $\Omega$. Throughout the paper, we shall only use Lemma \ref{lem:extension} on $\Omega$ but not on $\Omega_h$, in order to ensure that the hidden constant does not depend on $h$.
  For brevity, we also denote the extension by $v^E=E v$.
  The extension and its notation can be extended to vector-valued functions.
 
\subsection{Discretization with boundary correction}

Let $(\bl u,\, p)$ be the solution to (\ref{primal_problem}) and 
$(\bl u^E,\, p^E)$ be their extension to $\bbR^2$.
Testing with $\bl v_h\in V_h$ in $\Omega_h$ gives
\begin{equation}
\begin{aligned}\label{eq:deduce_w1}
(\bl u^E+\nabla p^E,\bl v_h)_{\Omega_h} = (\bl u^E,\bl v_h)_{\Omega_h}-(p^E,\mathrm{div}\,\bl v_h)_{\Omega_h}+\sum_{e\in\E_h^b}\lal\bl v_h\cdot\bl\n_h,p^E\ral_e.
\end{aligned}
\end{equation}
We point out that although $\bl u^E+\nabla p^E=0$ in $\Omega$, the term $\bl u^E+\nabla p^E$ is not in general equal to zero in $\Omega_h\backslash \Omega$.
Next, using (\ref{T-R}), it holds
\begin{equation*}
\begin{aligned}\label{Equality:Rm}
(T^m\bl u^E+R^m \bl u^E)\cdot\wt{\bl\n}-\wt{g}_N=0,\quad\text{on}\,\Gamma_h,
\end{aligned}
\end{equation*}
where $\wt{g}_N(\bl x_h) = g_N(\rho(\bl x_h))$ is a pull-back of the Neumann boundary data $g_N$ from $\Gamma$ to $\Gamma_h$.
Equation (\ref{eq:deduce_w1}) can then be rewritten into
\begin{equation}
\begin{aligned}\label{eq:weak_1st}
(\bl u^E,\bl v_h)_{\Omega_h}&+(\mathrm{div}\, \bl u^E, \mathrm{div}\, \bl v_h)_{\Omega_h}+\sum_{e\in\E_h^b}\lal h_K^{-1}T^m\bl u^E\cdot\wt{\bl\n}, T^m\bl v_h\cdot\wt{\bl\n}\ral_e-(p^E,\mathrm{div}\,\bl v_h)_{\Omega_h}\\
&+\sum_{e\in\E_h^b}\lal\bl v_h\cdot\bl\n_h,p^E\ral_e=(\bl u^E+\nabla p^E,\bl v_h)_{\Omega_h}+(f^E+(\mathrm{div}\, \bl u^E-f^E), \mathrm{div}\, \bl v_h)_{\Omega_h}\\
&\qquad\qquad\qquad\qquad\qquad\,+\sum_{e\in\E_h^b}\lal h_K^{-1}(\wt{g}_N-R^m\bl u^E\cdot\wt{\bl\n}), T^m\bl v_h\cdot\wt{\bl\n}\ral_e.
\end{aligned}
\end{equation}
Again, note that $\mathrm{div}\, \bl u^E-f^E$ vanishes in $\Omega$ but not necessarily vanishes in $\Omega_h\backslash \Omega$.

Similarly, for any $q_h\in Q_h$, one has
\begin{equation}
\begin{aligned}\label{weak_2nd}
(\mathrm{div}\, \bl u^E,q_h)_{\Omega_h}=(f^E+(\mathrm{div}\, \bl u^E-f^E),q_h)_{\Omega_h}.
\end{aligned}
\end{equation}

We then introduce a discretization for (\ref{primal_problem}), using \eqref{eq:weak_1st}-\eqref{weak_2nd}.
First, for $\bl w,\,\bl v \in  H(\mathrm{div},\Omega\cup\Omega_h)$ that are smooth enough so that $T^m$ is well-defined, and for $q\in  L^2(\Omega\cup\Omega_h)$, define bilinear forms
and linear forms
\begin{equation}
\begin{aligned}\label{bilinear forms}
a_h(\bl w,\bl v):&=(\bl w,\bl v)_{\Omega_h}+(\mathrm{div}\, \bl w, \mathrm{div}\, \bl v)_{\Omega_h}+\sum_{e\in\E_h^b}\lal h_K^{-1}T^m\bl w\cdot\wt{\bl\n}, T^m\bl v\cdot\wt{\bl\n}\ral_e,\\
b_{h1}(\bl v,q):&=-(\mathrm{div}\, \bl v,q)_{\Omega_h}+\sum_{e\in\E_h^b} \lal\bl v\cdot\bl\n_h,q\ral_e,\\
b_{h0}(\bl v,q):&=-(\mathrm{div}\,\bl v,q)_{\Omega_h},\\
l_h(\bl v):&=(f^E, \mathrm{div}\, \bl v)_{\Omega_h}+\sum_{e\in\E_h^b}\lal h_K^{-1}\wt{g}_N, T^m\bl v\cdot\wt{\bl\n}\ral_e.
\end{aligned}
\end{equation}
Define the discrete formulation: {\it Find $(\bl u_h,p_h)\in V_h\times Q_{0h}$ such that}
\begin{equation}
\left\{
\begin{aligned}\label{Wh1}
a_h(\bl u_h,\bl v_h)+b_{h1}(\bl v_h,p_h)&=l_h(\bl v_h), \quad &\forall&\bl v_h\in V_h,\\
b_{h0}(\bl u_h,q_h)&=-(f^E,q_h)_{\Omega_h},  \quad &\forall &q_h\in Q_{0h}.
\end{aligned}
\right.
\end{equation}
In the rest of this section, we shall prove the well-posedness of \eqref{Wh1}, and discuss its ``compatibility" condition.

\subsection{Well-posedness of (\ref{Wh1})} \label{subsec:wp}
We first define a mesh-dependent norm in $V_h$ as follows
\begin{equation*}
\begin{aligned}
\|\bl v\|_{0,h}&=\left(\|\bl v\|_{H(\mathrm{div},\Omega_h)}^2+\sum_{e\in\E_h^b}\|h_K^{-1/2}T^m\bl v\cdot\wt{\bl\n}\|_{0,e}^2\right)^{1/2}.
\end{aligned}
\end{equation*}
 Note that the norm $\|\cdot\|_{0,h}$ is also meaningful for functions in $H^m(\Omega\cup\Omega_h)$ smooth enough so that $T^m$ is well-defined.

\begin{lem}\label{lem:T1_bounded}
Under the assumption (\ref{deta}), for $\bl v_h\in V_h$ we have
\begin{align}
  \|h_K^{-1/2} T_1^m\bl v_h\|_{0,e}&\leq  h|\bl v_h|_{1,K},\label{T0_bounded}\\
\|h_K^{-1/2} T_1^m\bl v_h\|_{0,e}&\lesssim \|\bl v_h\|_{0,K}. \label{T1_bound}
\end{align}
\end{lem}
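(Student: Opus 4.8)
The plan is to estimate the quantity $T_1^m \bl v_h = \sum_{j=1}^m \frac{\delta_h^j}{j!}\partial_{\bl\nu_h}^j \bl v_h$ term by term on each boundary edge $e \subset \partial K$, and then sum. The key ingredients are the bound $\delta = \sup \delta_h \lesssim h^2$ from \eqref{deta}, the trace-inverse inequality (Lemma~\ref{lem:trace_inverse}), and the inverse inequality (Lemma~\ref{lem:inverse}), all applicable since $\bl v_h|_K \in \bl P_k(K)$ and each directional derivative $\partial_{\bl\nu_h}^j \bl v_h$ is again a polynomial on $K$.

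First I would prove \eqref{T1_bound}. Fix $e \subset \partial K$ with $e \in \E_h^b$. Using $\delta_h \le \delta \lesssim h^2 \lesssim h_K^2$ (by quasi-uniformity, $h \simeq h_K$), write
\begin{equation*}
\|h_K^{-1/2} T_1^m \bl v_h\|_{0,e} \le h_K^{-1/2}\sum_{j=1}^m \frac{\delta^j}{j!}\|\partial_{\bl\nu_h}^j \bl v_h\|_{0,e}.
\end{equation*}
Apply the trace-inverse inequality to each term, $\|\partial_{\bl\nu_h}^j \bl v_h\|_{0,e} \lesssim h_K^{-1/2}\|\partial_{\bl\nu_h}^j \bl v_h\|_{0,K}$, and then the inverse inequality $|\bl v_h|_{j,K} \lesssim h_K^{-j}\|\bl v_h\|_{0,K}$, noting $|\partial_{\bl\nu_h}^j \bl v_h| \le |\bl v_h|_{j,K}$ pointwise since $\bl\nu_h$ is a unit vector. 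This yields, for each $j$, a contribution $\lesssim h_K^{-1/2}\cdot h_K^{2j}\cdot h_K^{-1/2}\cdot h_K^{-j}\|\bl v_h\|_{0,K} = h_K^{j-1}\|\bl v_h\|_{0,K}$. Since $j \ge 1$ and $h_K \lesssim 1$, every term is $\lesssim \|\bl v_h\|_{0,K}$, proving \eqref{T1_bound}.

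For \eqref{T0_bounded} the only change is that I want to land on $|\bl v_h|_{1,K}$ (with constant $1$, not just $\lesssim$, though in practice the paper's $\le$ presumably hides a dimensional constant or absorbs it — I would treat it as a $\lesssim$ bound unless the stated constant genuinely matters, in which case I'd keep the leading $j=1$ term explicit). Here, for the $j=1$ term I bound $\|\partial_{\bl\nu_h}\bl v_h\|_{0,e} \lesssim h_K^{-1/2}|\bl v_h|_{1,K}$ via trace-inverse, so $h_K^{-1/2}\delta\|\partial_{\bl\nu_h}\bl v_h\|_{0,e} \lesssim h_K^{-1/2}\cdot h_K^2 \cdot h_K^{-1/2}|\bl v_h|_{1,K} = h_K |\bl v_h|_{1,K} \simeq h |\bl v_h|_{1,K}$; for $j \ge 2$ I again use the inverse inequality to convert $|\bl v_h|_{j,K} \lesssim h_K^{1-j}|\bl v_h|_{1,K}$, and the powers of $h_K$ work out to give $h_K^{j-1}|\bl v_h|_{1,K}$, which for $j \ge 2$ is $\lesssim h|\bl v_h|_{1,K}$ as well. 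Summing the finitely many terms gives \eqref{T0_bounded}.

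I do not anticipate a serious obstacle here; this is a routine scaling argument. The one point requiring minor care is the bookkeeping of $h$ versus $h_K$: the estimates are naturally local in $h_K$, and one invokes quasi-uniformity ($h \simeq h_K$ uniformly) to replace $h_K$ by $h$ in the final statement. I would also note at the outset that $\partial_{\bl\nu_h}^j \bl v_h \in \bl P_{\max(k-j,0)}(K) \subset \bl P_k(K)$, so the polynomial inverse and trace-inverse inequalities genuinely apply, and that the sum over $j$ from $1$ to $m$ has a fixed finite number of terms so the constants depending on $m$ are harmless.
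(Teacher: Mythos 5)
Your proposal is correct and follows essentially the same route as the paper: bound $T_1^m\bl v_h$ term by term using $\delta_h\lesssim h^2$, the trace-inverse inequality, and the inverse inequality, with the sum over $j$ dominated by the $j=1$ contribution. The only cosmetic differences are that the paper derives \eqref{T1_bound} from \eqref{T0_bounded} via one more inverse inequality rather than proving it directly, and your exponent bookkeeping for the $j\ge 2$ terms in \eqref{T0_bounded} is off by one ($h_K^{j}$ rather than $h_K^{j-1}$), which does not affect the conclusion.
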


\begin{proof}
By the definition of $T_1^m\bl v_h$ in (\ref{T1}), and lemmas \ref{lem:TRACE}-\ref{lem:inverse}, we have
\begin{equation*}
\begin{aligned}
 \|h_K^{-1/2} T_1^m\bl v_h\|_{0,e}&
  =h_K^{-1/2}\left\|\sum_{j=1}^{m}\frac{\delta_h^j}{j!}\partial_{\bl\nu_h}^j  \bl v_h\right\|_{0,e}\\
  &\lesssim \sum_{j=1}^{m} h^{-1/2} \delta_h^{j}\|\partial_{\bl\nu_h}^j  \bl v_h\|_{0,e}\\
  &\lesssim \sum_{j=1}^{m} \left(\frac{ \delta_h}{h}\right)^{j}|\bl v_h|_{1,K}\\
& \lesssim \left(\frac{ \delta_h}{h}\right)|\bl v_h|_{1,K}\\
& \lesssim h|\bl v_h|_{1,K},
\end{aligned}
\end{equation*}
where we have used (\ref{deta}) in the last inequality. This completes the proof of \eqref{T0_bounded}.
Inequality (\ref{T1_bound}) follows immediately from the inverse inequality in Lemma \ref{lem:inverse}.
\end{proof}

\begin{lem}\label{lem:vn_e2h}
  Under the assumption (\ref{deta}), for $\bl v_h\in V_h$, we have
\begin{equation*}
\begin{aligned}
\sum_{e\in\E_h^b}h_K^{-1}\|\bl v_h\cdot\bl\n_h\|_{0,e}^2\lesssim \|\bl v_h\|_{0,h}^2.
\end{aligned}
\end{equation*}
\end{lem}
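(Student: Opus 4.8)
The plan is to bound $\bl v_h\cdot\bl\n_h$ on each boundary edge $e$ by three pieces, each of which is controlled either directly by the definition of $\|\cdot\|_{0,h}$ or by an earlier lemma. The starting point is the algebraic identity, valid on $\Gamma_h$ for any $\bl v_h\in V_h$,
\begin{equation*}
\bl v_h\cdot\bl\n_h = T^m\bl v_h\cdot\wt{\bl\n} - T_1^m\bl v_h\cdot\wt{\bl\n} + \bl v_h\cdot(\bl\n_h-\wt{\bl\n}),
\end{equation*}
which follows at once from $T^m\bl v_h = \bl v_h + T_1^m\bl v_h$ (see \eqref{T1}). Applying the triangle inequality on each $e\in\E_h^b$ and squaring gives
\begin{equation*}
h_K^{-1}\|\bl v_h\cdot\bl\n_h\|_{0,e}^2 \lesssim h_K^{-1}\|T^m\bl v_h\cdot\wt{\bl\n}\|_{0,e}^2 + h_K^{-1}\|T_1^m\bl v_h\cdot\wt{\bl\n}\|_{0,e}^2 + h_K^{-1}\|\bl v_h\cdot(\bl\n_h-\wt{\bl\n})\|_{0,e}^2 .
\end{equation*}

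I would then handle the three sums over $\E_h^b$ in turn. The first sum is at most $\|\bl v_h\|_{0,h}^2$ directly from the definition of the mesh-dependent norm. For the second sum, since $|\wt{\bl\n}|=1$, Lemma \ref{lem:T1_bounded} (inequality \eqref{T1_bound}) gives $h_K^{-1/2}\|T_1^m\bl v_h\cdot\wt{\bl\n}\|_{0,e}\lesssim\|\bl v_h\|_{0,K}$; summing over $e\in\E_h^b$ and using that each $K\in\T_h^b$ carries at most three boundary edges yields a bound by $\|\bl v_h\|_{0,\Omega_h}^2\le\|\bl v_h\|_{0,h}^2$. For the third sum I would invoke the second estimate in \eqref{deta}, namely $\|\wt{\bl\n}-\bl\n_h\|_{L^\infty(\Gamma_h)}\lesssim h$, together with the trace-inverse inequality (Lemma \ref{lem:trace_inverse}) applied componentwise to $\bl v_h|_K\in\bl P_k(K)$, to obtain
\begin{equation*}
h_K^{-1/2}\|\bl v_h\cdot(\bl\n_h-\wt{\bl\n})\|_{0,e} \lesssim h\, h_K^{-1/2}\|\bl v_h\|_{0,e} \lesssim h\, h_K^{-1}\|\bl v_h\|_{0,K} \lesssim \|\bl v_h\|_{0,K},
\end{equation*}
where the last step uses quasi-uniformity ($h_K\simeq h$). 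Summing over $\E_h^b$ again bounds this sum by $\|\bl v_h\|_{0,\Omega_h}^2\le\|\bl v_h\|_{0,h}^2$. Adding the three contributions completes the proof.

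The only genuinely new ingredient compared with the Dirichlet situation — and hence the step I expect to require the most care — is the third term, where the mismatch between the exact pulled-back normal $\wt{\bl\n}$ and the discrete normal $\bl\n_h$ appears; it is precisely controlled by the $O(h)$ bound in \eqref{deta}, which is exactly enough to absorb the extra $h_K^{-1/2}$ coming from the trace-inverse inequality. Everything else is a routine combination of the trace-inverse inequality, quasi-uniformity, and Lemma \ref{lem:T1_bounded}.
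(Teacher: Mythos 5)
Your proof is correct and follows essentially the same route as the paper: the identical decomposition $\bl v_h\cdot\bl\n_h = T^m\bl v_h\cdot\wt{\bl\n} - T_1^m\bl v_h\cdot\wt{\bl\n} + \bl v_h\cdot(\bl\n_h-\wt{\bl\n})$, with the first term absorbed into $\|\bl v_h\|_{0,h}$, the second controlled by \eqref{T1_bound}, and the third by the $O(h)$ normal estimate in \eqref{deta} combined with the trace-inverse inequality. Your write-up is, if anything, slightly more careful about the signs and the summation over edges than the paper's own version.
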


\begin{proof}
For any $e\in\E_h^b$, by the definition of $T^m$ and lemmas \ref{lem:T1_bounded}, \ref{lem:TRACE}, and \ref{lem:inverse}, one gets
\begin{equation*}
\begin{aligned}
\|\bl v_h\cdot\bl\n_h\|_{0,e}&=\|\bl v_h\cdot\wt{\bl\n}+\bl v_h\cdot(\wt{\bl\n}-\bl\n_h)\|_{0,e} \\
&=\|T^m\bl v_h\cdot\wt{\bl\n}-T_1^m\bl v_h\cdot\wt{\bl\n}+\bl v_h\cdot(\wt{\bl\n}-\bl\n_h)\|_{0,e} \\
&\leq \|T^m\bl v_h\cdot\wt{\bl\n}\|_{0,e}+\|T_1^m\bl v_h\cdot\wt{\bl\n}\|_{0,e}+\|\bl v_h\cdot(\wt{\bl\n}-\bl\n_h)\|_{0,e} \\
&\lesssim \|T^m\bl v_h\cdot\wt{\bl\n}\|_{0,e}+h_K^{1/2}\|\bl v_h\|_{0,K}.
\end{aligned}
\end{equation*}
Summing up over $e\in\E_h^b$, this completes the proof of the lemma.
\end{proof}

\begin{lem}\label{lem:bounded}
For $\bl u_h$, $\bl v_h\in V_h$ and $q_h\in Q_{0h}$, we have
\begin{equation*}
\begin{aligned}
a_h(\bl u_h,\bl v_h)&\leq \|\bl u_h\|_{0,h} \|\bl v_h\|_{0,h},\qquad &a_h(\bl v_h,\bl v_h)&=  \|\bl v_h\|_{0,h}^2,\\
b_{h1}(\bl v_h,q_h)& \lesssim \|\bl v_h\|_{0,h} \|q_h\|_{0,\Omega_h},\qquad &b_{h0}(\bl v_h,q_h)&\lesssim\|\bl v_h\|_{0,h} \|q_h\|_{0,\Omega_h}.\\
\end{aligned}
\end{equation*}
\end{lem}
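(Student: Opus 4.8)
The plan is to verify the four estimates in Lemma \ref{lem:bounded} one at a time, each by a short argument using the norm definitions and the preceding lemmas. The identity $a_h(\bl v_h,\bl v_h) = \|\bl v_h\|_{0,h}^2$ is immediate: expanding the definition of $a_h$ gives $\|\bl v_h\|_{0,\Omega_h}^2 + \|\mathrm{div}\,\bl v_h\|_{0,\Omega_h}^2 + \sum_{e\in\E_h^b}\|h_K^{-1/2}T^m\bl v_h\cdot\wt{\bl\n}\|_{0,e}^2$, which is exactly $\|\bl v_h\|_{H(\mathrm{div},\Omega_h)}^2 + \sum_{e\in\E_h^b}\|h_K^{-1/2}T^m\bl v_h\cdot\wt{\bl\n}\|_{0,e}^2 = \|\bl v_h\|_{0,h}^2$. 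The continuity of $a_h$ then follows by applying the Cauchy--Schwarz inequality termwise --- first in the $L^2(\Omega_h)$ inner products for the volume terms, then in $L^2(e)$ for each edge term, and finally the discrete Cauchy--Schwarz inequality over the sum of the three grouped contributions --- which produces $\|\bl u_h\|_{0,h}\|\bl v_h\|_{0,h}$ with constant exactly $1$.

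For $b_{h0}(\bl v_h,q_h) = -(\mathrm{div}\,\bl v_h,q_h)_{\Omega_h}$, a single Cauchy--Schwarz step gives $|b_{h0}(\bl v_h,q_h)| \le \|\mathrm{div}\,\bl v_h\|_{0,\Omega_h}\|q_h\|_{0,\Omega_h} \le \|\bl v_h\|_{0,h}\|q_h\|_{0,\Omega_h}$, since $\|\mathrm{div}\,\bl v_h\|_{0,\Omega_h} \le \|\bl v_h\|_{H(\mathrm{div},\Omega_h)} \le \|\bl v_h\|_{0,h}$. For $b_{h1}(\bl v_h,q_h) = -(\mathrm{div}\,\bl v_h,q_h)_{\Omega_h} + \sum_{e\in\E_h^b}\lal\bl v_h\cdot\bl\n_h,q_h\ral_e$, I would bound the first term exactly as for $b_{h0}$, and for the boundary sum apply Cauchy--Schwarz on each edge and then over the sum:
\begin{equation*}
\sum_{e\in\E_h^b}\lal\bl v_h\cdot\bl\n_h,q_h\ral_e \le \Big(\sum_{e\in\E_h^b}h_K^{-1}\|\bl v_h\cdot\bl\n_h\|_{0,e}^2\Big)^{1/2}\Big(\sum_{e\in\E_h^b}h_K\|q_h\|_{0,e}^2\Big)^{1/2}.
\end{equation*}
The first factor is $\lesssim \|\bl v_h\|_{0,h}$ by Lemma \ref{lem:vn_e2h}. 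For the second factor, the trace-inverse inequality (Lemma \ref{lem:trace_inverse}) applied to $q_h|_K \in P_{k-1}(K)$ gives $\|q_h\|_{0,e}^2 \lesssim h_K^{-1}\|q_h\|_{0,K}^2$, so $\sum_{e\in\E_h^b}h_K\|q_h\|_{0,e}^2 \lesssim \sum_{K\in\T_h^b}\|q_h\|_{0,K}^2 \le \|q_h\|_{0,\Omega_h}^2$, where I use shape-regularity so that each $K$ is counted a bounded number of times. Combining, $b_{h1}(\bl v_h,q_h) \lesssim \|\bl v_h\|_{0,h}\|q_h\|_{0,\Omega_h}$.

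There is no real obstacle here; the only point requiring a touch of care is the boundary term of $b_{h1}$, where one must recognize that the natural pairing is $h_K^{-1/2}\|\bl v_h\cdot\bl\n_h\|_{0,e}$ against $h_K^{1/2}\|q_h\|_{0,e}$, so that Lemma \ref{lem:vn_e2h} can absorb the first and the trace-inverse inequality the second. Note that $q_h$ being discontinuous across mesh edges is harmless, since each $e\in\E_h^b$ is a boundary edge belonging to a unique element $K\in\T_h^b$. The whole lemma is essentially bookkeeping with Cauchy--Schwarz plus one invocation each of Lemma \ref{lem:vn_e2h} and Lemma \ref{lem:trace_inverse}.
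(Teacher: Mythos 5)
Your proposal is correct and follows essentially the same route as the paper: the identity for $a_h(\bl v_h,\bl v_h)$ and the bounds for $a_h$ and $b_{h0}$ by Cauchy--Schwarz, and for $b_{h1}$ the same splitting of the edge pairing into $h_K^{-1/2}\|\bl v_h\cdot\bl\n_h\|_{0,e}$ against $h_K^{1/2}\|q_h\|_{0,e}$, absorbed by Lemma \ref{lem:vn_e2h} and the trace-inverse inequality respectively. Your write-up is slightly more explicit than the paper's (e.g.\ the discrete Cauchy--Schwarz over the edge sum and the remark that each boundary edge belongs to a unique element), but there is no substantive difference.
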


\begin{proof}
From the definition, it is obvious that $a_h(\bl v_h,\bl v_h)=  \|\bl v_h\|_{0,h}^2$. The upper bounds of $a_h$ and $b_{h0}$ follow immediately from the Schwarz inequality.

Using the Schwarz inequality and lemmas \ref{lem:vn_e2h}, \ref{lem:trace_inverse}, we obtain
\begin{equation*}
\begin{aligned}
|b_{h1}(\bl v_h,q_h)|&\leq \|\mathrm{div}\,\bl v_h\|_{0,\Omega_h}\|q_h\|_{0,\Omega_h}+\sum_{e\in\E_h^b} h_K^{-1/2}\|\bl v_h\cdot\bl\n_h\|_{0,e}h_K^{1/2}\|q_h\|_{0,e}\\
&\lesssim \|\mathrm{div}\,\bl v_h\|_{0,\Omega_h}\|q_h\|_{0,\Omega_h}+\|\bl v_h\|_{0,h}\|q_h\|_{0,\Omega_h}\\
&\lesssim \|\bl v_h\|_{0,h}\|q_h\|_{0,\Omega_h}.
\end{aligned}
\end{equation*}
\end{proof}

Lemma \ref{lem:bounded} gives the boundedness of $a_h(\cdot,\cdot)$, $b_{h1}(\cdot,\cdot)$, $b_{h0}(\cdot,\cdot)$, and the coercivity of $a_h(\cdot,\cdot)$.
We then prove the $\inf$-$\sup$ condition for $b_{h1}(\cdot,\cdot)$ and $b_{h0}(\cdot,\cdot)$.
Note that a function $q_h\in Q_{0h}$ is mean-value free on $\Omega_h$ but not on $\Omega$. This poses additional difficulty in the analysis.
To this end, we first introduce the following notation to distinguish between two types of crescent-shaped regions $\Omega_h^{e}$, for each $e\in \E_h^b$, as shown in Fig. \ref{Fig:yueya_domain}, 
\[\label{Ome_he}
\Omega_h^{e}=
\begin{cases}
\Omega_h^{e,+}, \quad\text{in} \,\Omega\backslash \Omega_h,\\
\Omega_h^{e,-}, \quad\text{in} \,\Omega_h\backslash \Omega.
\end{cases}\qquad
\sigma=
\begin{cases}
  1, \quad&\text{in}\, \Omega_h^{e,+}, \\
  -1, \quad&\text{in} \,\Omega_h^{e,-}.
  \end{cases}
\]
\begin{figure}[!htbp]
  \centering
  {
  \includegraphics[height=4cm,width=6cm]{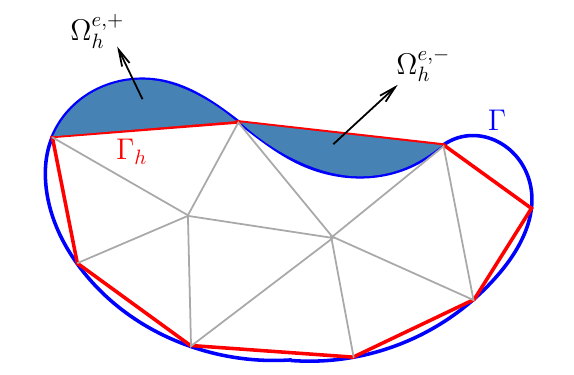}
  }
  \caption{The true boundary $\Gamma$ (blue curve), the approximated boundary $\Gamma_h$ (red lines) and two typical regions $\Omega_h^{e,+},\Omega_h^{e,-}$ bounded by $\Gamma$ and $\Gamma_h$.}
  \label{Fig:yueya_domain}
  \end{figure}

\begin{lem}\label{lem:LBB}
  (Inf-Sup condition). When $h$ is sufficiently small, there exist positive constants $\beta_0$ and $\beta_1$ independent of $h$ such that, for $q_h\in Q_{0h}$, 
\begin{equation*}
\begin{aligned}\label{LBB12}
\sup_{\bl v_h\in V_h}\frac{b_{h1}(\bl v_h,q_h)}{\|\bl v_h\|_{0,h}}\geq \beta_1 \|q_h\|_{0,\Omega_h},\qquad
\sup_{\bl v_h\in V_h}\frac{b_{h0}(\bl v_h,q_h)}{\|\bl v_h\|_{0,h}}\geq \beta_0 \|q_h\|_{0,\Omega_h}.
\end{aligned}
\end{equation*}
\end{lem}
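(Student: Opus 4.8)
The plan is to reduce the discrete inf-sup conditions to the known continuous inf-sup condition for the mixed Darcy problem on $\Omega$ (which underlies the well-posedness of \eqref{Weak_P}), transported to $\Omega_h$ via the extension operator of Lemma \ref{lem:extension}. Fix $q_h\in Q_{0h}$. First, one needs a function in $H(\mathrm{div})$ whose divergence controls $q_h$ on $\Omega_h$. A natural choice is to take $q_h^E$, the Sobolev extension of $q_h$ to $\bbR^2$, restrict to $\Omega$, subtract its mean over $\Omega$ so it lies in $L_0^2(\Omega)$, and then invoke the continuous inf-sup / surjectivity of $\mathrm{div}:H_0(\mathrm{div},\Omega)\to L_0^2(\Omega)$ to obtain $\bl w\in H_0(\mathrm{div},\Omega)$ with $\mathrm{div}\,\bl w = q_h - c$ on $\Omega$ (where $c$ is the mean of $q_h^E$ over $\Omega$) and $\|\bl w\|_{H(\mathrm{div},\Omega)}\lesssim \|q_h\|_{0,\Omega_h}$; here one uses $|c|\lesssim\|q_h^E\|_{0,\Omega}\lesssim\|q_h\|_{0,\Omega_h}$ together with $|\Omega|$ being fixed. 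Then extend $\bl w$ by the Sobolev extension to $\bl w^E\in\bl H(\mathrm{div},\bbR^2)$ — actually, since $\bl w\cdot\bl n=0$ on $\Gamma$, the extension by zero outside $\Omega$ already lies in $H(\mathrm{div},\bbR^2)$, so $\mathrm{div}\,\bl w^E$ is supported in $\overline\Omega$ and equals $q_h-c$ there.

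Next I would apply the BDM interpolation $I_h$ to $\bl w^E$ (valid since $\bl w^E$ can be taken in $\bl H^1$ after a mollification, or one works with the extension in a slightly higher-regularity fashion; alternatively use the regularized Fortin-type interpolant). Set $\bl v_h := I_h \bl w^E \in V_h$. By the commuting diagram \eqref{comm_diag_property}, $\mathrm{div}\,\bl v_h = \Pi_h \mathrm{div}\,\bl w^E$ on each $K\in\T_h$. On interior elements and on the part of $\Omega_h$ inside $\Omega$ this equals $\Pi_h(q_h-c) = q_h-c$; on the sliver regions $\Omega_h^{e,-}\subset\Omega_h\setminus\Omega$ the divergence of $\bl w^E$ is zero, so $\mathrm{div}\,\bl v_h$ is the $L^2$-projection onto $P_{k-1}(K)$ of a function that vanishes on $K\cap(\Omega_h\setminus\Omega)$ and equals $q_h-c$ on $K\cap\Omega$ — so $\mathrm{div}\,\bl v_h$ differs from $q_h-c$ by an error supported near the boundary, controlled using Lemma \ref{lem:Omega_he2K} and Lemma \ref{lem:B_K1994}. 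Concretely, $-(\mathrm{div}\,\bl v_h,q_h)_{\Omega_h} = -(q_h-c,q_h)_{\Omega_h} + (\text{boundary sliver terms}) = \|q_h\|_{0,\Omega_h}^2 - c\,(1,q_h)_{\Omega_h} + (\text{sliver})$. Since $q_h\in Q_{0h}$ is mean-value free on $\Omega_h$, the term $c\,(1,q_h)_{\Omega_h}=0$ — this is the crucial cancellation that makes the choice of $L_0^2(\Omega_h)$, rather than $L_0^2(\Omega)$, work. The sliver term is $\sum_e \int_{\Omega_h^{e,-}} (q_h-c - \mathrm{div}\,\bl w^E)\,q_h$ or rather the projection error on those elements; using that on such an element the un-projected integrand has $L^2$ norm over $\Omega_h^e$ bounded by $\|q_h-c\|_{0,\Omega_h^e}$, and Lemma \ref{lem:Omega_he2K} gives $\sum_e\|q_h-c\|_{0,\Omega_h^e}^2\lesssim h\sum_K\|q_h-c\|_{0,K}^2 \lesssim h\|q_h\|_{0,\Omega_h}^2 + h\,|c|^2|\Omega| \lesssim h\|q_h\|_{0,\Omega_h}^2$, so this contributes $\lesssim h^{1/2}\|q_h\|_{0,\Omega_h}^2$, absorbable for $h$ small. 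For $b_{h1}$ one additionally has the edge terms $\sum_e\langle\bl v_h\cdot\bl n_h,q_h\rangle_e$; here I would bound $\|\bl v_h\cdot\bl n_h\|_{0,e}$ via $\|(\bl v_h-\bl w^E)\cdot\bl n_h\|_{0,e} + \|\bl w^E\cdot\bl n_h\|_{0,e}$, the first by the trace and interpolation estimates \eqref{Ih_app}, the second by noting $\bl w^E\cdot\bl n=0$ on $\Gamma=\wt e$ and using \eqref{BK_eh2e}, so that $\sum_e h_K^{-1}\|\bl v_h\cdot\bl n_h\|_{0,e}^2 \lesssim \|\bl w^E\|_{H(\mathrm{div},\cdot)}^2 + (\text{h.o.t.})$; combined with Lemma \ref{lem:trace_inverse}-type bounds this edge contribution is again $\lesssim\|q_h\|_{0,\Omega_h}$ times a power of $h$, i.e. lower order.

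Finally I would bound $\|\bl v_h\|_{0,h}$ from above by $\|q_h\|_{0,\Omega_h}$: the $\bl L^2$ and $\mathrm{div}$ parts come from Lemma \ref{lem:Ih_sta}/\eqref{Ih_div_app} and $\|\bl w^E\|_{H(\mathrm{div})}\lesssim\|q_h\|_{0,\Omega_h}$, while the boundary part $\sum_e\|h_K^{-1/2}T^m\bl v_h\cdot\wt{\bl n}\|_{0,e}^2$ is handled by splitting $T^m\bl v_h\cdot\wt{\bl n} = \bl v_h\cdot\wt{\bl n} + T_1^m\bl v_h\cdot\wt{\bl n}$, controlling $T_1^m$ by Lemma \ref{lem:T1_bounded}, and controlling $\bl v_h\cdot\wt{\bl n}$ by $(\bl v_h-\bl w^E)\cdot\wt{\bl n} + \bl w^E\cdot\wt{\bl n}$ exactly as above, using $\bl w^E\cdot\bl n=0$ on $\Gamma$ and \eqref{BK_eh2e}. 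Putting it together yields $b_{h1}(\bl v_h,q_h) \ge (1-C h^{1/2})\|q_h\|_{0,\Omega_h}^2$ and $\|\bl v_h\|_{0,h}\lesssim\|q_h\|_{0,\Omega_h}$, hence $\sup_{\bl v_h}b_{h1}(\bl v_h,q_h)/\|\bl v_h\|_{0,h}\ge\beta_1\|q_h\|_{0,\Omega_h}$ for $h$ small; the $b_{h0}$ case is identical but simpler, with no edge term. The main obstacle I anticipate is the mismatch between "mean-free on $\Omega_h$" (the space $Q_{0h}$) and "mean-free on $\Omega$" (required to invert $\mathrm{div}$ on $\Omega$): the resolution is precisely the cancellation $c\,(1,q_h)_{\Omega_h}=0$ noted above, together with the sliver-region estimates from Lemma \ref{lem:Omega_he2K} showing that the difference in domains only contributes an $O(h^{1/2})$ perturbation; a secondary technical point is ensuring $I_h$ is applicable to $\bl w^E$, which one addresses by taking the extension in $\bl H^s$ with $s>1/2$ (or $\bl H^1$ after regularization) so that \eqref{Pi_k} makes sense.
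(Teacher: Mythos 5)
Your overall strategy is the same as the paper's: build a divergence preimage of (a mean-corrected) $q_h$ on $\Omega$, extend by zero, apply $I_h$, use the commuting diagram \eqref{comm_diag_property}, exploit the cancellation $c\,(1,q_h)_{\Omega_h}=0$ coming from $q_h\in L_0^2(\Omega_h)$, and absorb the sliver and edge contributions as $O(h^{1/2})$ perturbations via Lemma \ref{lem:Omega_he2K} and Lemma \ref{lem:B_K1994}. The key cancellation you identify is exactly the paper's identity $(\bar q_h,q_h)_{\Omega_h}=\|q_h\|_{0,\Omega_h}^2$ in \eqref{bar_qh}. However, there is one genuine gap: you take the divergence preimage $\bl w$ in $H_0(\mathrm{div},\Omega)$, and this choice breaks several later steps. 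The zero extension of an $H_0(\mathrm{div},\Omega)$ field is in $H(\mathrm{div},\bbR^2)$ but not in $\bl H^s$ for $s>1/2$ (the tangential component jumps across $\Gamma$), so the BDM interpolation \eqref{Pi_k} and the commuting diagram are not applicable; mollifying $\bl w^E$ would restore smoothness but destroys the exact identity $\mathrm{div}\,\bl w^E=q_h-c$ on $\Omega$ and the zero normal trace, which are the whole point of the construction. The same regularity defect undermines your edge estimates: applying \eqref{BK_eh2e} to $\bl w^E\cdot\bl\n_h$ requires $\bl w^E\in \bl H^1(\Omega\cup\Omega_h)$, and on $\wt e$ one only knows $\bl w\cdot\bl\n=0$ for the normal $\bl\n$ of $\Gamma$, so $\bl w^E\cdot\bl\n_h|_{\wt e}$ involves the tangential trace of $\bl w$, which does not exist for a general $H(\mathrm{div})$ field. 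Finally, the bound $\|I_h\bl w^E\|_{0,h}\lesssim\|q_h\|_{0,\Omega_h}$ needs $\|\bl w\|_{1,\Omega}$ on the right-hand side of the stability estimate \eqref{Ih_sta_0norm}, not just $\|\bl w\|_{H(\mathrm{div},\Omega)}$.

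The repair is exactly what the paper does: invoke the stronger, equally standard result that $\mathrm{div}:\bl H_0^1(\Omega)\to L_0^2(\Omega)$ admits a bounded right inverse, so that $\bl v\in\bl H_0^1(\Omega)$ with $\mathrm{div}\,\bl v=-\bar q_h$ and $\|\bl v\|_{1,\Omega}\lesssim\|q_h\|_{0,\Omega_h}$. Then the zero extension lies in $\bl H^1$ of a neighborhood of $\overline{\Omega\cup\Omega_h}$, $I_h$ and \eqref{comm_diag_property} apply directly, the full trace of $\bl v$ vanishes on $\Gamma$ and on $\Omega_h\backslash\Omega$ so the edge terms are handled componentwise by \eqref{BK_e2Omeg_e}, and $\|I_h\bl v\|_{0,h}\lesssim\|\bl v\|_{1,\Omega}$ follows from Lemma \ref{lem:Ih_sta}. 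A further minor point: with your sign convention $\mathrm{div}\,\bl w=q_h-c$ you get $-(\mathrm{div}\,\bl w,q_h)_{\Omega_h}=-\|q_h\|_{0,\Omega_h}^2$ plus perturbations, so you should set $\mathrm{div}\,\bl w=-(q_h-c)$ to obtain the lower bound with the correct sign. With these corrections your argument coincides with the paper's proof.
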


\begin{proof}
Each function $q_h\in Q_{0h}\subset L_0^2(\Omega_h)$ can be naturally extended
to $\Omega\cup \Omega_h$, by filling the gap region $\Omega\backslash \Omega_h$ with the same polynomial on neighboring mesh
element. For simplicity, we still denote the extension by $q_h$.
Define $\bar q_h=q_h-\frac{1}{|\Omega|}\int_{\Omega}q_h\dif x \in L_0^2(\Omega)$. 
Since $q_h$ is mean value free in $\Omega_h$, we have
\begin{equation}
  \begin{aligned}\label{bar_qh2Me}
  \bar q_h=q_h-\frac{1}{|\Omega|}\bigg(\int_{\Omega}q_h\dif x-\int_{\Omega_h}q_h\dif x\bigg)=q_h+(-1)^{\sigma}\frac{1}{|\Omega|}\sum_{e\in\E_h^b}\int_{\Omega_h^e}q_h\dif x.
  \end{aligned}
  \end{equation}
Similar to \cite{Yiliu_WG}, one gets
\begin{equation}
\begin{aligned}\label{bar_qh}
\|\bar q_h\|_{0,\Omega}\lesssim\|q_h\|_{0,\Omega_h},\qquad (\bar q_h,q_h)_{\Omega_h}=\|q_h\|_{0,\Omega_h}^2.\\
\end{aligned}
\end{equation}
It is well-known (see, e.g., \cite{FEM_Brenner}) that there exists a $\bl v\in \bl H_0^1(\Omega)$ satisfying $\mathrm{div}\,\bl v=-\bar q_h$ on $\Omega$
and $\|\bl v\|_{1,\Omega}\lesssim \|\bar q_h\|_{0,\Omega}\lesssim \|q_h\|_{0,\Omega_h}$,
where the hidden constant in $\lesssim$ may depend on the shape of $\Omega$ but not on $h$.
Extend $\bl v$ to $\bbR^2$ by setting its value to be $0$ outside $\Omega$. The extension is still denoted by $\bl v$, which shall not be mistaken for the extension $E\bl v = \bl v^E$ defined in Lemma \ref{lem:extension}. 
It is clear that, after the zero-extension, one has $\bl v\in H_0^1(\Omega\cup\Omega)$ and hence the interpolation $I_h \bl v \in V_h$ is well-defined.
By the commutative property (\ref{comm_diag_property}), the definition of $I_h$ and (\ref{bar_qh}), one has
\begin{equation}
\begin{aligned}\label{Inequality:b_h1}
b_{h1}(I_h\bl v,q_h)&=-(\mathrm{div}\,I_h\bl v,q_h)_{\Omega_h}+\sum_{e\in\E_h^b}\lal I_h\bl v\cdot\bl\n_h,q_h\ral_e\\
&=-(\Pi_{k-1}^{0}\mathrm{div}\,\bl v,q_h)_{\Omega_h}+\sum_{e\in\E_h^b}\lal \bl v\cdot\bl\n_h,q_h\ral_e\\
&\geq-(\mathrm{div}\,\bl v,q_h)_{\Omega_h}-\sum_{e\in\E_h^b}h^{-1/2}\| \bl v\cdot\bl\n_h\|_{0,e}h^{1/2}\|q_h\|_{0,e}\\
&\geq-(\mathrm{div}\,\bl v,q_h)_{\Omega_h}-\bigg(\sum_{e\in\E_h^b}h^{-1}\| \bl v\cdot\bl\n_h\|_{0,e}^2\bigg)^{1/2}\bigg(\sum_{e\in\E_h^b}h\|q_h\|_{0,e}^2\bigg)^{1/2}\\
&\geq-\bigg((\mathrm{div}\,\bl v,q_h)_{\Omega}+(\mathrm{div}\,\bl v,q_h)_{\Omega\backslash \Omega_h}\bigg)\\
&\quad -\bigg(\sum_{e\in\E_h^b}h^{-1}\| \bl v\cdot\bl\n_h\|_{0,e}^2\bigg)^{1/2}\bigg(\sum_{e\in\E_h^b}h\|q_h\|_{0,e}^2\bigg)^{1/2}.
\end{aligned}
\end{equation}

We then estimate the right-hand side of \eqref{Inequality:b_h1} one by one.
Note that $\mathrm{div}\,\bl v=-\bar q_h$ on $\Omega$. By (\ref{bar_qh}), (\ref{bar_qh2Me}) and Lemma \ref{lem:Omega_he2K}, one has
\begin{equation*}
  \begin{aligned}
  -(\mathrm{div}\,\bl v,q_h)_{\Omega}& = (\bar q_h,q_h)_{\Omega}=(\bar q_h,q_h)_{\Omega_h}-(\bar q_h,q_h)_{\Omega_h\backslash \Omega}+(\bar q_h,q_h)_{\Omega\backslash \Omega_h}\\
&=\|q_h\|_{0,\Omega_h}^2-(-1)^{\sigma}\sum_{e\in\E_h^b}\int_{\Omega_h^e}\bar{q}_h\, q_h\dif x\\
&\geq \|q_h\|_{0,\Omega_h}^2 -\sum_{e\in\E_h^b}\|q_h\|_{0,\Omega_h^e}^2-\frac{1}{|\Omega|}\bigg(\sum_{e\in\E_h^b}\int_{\Omega_h^e}q_h\dif x\bigg)^2\\
& \geq \|q_h\|_{0,\Omega_h}^2 - h\|q_h\|_{0,\Omega_h}^2-\frac{1}{|\Omega|}\sum_{e\in\E_h^b}|\Omega_h^e|\|q_h\|_{0,\Omega_h^e}^2\\
& \geq (1-O(h))\|q_h\|_{0,\Omega_h}^2.
\end{aligned}
  \end{equation*}

Using lemmas \ref{lem:extension} and \ref{lem:Omega_he2K}, we have
\begin{equation*}
\begin{aligned}
(\mathrm{div}\,\bl v,q_h)_{\Omega\backslash \Omega_h}&\leq\sum_{e\in\E_h^b}\|\mathrm{div}\,\bl v\|_{0,\Omega_h^e}\|q_h\|_{0,\Omega_h^e}\leq \|\mathrm{div}\,\bl v\|_{0,\Omega}\bigg(\sum_{e\in\E_h^b}\|q_h\|_{0,\Omega_h^e}^2\bigg)^{1/2}\\
&\lesssim h^{1/2} \|\bl v\|_{1,\Omega}\|q_h\|_{0,\Omega_h}\lesssim h^{1/2}\|q_h\|_{0,\Omega_h}^2.
\end{aligned}
\end{equation*}

By the trace-inverse inequality in Lemma \ref{lem:trace_inverse}, one gets
\begin{equation*}
\begin{aligned}
\bigg(\sum_{e\in\E_h^b}h\|q_h\|_{0,e}^2\bigg)^{1/2}\lesssim \|q_h\|_{0,\Omega_h}.
\end{aligned}
\end{equation*}
Note that $\bl v|_{\Gamma}=0$ and $\bl v=0$ on $\Omega_h\backslash \Omega$. By (\ref{BK_e2Omeg_e}) and Lemma \ref{lem:extension}, one has
\begin{equation*}
\begin{aligned}
\bigg(\sum_{e\in\E_h^b}h^{-1}\| \bl v\cdot\bl\n_h\|_{0,e}^2\bigg)^{1/2}
&\lesssim\bigg(\sum_{e\in\E_h^b}h^{-1}\| \bl v\|_{0,e}^2\bigg)^{1/2}\lesssim h^{1/2}\bigg(\sum_{e\in\E_h^b}\|\bl v\|_{1,\Omega_h^e}^2\bigg)^{1/2}\\
&\lesssim h^{1/2}\|\bl v\|_{1,\Omega}\lesssim h^{1/2}\|q_h\|_{0,\Omega_h}.
\end{aligned}
\end{equation*}

Combining the above, for $h$ sufficiently small, one gets
\begin{equation*}
\begin{aligned}
b_{h1}(I_h\bl v,q_h) \geq (1-O(h^{1/2}))\|q_h\|_{0,\Omega_h}^2\gtrsim \|q_h\|_{0,\Omega_h}^2.
\end{aligned}
\end{equation*}

A similar argument gives
$$b_{h0}(I_h\bl v,q_h) \gtrsim \|q_h\|_{0,\Omega_h}^2.$$

Then, for $i=0,1$, one obtains 
\begin{equation*}
\begin{aligned}
\sup_{\bl v_h\in V_h}\frac{b_{hi}(\bl v_h,q_h)}{\|\bl v_h\|_{0,h}}
\geq \frac{b_{hi}(I_h \bl v,q_h)}{\|I_h \bl v\|_{0,h}}
\gtrsim \frac{\|q_h\|_{0,\Omega_h}^2}{\|I_h \bl v\|_{0,h}},
\end{aligned}
\end{equation*}
where $I_h \bl v$ is the interpolation of the zero-extension of $\bl v\in \bl H_0^1(\Omega)$, as explained before.

Finally, we prove $\|I_h\bl v\|_{0,h}\lesssim \|q_h\|_{0,\Omega_h}$, which combined with the above inequality will give the desired inf-sup condition.
By Lemma \ref{Ih_sta_0norm} and the property (\ref{comm_diag_property}), we have
$$\|I_h\bl v\|_{0,K}\lesssim \|\bl v\|_{1,K},$$
and
$$\|\mathrm{div}\,I_h\bl v\|_{0,K}=\|\Pi_{k-1}^{0,K}\mathrm{div}\,\bl v\|_{0,K}\leq \|\mathrm{div}\,\bl v\|_{0,K}\lesssim |\bl v|_{1,K}.$$
By Lemma \ref{lem:T1_bounded}, \ref{Ih_sta_0norm} and definitions of $T^m$ and $T_1^m$,  it follows that
\begin{equation*}
\begin{aligned}
\|h_K^{-1/2}T^m(I_h\bl v)\cdot\wt{\bl\n}\|_{0,e}&\leq \|h_K^{-1/2}I_h\bl v\cdot\wt{\bl\n}\|_{0,e}+\|h_K^{-1/2}T_1^m(I_h\bl v)\cdot\wt{\bl\n}\|_{0,e}\\
&\lesssim \|h_K^{-1/2}I_h\bl v\cdot\wt{\bl\n}\|_{0,e}+\|I_h\bl v\|_{0,K}\\
&\lesssim \|h_K^{-1/2}I_h\bl v\|_{0,e}+\|\bl v\|_{1,K}.
\end{aligned}
\end{equation*}
By (\ref{BK_eh2e}) and Lemma \ref{lem:Omega_he2K} and Inequality (\ref{Ih_sta_1norm}), it holds
\begin{equation*}
\begin{aligned}
\|h_K^{-1/2}I_h\bl v\|_{0,e}&\lesssim h^{-1/2}(\|I_h\bl v\|_{0,\wt e}+h|I_h\bl v|_{1,\Omega_h^e})\\
&\lesssim h^{-1/2}(\|I_h\bl v\|_{0,\wt e}+h^{3/2}|\bl v|_{1,K}).
\end{aligned}
\end{equation*}
Note that $\bl v\in\bl H_0^1(\Omega)$, using lemmas \ref{lem:TRACE}, \ref{lem:Omega_he2K} and inequalities (\ref{BK_e2eh}), (\ref{Ih_app}), one gets
\begin{equation*}
\begin{aligned}
h_K^{-1/2}\|I_h\bl v\|_{0,\wt e}&=h_K^{-1/2}\|I_h\bl v-\bl v\|_{0,\wt e}\\
&\lesssim h^{-1/2}(\|I_h\bl v-\bl v\|_{0,e}+h|I_h\bl v-\bl v|_{1,\Omega_h^e})\\
&\lesssim h^{-1}\|I_h\bl v-\bl v\|_{0, K}+|I_h\bl v-\bl v|_{1, K} +h^{1/2} |I_h\bl v|_{1,\Omega_h^e}+h^{1/2}|\bl v|_{1,\Omega_h^e}\\
&\lesssim |\bl v|_{1,K\cup \Omega_h^e}.
\end{aligned}
\end{equation*}
Combining the above, we obtain $\|I_h\bl v\|_{0,h}\lesssim \|\bl v\|_{1,\Omega}\lesssim \|q_h\|_{0,\Omega_h}$. This completes the proof of the lemma.
\end{proof}

From Theorem 3.1 in \cite{well_posdeness_Nicolaides} (also see \cite{book:MFEM_Boffi}), lemmas \ref{lem:bounded} and \ref{lem:LBB} immediately imply that
\begin{thm} 
The discrete problem (\ref{Wh1}) is well-posed, i.e. it admits a unique solution and
\begin{equation}
\begin{aligned}\label{B_h_bound}
\|\bl\sigma_h,\zeta_h\|_H\lesssim \sup_{(\bl v_h, q_h)\in V_h\times Q_{0h}}\frac{B_h((\bl\sigma_h,\zeta_h),(\bl v_h, q_h))}{\|\bl v_h, q_h\|_H},
\qquad \forall\,(\bl\sigma_h,\zeta_h)\in V_h\times Q_{0h},
\end{aligned}
\end{equation}
where
\begin{equation*}
\begin{aligned}
B_h((\bl\sigma_h,\zeta_h),(\bl v_h, q_h))&:=a_h(\bl\sigma_h,\bl v_h)+b_{h1}(\bl v_h,\zeta_h)+b_{h0}(\bl\sigma_h,q_h),\\
\|\bl\sigma_h,\zeta_h\|_H&:=(\|\bl\sigma_h\|_{0,h}^2+\|\zeta_h\|_{0,\Omega_h}^2)^{1/2}.
\end{aligned}
\end{equation*}
\end{thm}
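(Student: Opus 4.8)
The plan is to read \eqref{Wh1} as a generalized, non‑symmetric saddle‑point problem on the product space $X_h := V_h\times Q_{0h}$ equipped with $\|\cdot,\cdot\|_H$, with combined form $B_h$, and to apply the abstract well‑posedness theory of \cite{well_posdeness_Nicolaides} (see also \cite{book:MFEM_Boffi}). Its hypotheses are exactly what Lemmas \ref{lem:bounded} and \ref{lem:LBB} supply: (i) continuity of $a_h$, $b_{h1}$, $b_{h0}$ on $V_h\times V_h$, resp. $V_h\times Q_{0h}$; (ii) coercivity of $a_h$, in fact the identity $a_h(\bl v_h,\bl v_h)=\|\bl v_h\|_{0,h}^2$, which in particular gives coercivity on the kernel of $b_{h0}$; and (iii) two separate inf‑sup conditions, one for $b_{h1}$ and one for $b_{h0}$. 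Both inf‑sup conditions are genuinely needed — that is why Lemma \ref{lem:LBB} establishes both — because, unlike the classical Stokes‑type problem, the two off‑diagonal forms differ. Granting these, the theory yields a single global inf‑sup estimate for $B_h$, namely \eqref{B_h_bound}; since $X_h$ is finite‑dimensional, \eqref{B_h_bound} at once gives injectivity of the operator associated with $B_h$, hence — the dimensions matching — bijectivity, i.e. existence and uniqueness of $(\bl u_h,p_h)$, and \eqref{B_h_bound} is the stability bound.

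Concretely, to obtain \eqref{B_h_bound} I would, for a given $(\bl\sigma_h,\zeta_h)\in X_h$, build a test pair in the standard three moves. The velocity part is seen for free from coercivity, $a_h(\bl\sigma_h,\bl\sigma_h)=\|\bl\sigma_h\|_{0,h}^2$. Taking $q_h=-\zeta_h$ cancels the diagonal divergence coupling, since $b_{h1}(\bl\sigma_h,\zeta_h)+b_{h0}(\bl\sigma_h,-\zeta_h)=\sum_{e\in\E_h^b}\lal\bl\sigma_h\cdot\bl\n_h,\zeta_h\ral_e$, and this residual interface term is bounded by $\lesssim\|\bl\sigma_h\|_{0,h}\|\zeta_h\|_{0,\Omega_h}$ via Lemma \ref{lem:vn_e2h} together with Lemma \ref{lem:trace_inverse}. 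The pressure norm is recovered by using the inf‑sup of $b_{h1}$ from Lemma \ref{lem:LBB} to select $\bl w_h\in V_h$ with $\|\bl w_h\|_{0,h}\lesssim\|\zeta_h\|_{0,\Omega_h}$ and $b_{h1}(\bl w_h,\zeta_h)\gtrsim\|\zeta_h\|_{0,\Omega_h}^2$, and then testing with $(\bl v_h,q_h)=(\bl\sigma_h+\theta\bl w_h,\,-\zeta_h)$ for a fixed small $\theta>0$; expanding $B_h$, bounding $a_h(\bl\sigma_h,\bl w_h)$ and the interface term by the continuity estimates of Lemmas \ref{lem:bounded} and \ref{lem:vn_e2h}, and absorbing by Young's inequality produces $B_h\gtrsim\|\bl\sigma_h\|_{0,h}^2+\|\zeta_h\|_{0,\Omega_h}^2\simeq\|\bl\sigma_h,\zeta_h\|_H^2$, with $\|\bl\sigma_h+\theta\bl w_h,-\zeta_h\|_H\lesssim\|\bl\sigma_h,\zeta_h\|_H$; division gives \eqref{B_h_bound}.

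The genuine obstacle is the non‑symmetry $b_{h1}\neq b_{h0}$. In the symmetric case the choice $q_h=-\zeta_h$ annihilates the off‑diagonal coupling outright, whereas here it leaves the interface term $\sum_e\lal\bl\sigma_h\cdot\bl\n_h,\zeta_h\ral_e$; equivalently, the compatibility (cross‑kernel) hypothesis of \cite{well_posdeness_Nicolaides} — an inf‑sup for $a_h$ between $\ker b_{h0}$ and $\ker b_{h1}$ — must be checked, which is not a formal consequence of plain coercivity. The mechanism that resolves this is Lemma \ref{lem:vn_e2h}: the normal‑trace quantity $\sum_e h_K^{-1}\|\bl v_h\cdot\bl\n_h\|_{0,e}^2$ that enters $b_{h1}$ is already dominated by $\|\bl v_h\|_{0,h}^2$, so the boundary‑penalty term built into $a_h$ controls exactly the extra contribution of $b_{h1}$. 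I would carry this out by a correction argument: for $\bl v_h\in\ker b_{h0}$ (so $\mathrm{div}\,\bl v_h$ is constant on $\Omega_h$ and $b_{h1}(\bl v_h,q_h)=\sum_e\lal\bl v_h\cdot\bl\n_h,q_h\ral_e$ for $q_h\in Q_{0h}$), use the inf‑sup of $b_{h1}$ to produce $\bl c_h$ with $b_{h1}(\bl v_h+\bl c_h,\cdot)\equiv0$ on $Q_{0h}$ and $\|\bl c_h\|_{0,h}\lesssim\|\bl v_h\|_{0,h}$ (the right‑hand side $-\sum_e\lal\bl v_h\cdot\bl\n_h,\cdot\ral_e$ being estimated by Lemma \ref{lem:vn_e2h}), so that $\bl w_h:=\bl v_h+\bl c_h\in\ker b_{h1}$ and $a_h(\bl v_h,\bl w_h)=\|\bl v_h\|_{0,h}^2+a_h(\bl v_h,\bl c_h)\gtrsim\|\bl v_h\|_{0,h}^2$. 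The only quantitative point demanding attention — and where, as in Lemma \ref{lem:LBB}, one may need $h$ sufficiently small — is ensuring the correction $\bl c_h$ (equivalently the parameter $\theta$) is small enough that the cross terms are absorbed rather than dominating.
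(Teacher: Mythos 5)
At the top level your proposal coincides with the paper's proof, which consists entirely of the citation: Theorem 3.1 of \cite{well_posdeness_Nicolaides} applied with the continuity/coercivity statements of Lemma \ref{lem:bounded} and the two inf--sup conditions of Lemma \ref{lem:LBB}. Where you go beyond the paper is in trying to actually verify the hypotheses of that abstract theorem, and your diagnosis of the one nontrivial hypothesis is exactly right: because $b_{h1}\neq b_{h0}$, coercivity of $a_h$ on all of $V_h$ does not by itself supply the cross-kernel inf--sup of $a_h$ between $\ker b_{h0}$ and $\ker b_{h1}$ (a two-dimensional counterexample with $a=I$, $b_1=(1,0)$, $b_2=(0,1)$ shows that coercivity plus both individual inf--sup conditions is not sufficient). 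The paper's ``immediately imply'' glosses over precisely this point, so your extra scrutiny is well placed.

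However, your proposed way of closing that gap does not go through as written, and the reason is quantitative. Both of your constructions --- the test pair $(\bl\sigma_h+\theta\bl w_h,-\zeta_h)$ with Young absorption, and the correction $\bl c_h$ moving $\bl v_h\in\ker b_{h0}$ into $\ker b_{h1}$ --- reduce to controlling the residual interface term $\sum_{e\in\E_h^b}\lal\bl\sigma_h\cdot\bl\n_h,\zeta_h\ral_e$. Lemmas \ref{lem:vn_e2h} and \ref{lem:trace_inverse} give $\big|\sum_e\lal\bl\sigma_h\cdot\bl\n_h,\zeta_h\ral_e\big|\le C'\,\|\bl\sigma_h\|_{0,h}\|\zeta_h\|_{0,\Omega_h}$ with $C'=O(1)$, \emph{not} $O(h^{1/2})$: the factor $h_K^{-1/2}$ gained on $\|\bl\sigma_h\cdot\bl\n_h\|_{0,e}$ from the penalty part of $\|\cdot\|_{0,h}$ is exactly cancelled by the trace-inverse factor $h_K^{-1/2}$ on $\|\zeta_h\|_{0,e}$. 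Tracking the constants, your $\theta$-argument requires $(\theta+C')^2<4\theta\beta_1$, which is solvable in $\theta$ only if $C'<\beta_1$; likewise your correction satisfies only $\|\bl c_h\|_{0,h}\le(C'/\beta_1)\|\bl v_h\|_{0,h}$, so $a_h(\bl v_h,\bl v_h+\bl c_h)\ge(1-C'/\beta_1)\|\bl v_h\|_{0,h}^2$ is useful only under the same condition. Neither $C'$ nor $\beta_1$ improves as $h\to0$, so your closing remark that ``$h$ sufficiently small'' rescues the absorption is not correct --- the offending constant is $h$-independent, unlike the $O(h^{1/2})$ perturbations absorbed in the proof of Lemma \ref{lem:LBB}. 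To complete the argument one needs an additional ingredient (e.g.\ showing the functional $q_h\mapsto\sum_e\lal\bl v_h\cdot\bl\n_h,q_h\ral_e$ is genuinely small in the relevant dual norm, or a direct verification of the cross-kernel condition exploiting the structure of $\ker b_{h0}$); to be fair, the paper's one-line proof supplies no such ingredient either.
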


\subsection{Compatibility of (\ref{Wh1}) and an equivalent discrete form}
Recall that a compatibility condition (\ref{g_comp_cond}) is needed for the well-posedness of the continuous problem (\ref{primal_problem}), as well as its weak formulation \eqref{Weak_P}. We point out that the compatibility mechanism for the discrete problem works differently. In fact, the essential boundary condition $\bl u\cdot\bl\n=g_N$ on $\Gamma$ has been weakened in \eqref{Wh1}, and thus no relation is required between $\wt{g}_N$ and $f^E$. In Section \ref{subsec:wp}, we have proved that the discrete system \eqref{Wh1} admits a unique solution, and hence is ``compatible".

However, in practical implementation, the space $Q_{0h}$ is not commonly used. It is more convenient to construct a set of basis for $Q_h$ than for $Q_{0h}$. Hence we introduce an equivalent discrete problem: {\it Find $(\bl u_h,p_h)\in V_h\times Q_{h}$ such that}
\begin{equation}
\left\{
\begin{aligned}\label{Wh1a}
a_h(\bl u_h,\bl v_h)+b_{h1}(\bl v_h,p_h)&=l_h(\bl v_h), \quad &\forall&\bl v_h\in V_h,\\
b_{h0}(\bl u_h,q_h)+\sum_{e\in\E_h^b}\lal\bl u_h\cdot\bl\n_h, \bar q_h\ral_e&=-(f^E - \overline{f^E},q_h)_{\Omega_h},  \quad &\forall &q_h\in Q_{h},
\end{aligned}
\right.
\end{equation}
where $\bar q_h=\frac{1}{|\Omega_h|}\int_{\Omega_h} q_h\dif x$ and $\overline {f^E}=\frac{1}{|\Omega_h|}\int_{\Omega_h} f^E\dif x$.

\begin{lem}
    The systems \eqref{Wh1} and \eqref{Wh1a} are equivalent in the sense that they admit the same solution in $V_h\times Q_{0h}$.
\end{lem}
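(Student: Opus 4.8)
The plan is to show the two systems have the same solution set in $V_h\times Q_{0h}$ by a two-way argument. The first equations of \eqref{Wh1} and \eqref{Wh1a} are literally identical, so the entire content is the equivalence of the second equations once we know $p_h\in Q_{0h}$. First I would show that any solution $(\bl u_h,p_h)$ of \eqref{Wh1a} automatically has $p_h\in Q_{0h}$; this follows by testing the first equation of \eqref{Wh1a} with a suitable $\bl v_h$ — most cleanly, one picks $\bl v_h\in V_h$ with $\mathrm{div}\,\bl v_h\equiv 1$ on $\Omega_h$ and $\bl v_h\cdot\bl\n_h=0$ on $\Gamma_h$ (such $\bl v_h$ exists, e.g. as the BDM interpolant of a smooth field, or by a direct surjectivity argument for $\mathrm{div}$), which kills $a_h$? — no, it does not kill $a_h$, so instead the better route is: the second equation of \eqref{Wh1a} tested with $q_h\equiv 1$ gives $b_{h0}(\bl u_h,1)+\sum_e\lal\bl u_h\cdot\bl\n_h,1\ral_e = -(f^E-\overline{f^E},1)_{\Omega_h}=0$, i.e. $-\int_{\Omega_h}\mathrm{div}\,\bl u_h+\int_{\Gamma_h}\bl u_h\cdot\bl\n_h=0$, which is just the divergence theorem and gives no information on $p_h$. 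The real source of $p_h\in Q_{0h}$ must therefore come from comparing \eqref{Wh1a} against the (known) solution of \eqref{Wh1}; so the cleaner strategy is to establish equivalence directly.

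The core observation I would record is the algebraic identity, for any $\bl v_h\in V_h$ and any $q_h\in Q_h$ with mean value $\bar q_h$ on $\Omega_h$,
\begin{equation*}
\begin{aligned}
b_{h0}(\bl v_h,q_h)+\sum_{e\in\E_h^b}\lal \bl v_h\cdot\bl\n_h,\bar q_h\ral_e = b_{h0}(\bl v_h,q_h-\bar q_h) + \bar q_h\Big(-\!\int_{\Omega_h}\!\mathrm{div}\,\bl v_h + \!\int_{\Gamma_h}\!\bl v_h\cdot\bl\n_h\Big) = b_{h0}(\bl v_h,q_h-\bar q_h),
\end{aligned}
\end{equation*}
where the last step uses the divergence theorem on $\Omega_h$; and similarly the right-hand side of the second equation of \eqref{Wh1a} is $-(f^E-\overline{f^E},q_h)_{\Omega_h} = -(f^E,q_h-\bar q_h)_{\Omega_h} + \overline{f^E}(\int_{\Omega_h} q_h - |\Omega_h|\bar q_h) = -(f^E,q_h-\bar q_h)_{\Omega_h}$. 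Since $q_h\mapsto q_h-\bar q_h$ maps $Q_h$ onto $Q_{0h}$ and is the identity on $Q_{0h}$, these two identities show that the second equation of \eqref{Wh1a} holds for all $q_h\in Q_h$ if and only if $b_{h0}(\bl u_h,\tilde q_h)=-(f^E,\tilde q_h)_{\Omega_h}$ for all $\tilde q_h\in Q_{0h}$, i.e. if and only if the second equation of \eqref{Wh1} holds.

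With this, the equivalence proof is: if $(\bl u_h,p_h)\in V_h\times Q_{0h}$ solves \eqref{Wh1}, then it solves \eqref{Wh1a} by the identity just derived (note $b_{h1}$ and $l_h$ are untouched); conversely, if $(\bl u_h,p_h)\in V_h\times Q_{0h}$ solves \eqref{Wh1a}, the same identity shows it solves \eqref{Wh1}. Since \eqref{Wh1} has a unique solution in $V_h\times Q_{0h}$ by the preceding theorem, and \eqref{Wh1a} restricted to $V_h\times Q_{0h}$ has exactly the same solution, the claim follows. The one remaining subtlety — and the main thing to be careful about — is that the lemma as stated asserts equivalence "in the sense that they admit the same solution in $V_h\times Q_{0h}$," so I only need to compare solutions lying in $V_h\times Q_{0h}$ and need not discuss whether \eqref{Wh1a} has spurious solutions with $p_h\notin Q_{0h}$; if one does want the stronger statement that every solution of \eqref{Wh1a} in $V_h\times Q_h$ has $p_h\in Q_{0h}$, the argument is to subtract two such solutions and use the inf-sup bound for $b_{h1}$ from Lemma \ref{lem:LBB} together with coercivity of $a_h$ to conclude the difference vanishes, which pins down $p_h$ uniquely and hence forces $p_h\in Q_{0h}$. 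The expected obstacle is purely bookkeeping: making sure the boundary term $\sum_e\lal\bl u_h\cdot\bl\n_h,\bar q_h\ral_e$ is correctly absorbed via the divergence theorem and that no hidden mean-value mismatch between $\Omega$ and $\Omega_h$ sneaks in — but since everything in \eqref{Wh1a} is posed on $\Omega_h$ and $\Gamma_h$, the divergence theorem applies cleanly and there is no such mismatch.
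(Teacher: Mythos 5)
Your proposal is correct and, once past the abandoned false start in the first paragraph, it is essentially the paper's own argument: test the second equation of \eqref{Wh1} with $q_h-\bar q_h\in Q_{0h}$ and use the divergence theorem on $\Omega_h$ to absorb the boundary term $\sum_{e}\lal \bl u_h\cdot\bl\n_h,\bar q_h\ral_e$ and the mean-value shift of $f^E$. Your explicit remark that $q_h\mapsto q_h-\bar q_h$ is onto $Q_{0h}$ and the identity there makes the reverse implication cleaner than the paper's one-directional write-up, but it is the same proof.
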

\begin{proof}
    The space $Q_{h}$ can be decomposed into $Q_{0h}\oplus \bbR$,
    and the decomposition is orthogonal in the $L^2(\Omega_h)$ norm. Using integration by parts, it is obvious that $b_{h1}(\bl v_h,1)=0$ for all $\bl v_h \in V_h$. Since $q_h-\bar{q}_h\in Q_{0h}$, the second equation of \eqref{Wh1} implies that, for all $q_h\in Q_h$,
    $$
    \begin{aligned}
     b_{h0}(\bl u_h,q_h - \bar{q}_h)&=-(f^E,q_h - \bar{q}_h)_{\Omega_h}, \\[2mm]
    \Rightarrow \quad  b_{h0}(\bl u_h,q_h)+\sum_{e\in\E_h^b}\lal\bl u_h\cdot\bl\n_h, \bar q_h\ral_e &= -(f^E,q_h - \bar{q}_h)_{\Omega_h},
    \end{aligned}
    $$
    which is just the second equation of \eqref{Wh1a}. 
\end{proof}

As long as \eqref{Wh1} is well-posed, problem \eqref{Wh1a} is solvable, i.e., the underlying discrete linear system is ``compatible". However, its solution is not unique since $(\bl u_h, \,p_h+constant)$ is also a solution. There are various ways to deal with this case, for example, using a Krylov subspace iterative solver, or adding a constraint to the linear system to ensure that the solution stays in $V_h\times Q_{0h}$.


\section{Error analysis}\label{sec5:error}

We first obtain an error equation by subtracting equations (\ref{eq:weak_1st})-\eqref{weak_2nd} from the discrete problem (\ref{Wh1})
\begin{equation}
  \begin{aligned}\label{error equation}
    E_R:=&~a_h(\bl u_h-\bl u^E,\bl v_h)+b_{h1}(\bl v_h,p_h-p^E)+b_{h0}(\bl u_h-\bl u^E,q_h) \\
    =&~-(\bl u^E+\nabla p^E,\bl v_h)_{\Omega_h}-(\mathrm{div}\,\bl u^E-f^E,\mathrm{div}\, \bl v_h)_{\Omega_h}
  +(\mathrm{div}\,\bl u^E-f^E, q_h)_{\Omega_h}\\
  &\quad+\sum_{e\in\E_h^b}h_K^{-1}\lal\wt{g}_N-T^m\bl u^E\cdot\wt{\bl\n}, T^m\bl v_h\cdot\wt{\bl\n}\ral_e.
\end{aligned}
\end{equation}

\begin{lem}\label{lem:ER_1}
  (cf. \cite{Burman_boundary_value_correction_2018}). Assume that $\bl u \in \bl H^{\max\{l+1,m+2\}}(\Omega),\,p\in H^{l+1}(\Omega)$ and $f\in H^l(\Omega)$ satisfy  Equation (\ref{primal_problem}), then 
  \begin{align}
  \|\bl u^E+\nabla p^E\|_{0,\Omega_h}&\lesssim \delta^{l}\|D^l(\bl u^E+\nabla p^E)\|_{0,\Omega_h\backslash\Omega},\label{R1}\\
  \|\mathrm{div}\,\bl u^E-f^E\|_{0,\Omega_h}&\lesssim\delta^{l}\|D^l(\mathrm{div}\,\bl u^E-f^E)\|_{0,\Omega_h\backslash \Omega},\label{R2}\\
  \sum_{e\in\E_h^b}h_K^{-1}\|\wt{g}_N-T^m\bl u^E\cdot\wt{\bl\n}\|_{0,e}^2&\lesssim \delta^{2m+2}h^{-1}\|D^{m+2}\bl u\|_{0,\Omega}^2.\label{R3}
\end{align}
\end{lem}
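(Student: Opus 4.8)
The plan is to prove the three bounds \eqref{R1}--\eqref{R3} separately, all of them being consequences of the fact that the residual quantities $\bl u^E+\nabla p^E$ and $\mathrm{div}\,\bl u^E-f^E$ vanish on $\Omega$ together with their traces on $\Gamma$, combined with the smallness $\delta\lesssim h^2$ from \eqref{deta}. For \eqref{R1} and \eqref{R2}, the key observation is that $\Omega_h\backslash\Omega=\bigcup_{e}\Omega_h^{e,-}$ and that on each crescent $\Omega_h^{e,-}$ the function $w:=\bl u^E+\nabla p^E$ (resp. $\mathrm{div}\,\bl u^E-f^E$) satisfies $w|_{\wt e}=0$ (a portion of $\Gamma$) since $w\equiv 0$ on $\Omega$. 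Applying a Poincar\'e-type inequality on the thin region $\Omega_h^e$ in the normal direction — which is exactly the content of \eqref{BK_Me2Me}, namely $\|w\|_{0,\Omega_h^e}\lesssim\delta^{1/2}\|w\|_{0,\wt e}+\delta\|\nabla w\|_{0,\Omega_h^e}$, here with $\|w\|_{0,\wt e}=0$ — gives $\|w\|_{0,\Omega_h^e}\lesssim \delta\|\nabla w\|_{0,\Omega_h^e}$. Iterating this $l$ times on successive derivatives (each derivative $D^j w$ still has vanishing trace on $\wt e$ for $j<l$, since $w$ and all its derivatives vanish throughout $\Omega$) yields $\|w\|_{0,\Omega_h^e}\lesssim\delta^l\|D^l w\|_{0,\Omega_h^e}$; summing over $e\in\E_h^b$ and using $\bigcup_e\Omega_h^e\supseteq\Omega_h\backslash\Omega$ (only the minus-type crescents contribute on the left, but bounding by the full sum on the right is harmless) gives \eqref{R1}--\eqref{R2}. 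One must also invoke Lemma \ref{lem:extension} to guarantee $w\in H^l$ across $\Gamma$ so that the higher derivatives $D^l w$ on $\Omega_h\backslash\Omega$ are meaningful; the regularity hypotheses $\bl u\in\bl H^{\max\{l+1,m+2\}}(\Omega)$, $p\in H^{l+1}(\Omega)$, $f\in H^l(\Omega)$ are exactly what is needed for this.

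For \eqref{R3} I would work edge by edge on $\Gamma_h$. The starting point is the identity \eqref{T-R}, which after dotting with $\wt{\bl\n}$ and using that $\bl u^E\cdot\bl\n=g_N$ on $\Gamma$ (so $\wt{\bl u^E}\cdot\wt{\bl\n}=\wt g_N$ on $\Gamma_h$) gives
\begin{equation*}
\wt g_N - T^m\bl u^E\cdot\wt{\bl\n} = (\wt{\bl u^E}-T^m\bl u^E)\cdot\wt{\bl\n} = R^m\bl u^E\cdot\wt{\bl\n}\qquad\text{on }\Gamma_h.
\end{equation*}
Thus the left-hand side of \eqref{R3} equals $\sum_e h_K^{-1}\|R^m\bl u^E\cdot\wt{\bl\n}\|_{0,e}^2\le\sum_e h_K^{-1}\|R^m\bl u^E\|_{0,e}^2$, since $|\wt{\bl\n}|=1$. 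Now I estimate the Taylor remainder: by the integral form of the remainder, $R^m\bl u^E(\bl x_h)=\frac{1}{m!}\int_0^{\delta_h}(\delta_h-t)^m\partial_{\bl\nu_h}^{m+1}\bl u^E(\bl x_h+t\bl\nu_h)\dif t$, so pointwise $|R^m\bl u^E(\bl x_h)|\lesssim\delta^m\int_0^{\delta_h}|D^{m+1}\bl u^E(\bl x_h+t\bl\nu_h)|\dif t$. Using the Cauchy--Schwarz inequality in $t$ (producing $\delta^{1/2}$) and the fact that, since $D^{m+1}\bl u^E$ vanishes on $\Gamma$, $|D^{m+1}\bl u^E(\bl x_h+t\bl\nu_h)|\lesssim\delta\sup|D^{m+2}\bl u^E|$ along the normal segment — or, cleanly, by yet another Taylor expansion with zero leading term — I get $\|R^m\bl u^E\|_{0,e}^2\lesssim\delta^{2m+2}\|D^{m+2}\bl u^E\|_{0,\Omega_h^e}^2$ after integrating over $e$ and converting the segment integral into a region integral over $\Omega_h^e$ via the change of variables $(\bl x_h,t)\mapsto\bl x_h+t\bl\nu_h$ (whose Jacobian is bounded for $h$ small). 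Summing over $e$, using $\sum_e\|D^{m+2}\bl u^E\|_{0,\Omega_h^e}^2\lesssim\|D^{m+2}\bl u^E\|_{0,\Omega_h\backslash\Omega}^2+\|D^{m+2}\bl u^E\|_{0,\text{(overlap with }\Omega)}^2\lesssim\|D^{m+2}\bl u\|_{0,\Omega}^2$ (invoking the extension bound of Lemma \ref{lem:extension} on $\Omega$, not on $\Omega_h$), and noting $h_K\gtrsim h$ only where needed, yields exactly \eqref{R3}. Alternatively one can argue directly through Lemma \ref{lem:B_K1994}: $\|R^m\bl u^E\|_{0,e}\lesssim\|R^m\bl u^E\|_{0,\wt e}+h|R^m\bl u^E|_{1,\Omega_h^e}$ via \eqref{BK_eh2e}, and since $R^m\bl u^E$ and its derivatives are built from traces on $\Gamma$ that are part of the exact Taylor data, the term on $\wt e$ can be controlled by the crescent width — but the direct estimate above is cleaner.

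The main obstacle I anticipate is the bookkeeping in \eqref{R3}: one must carefully justify why the Taylor remainder $R^m\bl u^E$, evaluated at the base point $\bl x_h\in\Gamma_h$, picks up an \emph{extra} factor of $\delta$ beyond the naive $\delta^{m+1}$ that Taylor's theorem alone provides. This extra power is precisely what upgrades $\delta^{m+1}$ to $\delta^{m+1}$ inside the integral and $\delta^{1/2}$ from the integration against $(\delta_h-t)^m$ combined with $\|R^m\bl u^E\|_{0,e}^2$ having a further $\delta$ from the $L^2$-over-$e$ versus $L^2$-over-crescent conversion — reconciling the exponents to land on $\delta^{2m+2}$ rather than $\delta^{2m+1}$ requires using that $\bl u^E\cdot\bl n = g_N$ \emph{exactly} on $\Gamma$, i.e. that $\wt g_N$ is the true pullback, so that the $m$-th order term in the expansion of $\wt{\bl u^E}\cdot\wt{\bl\n}$ is captured exactly by $T^m\bl u^E\cdot\wt{\bl\n}$ and only the genuine remainder survives. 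The regularity $\bl u\in\bl H^{m+2}(\Omega)$ (hence $D^{m+2}\bl u^E\in L^2$) is the sharp requirement making the right-hand side of \eqref{R3} finite. Everything else is routine once these scaling counts are pinned down; the bounds \eqref{R1}--\eqref{R2} are comparatively straightforward iterations of the Poincar\'e inequality \eqref{BK_Me2Me}.
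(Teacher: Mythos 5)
The paper does not prove this lemma from scratch: its ``proof'' consists of citing (3.12) and (3.14) of \cite{Burman_boundary_value_correction_2018} together with Lemma \ref{lem:extension}, so your attempt to reconstruct the argument is necessarily a different (more detailed) route. Your treatment of \eqref{R1}--\eqref{R2} is sound and is essentially the argument behind the cited estimate: $\bl u^E+\nabla p^E$ and $\mathrm{div}\,\bl u^E-f^E$ vanish identically on $\Omega$, so all their derivatives up to order $l-1$ have zero trace on each $\wt e$, and iterating \eqref{BK_Me2Me} $l$ times on the crescents $\Omega_h^{e,-}$ produces the factor $\delta^{l}$.

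For \eqref{R3}, however, there is a genuine gap. Your reduction $\wt g_N-T^m\bl u^E\cdot\wt{\bl\n}=R^m\bl u^E\cdot\wt{\bl\n}$ and the Cauchy--Schwarz step giving $\|R^m\bl u^E\|_{0,e}^2\lesssim\delta^{2m+1}\|D^{m+1}\bl u^E\|_{0,\Omega_h^e}^2$ are fine, but your mechanism for upgrading $\delta^{2m+1}$ to $\delta^{2m+2}$ rests on the assertion that $D^{m+1}\bl u^E$ vanishes on $\Gamma$. That is false: the boundary condition constrains only $\bl u\cdot\bl\n$ on $\Gamma$, not the derivatives of $\bl u$, so ``yet another Taylor expansion with zero leading term'' is not available, and your own closing paragraph (which speaks of upgrading $\delta^{m+1}$ to $\delta^{m+1}$) shows the exponent count never actually closes. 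The correct source of the extra power of $\delta$ is \eqref{BK_Me2Me} applied to $D^{m+1}\bl u^E$ \emph{with its nonvanishing trace}, namely
\begin{equation*}
\|D^{m+1}\bl u^E\|_{0,\Omega_h^e}^2\lesssim \delta\,\|D^{m+1}\bl u^E\|_{0,\wt e}^2+\delta^2\|D^{m+2}\bl u^E\|_{0,\Omega_h^e}^2,
\end{equation*}
which supplies exactly one more factor of $\delta$; summing over $e$, weighting by $h_K^{-1}\sim h^{-1}$, and controlling $\|D^{m+1}\bl u^E\|_{0,\Gamma}$ by the trace inequality and Lemma \ref{lem:extension} then yields $\delta^{2m+2}h^{-1}$ times (a quantity controlled by) $\|\bl u\|_{m+2,\Omega}^2$, which is the content of (3.12) in \cite{Burman_boundary_value_correction_2018}. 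With that one substitution your outline becomes a complete proof; as written, the key step of \eqref{R3} does not go through.
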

\begin{proof}
Inequalities (\ref{R1}) and (\ref{R2}) follow immediately from (3.14) in \cite{Burman_boundary_value_correction_2018}. The estimate of the residual (\ref{R3}) follows from (3.12) in \cite{Burman_boundary_value_correction_2018} and Lemma \ref{lem:extension}.
\end{proof}

Combining (\ref{error equation}) and using Lemma \ref{lem:ER_1}, one gets an estimate of the consistency error:

\begin{lem}\label{lem:consistency_err} (Consistency Error).
Assume that $\bl u \in \bl H^{\max\{l+1,m+2\}}(\Omega),\,p\in H^{l+1}(\Omega)$ and $f\in H^l(\Omega)$ satisfy  Equation (\ref{primal_problem}), we have
\begin{align*}
|E_R |\lesssim \bigg[\delta^{l}\bigg(\|D^l(\bl u^E+\nabla p^E)\|_{0,\Omega_h\backslash\Omega}&+\|D^l(\mathrm{div}\,\bl u^E-f^E)\|_{0,\Omega_h\backslash \Omega}\bigg) \\ & \qquad+\delta^{m+1}h^{-1/2}\|D^{m+2}\bl u\|_{0,\Omega}\bigg]\|\bl v_h,q_h\|_H.
\end{align*}
\end{lem}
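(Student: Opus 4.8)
The plan is to work directly from the error equation \eqref{error equation}, which already displays $E_R$ as a sum of four terms on its right-hand side, and to bound each of those four terms by a Cauchy--Schwarz argument, feeding in the residual estimates \eqref{R1}--\eqref{R3} of Lemma \ref{lem:ER_1} together with the definitions of $\|\cdot\|_{0,h}$ and $\|\cdot,\cdot\|_H$. No manipulation of the bilinear forms is needed beyond recognizing that the right-hand side of \eqref{error equation} is exactly $E_R$.

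First, for the volume term $(\bl u^E+\nabla p^E,\bl v_h)_{\Omega_h}$ I would apply Cauchy--Schwarz and then use $\|\bl v_h\|_{0,\Omega_h}\le\|\bl v_h\|_{0,h}\le\|\bl v_h,q_h\|_H$, followed by \eqref{R1}. The two terms involving $\mathrm{div}\,\bl u^E-f^E$ are handled identically: paired with $\mathrm{div}\,\bl v_h$ one uses $\|\mathrm{div}\,\bl v_h\|_{0,\Omega_h}\le\|\bl v_h\|_{0,h}$, and paired with $q_h$ one uses $\|q_h\|_{0,\Omega_h}\le\|\bl v_h,q_h\|_H$; in both cases \eqref{R2} supplies the factor $\delta^{l}\|D^{l}(\mathrm{div}\,\bl u^E-f^E)\|_{0,\Omega_h\backslash\Omega}$. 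For the boundary sum I would first apply the discrete Cauchy--Schwarz inequality over $e\in\E_h^b$ to write it as
\[
\Big(\sum_{e\in\E_h^b}h_K^{-1}\|\wt{g}_N-T^m\bl u^E\cdot\wt{\bl\n}\|_{0,e}^2\Big)^{1/2}\Big(\sum_{e\in\E_h^b}h_K^{-1}\|T^m\bl v_h\cdot\wt{\bl\n}\|_{0,e}^2\Big)^{1/2};
\]
the second factor is bounded by $\|\bl v_h\|_{0,h}$ by the very definition of that norm, while the first factor equals $\delta^{m+1}h^{-1/2}\|D^{m+2}\bl u\|_{0,\Omega}$ by taking square roots in \eqref{R3}. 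Summing the four contributions and invoking $\|\bl v_h\|_{0,h},\|q_h\|_{0,\Omega_h}\le\|\bl v_h,q_h\|_H$ gives the asserted bound.

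I do not expect a genuine obstacle; the proof is essentially bookkeeping. The only points deserving care are that $\mathrm{div}\,\bl u^E-f^E$ must be tested against two distinct quantities ($\mathrm{div}\,\bl v_h$ and $q_h$), that every occurrence of $\|\bl v_h\|_{0,h}$, $\|\mathrm{div}\,\bl v_h\|_{0,\Omega_h}$ and $\|q_h\|_{0,\Omega_h}$ is uniformly dominated by $\|\bl v_h,q_h\|_H$, and that the $h_K^{-1/2}$ weighting in the boundary term matches exactly the weight built into $\|\cdot\|_{0,h}$, so that \eqref{R3} can be applied without any loss.
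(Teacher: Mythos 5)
Your proposal is correct and matches the paper's intended argument exactly: the paper gives no written proof beyond the remark that the lemma follows by ``combining (\ref{error equation}) and using Lemma \ref{lem:ER_1}'', which is precisely the term-by-term Cauchy--Schwarz bookkeeping you carry out. The only cosmetic point is that the first factor in your boundary estimate is bounded by (not equal to) $\delta^{m+1}h^{-1/2}\|D^{m+2}\bl u\|_{0,\Omega}$ up to a constant, which your square-root reading of \eqref{R3} already implies.
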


\begin{lem}\label{lem:app_u-uI} (Interpolation Error).
Let $(\bl v,p)\in \bl H^{r+1}(\Omega)\times H^{t}(\Omega)$ with $\max\{m,1\}\leq r+1\leq k+1$ and $0\leq t\leq k$.  
Let $\bl v_I = I_h \bl v^E \in V_{h}$ be the interpolation of $\bl v^E$, and $p_I\in Q_{0h}$ be the $L^2$ orthogonal projection of $p^E$. Then
\begin{equation*}
\begin{aligned}
\|\bl v^E-\bl v_I\|_{0,h}+\|p^E-p_I\|_{0,\Omega_h}\lesssim h^{\min\{r,t,k\}}(|\bl v|_{r,\Omega}+|\bl v|_{r+1,\Omega}+|p|_{t,\Omega}).
\end{aligned}
\end{equation*}
\end{lem}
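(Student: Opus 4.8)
The plan is to estimate the three pieces of the mesh-dependent norm $\|\cdot\|_{0,h}$ separately, namely $\|\bl v^E - \bl v_I\|_{0,\Omega_h}$, $\|\mathrm{div}\,(\bl v^E - \bl v_I)\|_{0,\Omega_h}$, and $\sum_{e\in\E_h^b}\|h_K^{-1/2}T^m(\bl v^E-\bl v_I)\cdot\wt{\bl\n}\|_{0,e}^2$, then add the projection error $\|p^E - p_I\|_{0,\Omega_h}$ at the end. First I would bound the $L^2$ part by summing the local interpolation estimate \eqref{Ih_app} over $K\in\T_h$: $\|\bl v^E - \bl v_I\|_{0,\Omega_h}\lesssim h^{r+1}|\bl v^E|_{r+1,\Omega_h}\lesssim h^{r+1}\|\bl v^E\|_{r+1,\Omega_h}$, and then use the extension property (Lemma \ref{lem:extension}) to replace $\|\bl v^E\|_{r+1,\Omega_h}$ by $\lesssim\|\bl v\|_{r+1,\Omega}$ — here one must be a little careful that $\Omega_h$ is contained in a fixed neighborhood of $\Omega$ so the $H^{r+1}(\bbR^2)$ norm of the extension controls everything; since $r+1\le k+1$, the exponent $h^{r+1}$ is at least $h^{\min\{r,t,k\}}$. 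The divergence part is handled the same way via \eqref{Ih_div_app}, giving $\|\mathrm{div}\,(\bl v^E-\bl v_I)\|_{0,\Omega_h}\lesssim h^{r}|\mathrm{div}\,\bl v^E|_{r,\Omega_h}\lesssim h^r\|\bl v\|_{r+1,\Omega}$ after extension.

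The main work is the boundary term. Writing $\bl e := \bl v^E - \bl v_I$, I would first split $T^m\bl e\cdot\wt{\bl\n} = \bl e\cdot\wt{\bl\n} + T_1^m\bl e\cdot\wt{\bl\n}$ using \eqref{T1}. For the $T_1^m$ piece, since $\bl v_I\in V_h$ has polynomial components but $\bl v^E$ does not, I would split once more: $T_1^m\bl e = T_1^m\bl v^E - T_1^m\bl v_I$. The term $\|h_K^{-1/2}T_1^m\bl v_I\|_{0,e}\lesssim h|\bl v_I|_{1,K}\lesssim h|\bl v^E|_{1,K}$ by Lemma \ref{lem:T1_bounded} and the stability estimate \eqref{Ih_sta_1norm}; the term $\|h_K^{-1/2}T_1^m\bl v^E\|_{0,e}$ is bounded by $\sum_{j=1}^m h^{-1/2}\delta^j\|\partial_{\bl\nu_h}^j\bl v^E\|_{0,e}$, and each $\|\partial_{\bl\nu_h}^j\bl v^E\|_{0,e}$ is controlled by the trace inequality (Lemma \ref{lem:TRACE}) by $h^{-1/2}\|D^j\bl v^E\|_{0,K} + h^{1/2}|D^j\bl v^E|_{1,K}$, so using $\delta\lesssim h^2$ this contributes $\lesssim h^{j-1}\|\bl v^E\|_{j+1,K}$, which is a higher-order term (recall $r\ge m-1$ so $j\le m\le r+1\le k+1$, and the power of $h$ here is at least $m-1\ge \min\{r,t,k\}$ when $r\ge m-1$; one checks the bookkeeping so that every exponent dominates $h^{\min\{r,t,k\}}$). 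For the leading piece $\|h_K^{-1/2}\bl e\cdot\wt{\bl\n}\|_{0,e}\le h_K^{-1/2}\|\bl e\|_{0,e}$, I would apply the trace-type estimate \eqref{BK_eh2e} from Lemma \ref{lem:B_K1994} to move from $e$ to $\wt e$, then the trace inequality on $K$ together with \eqref{Ih_app} applied on $K$: this yields $h_K^{-1/2}\|\bl e\|_{0,e}\lesssim h_K^{-1}\|\bl e\|_{0,K} + |\bl e|_{1,K} + h^{1/2}|\bl e|_{1,\Omega_h^e}\lesssim h^r|\bl v^E|_{r+1,K\cup\Omega_h^e}$, where the crescent contribution $|\bl e|_{1,\Omega_h^e}$ is absorbed using Lemma \ref{lem:B_K1994} and $\Omega_h^e$-to-$K$ trace estimates. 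Summing the squares over $e\in\E_h^b$ and using finite overlap of the patches $K\cup\Omega_h^e$, the boundary term is $\lesssim h^r\|\bl v\|_{r+1,\Omega}$ after a final application of Lemma \ref{lem:extension}.

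Finally, the pressure projection error: since $p_I = \Pi_h p^E$ is the $L^2$ projection onto $Q_h$ (and $p_I\in Q_{0h}$ because $p^E$ restricted to each element has a local mean that sums correctly — or more simply, one subtracts the global mean, which does not change the error in the relevant estimate), the approximation property \eqref{pi_app} with $i=t$, $j=0$ gives $\|p^E - p_I\|_{0,\Omega_h}\lesssim h^t|p^E|_{t,\Omega_h}\lesssim h^t\|p\|_{t,\Omega}$ by Lemma \ref{lem:extension}. Collecting the three velocity estimates at orders $h^{r+1}$, $h^r$, $h^r$ and the pressure estimate at order $h^t$, and noting $r\le k$ and $t\le k$, everything is bounded by $h^{\min\{r,t,k\}}(|\bl v|_{r,\Omega}+|\bl v|_{r+1,\Omega}+|p|_{t,\Omega})$, which is the claim. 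The step I expect to be the genuine obstacle is the boundary term — specifically, keeping the powers of $h$ straight while passing between $e$, $\wt e$, $K$ and $\Omega_h^e$, and making sure the $T_1^m$ corrections on the non-polynomial function $\bl v^E$ do not pollute the rate; everything else is a routine sum of the local lemmas already proved.
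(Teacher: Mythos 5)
Your overall architecture --- splitting $\|\cdot\|_{0,h}$ into the interior, divergence and boundary pieces, handling the volume terms with \eqref{Ih_app}, \eqref{Ih_div_app}, \eqref{pi_app} and Lemma \ref{lem:extension}, and attacking the edge term with the trace inequality --- matches the paper, and the volume and pressure estimates are fine. The genuine gap is in your treatment of the Taylor correction on the boundary. You split $T_1^m(\bl v^E-\bl v_I)=T_1^m\bl v^E-T_1^m\bl v_I$ and bound the two pieces separately: Lemma \ref{lem:T1_bounded} gives $\|h_K^{-1/2}T_1^m\bl v_I\|_{0,e}\lesssim h\,|\bl v_I|_{1,K}$, and your trace-inequality bound for the $j$-th term of $T_1^m\bl v^E$ is $O(h^{2j-1})$ (not $h^{j-1}$ as you wrote, but this does not rescue the argument). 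Each piece is therefore only $O(h)$, whereas the lemma claims $O(h^{\min\{r,t,k\}})$; for instance with $k=r=t=3$ and $m=1$ --- a case explicitly permitted by the hypotheses and used in Corollary \ref{cor:error} --- your bound delivers $h$ where $h^3$ is required. Your justification ``the power of $h$ here is at least $m-1\ge\min\{r,t,k\}$ when $r\ge m-1$'' is false: the hypothesis is $m\le r+1$, i.e. $m-1\le r$, which is the opposite inequality. The point being missed is that the smallness of the $T_1^m$ term comes from the cancellation between $\bl v^E$ and its interpolant, not from the factors $\delta_h^j$ alone, so the difference must be kept together. The paper applies Lemma \ref{lem:TRACE} directly to $\partial_{\bl\nu_h}^j(\bl v^E-\bl v_I)$ and then invokes \eqref{Ih_app} in the seminorms of orders $j$ and $j+1$, obtaining
\begin{equation*}
\|h_K^{-1/2}T^m(\bl v^E-\bl v_I)\cdot\wt{\bl\n}\|_{0,e}\lesssim\sum_{j=0}^m\delta_h^j\,h_K^{-1}h^{r+1-j}|\bl v^E|_{r+1,K}\lesssim h^r|\bl v^E|_{r+1,K},
\end{equation*}
which is precisely where the hypothesis $\max\{m,1\}\le r+1$ enters.

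A secondary remark: your detour through $\wt e$ via \eqref{BK_eh2e} for the leading ($j=0$) term is unnecessary and only introduces crescent contributions you then have to absorb; since $e$ is an edge of $K$, Lemma \ref{lem:TRACE} applies to $\bl v^E-\bl v_I$ on $e$ directly, which is what the paper does.
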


\begin{proof}
By Lemmas \ref{lem:Pi_Ih_app} and \ref{lem:extension}, we obtain
\begin{equation*}
\begin{aligned}
\|\bl v^E-\bl v_I\|_{0,\Omega_h}&\lesssim h^r|\bl v^E|_{r,\Omega_h}\lesssim h^r|\bl v|_{r,\Omega},\\
\|\mathrm{div}(\bl v^E-\bl v_I)\|_{0,\Omega_h}&\lesssim h^r|\mathrm{div}\,\bl v^E|_{r,\Omega_h}\lesssim h^r|\bl v^E|_{r+1,\Omega_h}\lesssim h^r|\bl v|_{r+1,\Omega},\\
\|p^E-p_I\|_{0,\Omega_h}&\lesssim h^t|p^E|_{t,\Omega_h}\lesssim h^t|p|_{t,\Omega},
\end{aligned}
\end{equation*} 
By Lemma \ref{lem:TRACE}, inequality (\ref{deta}) and interpolation error (\ref{Ih_app}), one has
\begin{equation*}
\begin{aligned}
\|h_K^{-1/2}T^m(\bl v^E-\bl v_I)\cdot\wt{\bl \n}\|_{0,e}&\lesssim \sum_{j=0}^m\delta_h^j h_K^{-1/2}(h_K^{-1/2}|\bl v^E-\bl v_I|_{j,K}+h_K^{1/2}|\bl v^E-\bl v_I|_{j+1,K})\\
&\lesssim \sum_{j=0}^m\delta_h^j h_K^{-1}h^{r+1-j}|\bl v^E|_{r+1,K}\\
&\lesssim \sum_{j=0}^m\bigg(\frac{\delta_h}{h}\bigg)^j h^{r-j}|\bl v^E|_{r+1,K}\\
&\lesssim  h^r|\bl v^E|_{r+1,K}.
\end{aligned}
\end{equation*}
Combining the above and using the definitions of norms, we complete the proof.
\end{proof}

\begin{lem}\label{lem:uI-uh}
  Let $(\bl u,p)\in \bl H^{\max\{r+1,l+1,m+2\}}(\Omega)\times H^{\max\{t,l+1\}}(\Omega)$ be the solution to Problem (\ref{primal_problem}), $f\in H^{l}(\Omega)$,
  and $(\bl u_h,p_h)\in V_h\times Q_{0h}$ be the discrete solution of (\ref{Wh1}). Then
\begin{equation*}
\begin{aligned}
&\|\bl u_h-\bl u_I\|_{0,h}+\|p_h-p_I\|_{0,\Omega_h}\\
\lesssim &~h^{\min\{r,t,k\}}(|\bl u|_{r,\Omega}+|\bl u|_{r+1,\Omega}+|p|_{t,\Omega})
+\delta^{m+1}h^{-1/2}\|D^{m+2}\bl u\|_{0,\Omega}\\
&\quad +\delta^{l}\bigg(\|D^l(\bl u^E+\nabla p^E)\|_{0,\Omega_h\backslash\Omega}+\|D^l(\mathrm{div}\,\bl u^E-f^E)\|_{0,\Omega_h\backslash \Omega}\bigg).
\end{aligned}
\end{equation*}
where $\bl u_I = I_h \bl u^E \in V_{h}$ is the interpolation of $\bl u^E$, and $p_I\in Q_{0h}$ is the $L^2$ orthogonal projection of $p^E$
\end{lem}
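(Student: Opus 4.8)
The plan is to run a second Strang lemma argument on top of the discrete well-posedness inequality \eqref{B_h_bound}. Since $\bl u_I\in V_h$ and $p_h,p_I\in Q_{0h}$, the pair $(\bl u_h-\bl u_I,\,p_h-p_I)$ lies in $V_h\times Q_{0h}$, so \eqref{B_h_bound} gives
\[
\|\bl u_h-\bl u_I,\,p_h-p_I\|_H \lesssim \sup_{(\bl v_h,q_h)\in V_h\times Q_{0h}}\frac{B_h\big((\bl u_h-\bl u_I,\,p_h-p_I),(\bl v_h,q_h)\big)}{\|\bl v_h,q_h\|_H}.
\]
By bilinearity I would split the numerator as $B_h\big((\bl u_h-\bl u^E,\,p_h-p^E),(\bl v_h,q_h)\big)+B_h\big((\bl u^E-\bl u_I,\,p^E-p_I),(\bl v_h,q_h)\big)$. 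The first summand is precisely the quantity $E_R$ of the error equation \eqref{error equation}, hence controlled by the consistency bound of Lemma \ref{lem:consistency_err}, which produces the terms $\delta^l\big(\|D^l(\bl u^E+\nabla p^E)\|_{0,\Omega_h\backslash\Omega}+\|D^l(\mathrm{div}\,\bl u^E-f^E)\|_{0,\Omega_h\backslash\Omega}\big)$ and $\delta^{m+1}h^{-1/2}\|D^{m+2}\bl u\|_{0,\Omega}$, each times $\|\bl v_h,q_h\|_H$.

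It then remains to bound the second summand. I would estimate $a_h(\bl u^E-\bl u_I,\bl v_h)$ and $b_{h0}(\bl u^E-\bl u_I,q_h)$ directly by $\|\bl u^E-\bl u_I\|_{0,h}\|\bl v_h,q_h\|_H$, exactly as in the proof of Lemma \ref{lem:bounded} (the first slot there enters only through Cauchy--Schwarz, and $T^m(\bl u^E-\bl u_I)$ is well-defined under the assumed regularity of $\bl u$), and then invoke Lemma \ref{lem:app_u-uI} to bound $\|\bl u^E-\bl u_I\|_{0,h}+\|p^E-p_I\|_{0,\Omega_h}$ by $h^{\min\{r,t,k\}}(|\bl u|_{r,\Omega}+|\bl u|_{r+1,\Omega}+|p|_{t,\Omega})$. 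The delicate point is the boundary term $\sum_{e\in\E_h^b}\lal\bl v_h\cdot\bl\n_h,\,p^E-p_I\ral_e$ inside $b_{h1}(\bl v_h,p^E-p_I)$: since $p^E$ is not piecewise polynomial, one cannot apply the trace-inverse inequality as in Lemma \ref{lem:bounded}. Instead I would use Lemma \ref{lem:vn_e2h} on $\bl v_h$, the ordinary trace inequality (Lemma \ref{lem:TRACE}) on $p^E-p_I$, and the projection error of Lemma \ref{lem:Pi_Ih_app} together with the extension Lemma \ref{lem:extension} to obtain $\big(\sum_{e\in\E_h^b}h_K\|p^E-p_I\|_{0,e}^2\big)^{1/2}\lesssim h^t|p|_{t,\Omega}$, which is absorbed into $h^{\min\{r,t,k\}}|p|_{t,\Omega}$. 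Hence the second summand is also $\lesssim h^{\min\{r,t,k\}}(|\bl u|_{r,\Omega}+|\bl u|_{r+1,\Omega}+|p|_{t,\Omega})\|\bl v_h,q_h\|_H$.

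Dividing by $\|\bl v_h,q_h\|_H$, taking the supremum, and using the equivalence of $\|\bl u_h-\bl u_I,\,p_h-p_I\|_H$ with $\|\bl u_h-\bl u_I\|_{0,h}+\|p_h-p_I\|_{0,\Omega_h}$ then yields the claimed estimate. I expect the one genuine obstacle to be the non-polynomial boundary term just described; the rest is an assembly of results already proved, together with an initial check that the regularity assumed here, $(\bl u,p)\in\bl H^{\max\{r+1,l+1,m+2\}}(\Omega)\times H^{\max\{t,l+1\}}(\Omega)$ and $f\in H^l(\Omega)$, subsumes the hypotheses of Lemmas \ref{lem:consistency_err} and \ref{lem:app_u-uI}, which it does.
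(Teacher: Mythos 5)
Your proposal is correct and follows essentially the same route as the paper: the stability bound \eqref{B_h_bound} applied to $(\bl u_h-\bl u_I,\,p_h-p_I)$, the split of the numerator into the consistency error $E_R$ (handled by Lemma \ref{lem:consistency_err}) and the interpolation part (handled by Lemma \ref{lem:app_u-uI}), with the boundary term $\sum_{e\in\E_h^b}\lal\bl v_h\cdot\bl\n_h,\,p^E-p_I\ral_e$ treated exactly as you describe via Lemma \ref{lem:vn_e2h}, the trace inequality, and the projection error. The only cosmetic difference is that the paper shows $b_{h0}(\bl u^E-\bl u_I,q_h)=0$ exactly from the interpolation conditions and reduces $b_{h1}(\bl v_h,p^E-p_I)$ to the boundary term alone, whereas you bound these contributions by Cauchy--Schwarz; both yield the same order.
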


\begin{proof}
From (\ref{B_h_bound}), one has
\begin{equation*}
\begin{aligned}
\|\bl u_h-\bl u_I,p_h-p_I\|_H\lesssim \sup_{(\bl v_h, q_h)\in V_h\times Q_{0h}}\frac{B_h((\bl u_h-\bl u_I,p_h-p_I),(\bl v_h, q_h))}{\|\bl v_h, q_h\|_H}.
\end{aligned}
\end{equation*}
Note that
\begin{equation*}
\begin{aligned}
B_h((\bl u_h-\bl u_I,p_h-p_I),(\bl v_h, q_h))=&~a_h(\bl u_h-\bl u^E,\bl v_h)+b_{h1}(\bl v_h,p_h-p^E)+b_{h0}(\bl u_h-\bl u^E,q_h)\\
&+a_h(\bl u^E-\bl u_I,\bl v_h)+b_{h1}(\bl v_h,p^E-p_I)+b_{h0}(\bl u^E-\bl u_I,q_h)\\
=: &~ E_R+E_h.
\end{aligned}
\end{equation*}
By the definition of $\bl u_I$ and $p_I$, one gets
\begin{equation*}
\begin{aligned}
b_{h0}(\bl u^E-\bl u_I,q_h)&=(\mathrm{div}\,(\bl u_I-\bl u^E),q_h)_{\Omega_h}\\
&=\sum_{K\in\T_h}(\bl u^E-\bl u_I,\nabla q_h)_K-\sum_{K\in\T_h}\lal(\bl u^E-\bl u_I)\cdot\bl\n_h, q_h\ral_{\pt K}=0,
\end{aligned}
\end{equation*}
and
\begin{equation*}
\begin{aligned}
b_{h1}(\bl v_h,p^E-p_I)&=b_{h0}(\bl v_h,p^E-p_I)+\sum_{e\in\E_h^b}\lal\bl v_h\cdot\bl\n_h, p^E-p_I\ral_e\\
&=\sum_{e\in\E_h^b}\lal\bl v_h\cdot\bl\n_h, p^E-p_I\ral_e.
\end{aligned}
\end{equation*}
Therefore
\begin{equation*}
\begin{aligned}\label{err_5}
E_h&= a_h(\bl u^E-\bl u_I,\bl v_h)+b_{h1}(\bl v_h,p^E-p_I)+b_{h0}(\bl u^E-\bl u_I,q_h)\\
&=a_h(\bl u^E-\bl u_I,\bl v_h)+\sum_{e\in\E_h^b}\lal\bl v_h\cdot\bl\n_h, p^E-p_I\ral_e.
\end{aligned}
\end{equation*}
By the Cauchy-Schwarz inequality, lemmas \ref{lem:app_u-uI},\,\ref{lem:vn_e2h},\,\ref{lem:TRACE},\,\ref{lem:extension} and Inequality (\ref{pi_app}), one gets
\begin{equation*}
\begin{aligned}
E_h&\leq \|\bl u^E-\bl u_I\|_{0,h} \|\bl v_h\|_{0,h} +\sum_{e\in\E_h^b}h_K^{-1/2}\|\bl v_h\cdot\bl\n_h\|_{0,e}h_K^{1/2}\|p^E-p_I\|_{0,e}\\
&\lesssim (\|\bl u^E-\bl u_I\|_{0,h} +\|p^E-p_I\|_{0,\Omega_h}+h|p^E-p_I|_{1,\Omega_h}) \|\bl v_h\|_{0,h}\\
&\lesssim h^{\min\{r,t,k\}}(|\bl u|_{r,\Omega}+|\bl u|_{r+1,\Omega}+|p|_{t,\Omega})\|\bl v_h\|_{0,h}.
\end{aligned}
\end{equation*}
Combining the above estimate of $E_h$ above and the estimate of $E_R$ in Lemma \ref{lem:consistency_err}, we obtain the proof of the lemma.
\end{proof}

Using the above lemmas and the triangle inequality, we get the main theorem of this section:

\begin{thm}\label{thm:L2err}
Under the same assumption of Lemma \ref{lem:uI-uh}, one has
\begin{equation}
\begin{aligned}\label{arbitrary_m}
&\|\bl u^E-\bl u_h\|_{0,h}+\|p^E-p_h\|_{0,\Omega_h}\\
\lesssim &~h^{\min\{r,t,k\}}(|\bl u|_{r,\Omega}+|\bl u|_{r+1,\Omega}+|p|_{t,\Omega})
 +\delta^{m+1}h^{-1/2}\|D^{m+2}\bl u\|_{0,\Omega} \\
&\qquad +\delta^{l}\bigg(\|D^l(\bl u^E+\nabla p^E)\|_{0,\Omega_h\backslash\Omega}+\|D^l(\mathrm{div}\,\bl u^E-f^E)\|_{0,\Omega_h\backslash \Omega}\bigg).
\end{aligned}
\end{equation}
Moreover, the terms $\|D^l(\bl u^E+\nabla p^E)\|_{0,\Omega_h\backslash\Omega}$ and $\|D^l(\mathrm{div}\,\bl u^E-f^E)\|_{0,\Omega_h\backslash \Omega}$ vanish when $\Omega$ is convex.
\end{thm}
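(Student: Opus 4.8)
The plan is to reduce \eqref{arbitrary_m} to the two error lemmas already established, by splitting through the interpolants. First I would introduce $\bl u_I = I_h \bl u^E \in V_h$ and $p_I \in Q_{0h}$ the $L^2(\Omega_h)$-projection of $p^E$ (exactly as in Lemmas \ref{lem:app_u-uI} and \ref{lem:uI-uh}), and apply the triangle inequality in each norm:
\begin{equation*}
\|\bl u^E-\bl u_h\|_{0,h}+\|p^E-p_h\|_{0,\Omega_h}\leq \big(\|\bl u^E-\bl u_I\|_{0,h}+\|p^E-p_I\|_{0,\Omega_h}\big)+\big(\|\bl u_I-\bl u_h\|_{0,h}+\|p_I-p_h\|_{0,\Omega_h}\big).
\end{equation*}
Then I would bound the first bracket by Lemma \ref{lem:app_u-uI}, which contributes the $h^{\min\{r,t,k\}}(|\bl u|_{r,\Omega}+|\bl u|_{r+1,\Omega}+|p|_{t,\Omega})$ term, and the second bracket by Lemma \ref{lem:uI-uh}, which contributes the remaining two groups of terms (the boundary-correction term $\delta^{m+1}h^{-1/2}\|D^{m+2}\bl u\|_{0,\Omega}$ and the geometric residuals over $\Omega_h\backslash\Omega$). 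Summing the two estimates and combining constants yields \eqref{arbitrary_m}.

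For the last assertion of the theorem, I would recall from Section \ref{sec2:notation} that a convex $\Omega$ satisfies $\Omega_h\subset\Omega$, so that $\Omega_h\backslash\Omega=\emptyset$; hence $\|D^l(\bl u^E+\nabla p^E)\|_{0,\Omega_h\backslash\Omega}$ and $\|D^l(\mathrm{div}\,\bl u^E-f^E)\|_{0,\Omega_h\backslash\Omega}$ are norms over the empty set and therefore vanish. This is consistent with the fact, used repeatedly in the derivation of the error equation, that $\bl u^E+\nabla p^E$ and $\mathrm{div}\,\bl u^E-f^E$ already vanish on $\Omega$ itself and only survive on the mismatch region $\Omega_h\backslash\Omega$.

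Since both ingredients are in hand, there is no real obstacle left: the theorem is pure bookkeeping on top of Lemmas \ref{lem:app_u-uI}--\ref{lem:uI-uh}, which in turn rest on the consistency estimate (Lemma \ref{lem:consistency_err}) and the discrete stability bound \eqref{B_h_bound}. The one point I would state with care is that $p_I,p_h\in Q_{0h}$ are both mean-value-free on $\Omega_h$, so $\|\cdot\|_{0,\Omega_h}$ is the natural norm throughout and the triangle-inequality split introduces no mean-value discrepancy; the only genuine mean-value subtlety in the whole development was already absorbed inside the inf--sup proof (Lemma \ref{lem:LBB}).
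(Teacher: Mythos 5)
Your proposal is correct and follows exactly the paper's route: the theorem is obtained by the triangle inequality through the interpolants $\bl u_I$ and $p_I$, bounding the first half by Lemma \ref{lem:app_u-uI} and the second by Lemma \ref{lem:uI-uh}, and the convexity remark follows from $\Omega_h\subset\Omega$ as noted in Section \ref{sec2:notation}. Your additional observation that $p_I,p_h\in Q_{0h}$ keeps the split free of mean-value issues is a fair point of care, though not one the paper needed to spell out.
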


\begin{re}\label{re:an_important_trick}
(An implementation trick). When $m=k$, one has $T^m \bl v_h = \bl v_h$ for all $\bl v_h\in V_h$, which greatly simplifies the implementation as one no longer needs to compute the Taylor expansion. 
\end{re}

For a given polynomial order $k\ge 1$, we want to find when the proposed discretization \eqref{Wh1} reaches the optimal convergence rate $O(h^k)$. 
Suppose that Inequality \eqref{deta} holds, that is,  $\delta\lesssim h^2$.
Then Theorem \ref{thm:L2err} indicates that one has to have
$$
r\ge k,\quad t\ge k,\quad 2l\ge k,\quad 2m+3/2\ge k, 
$$
to obtain $O(h^k)$ convergence. By taking $r=t=k$ and $l=\max\{1,k-1\}$, we get the following corollary:

\begin{cro} \label{cor:error}
Assume \eqref{deta} holds, $\max\{0, k/2 - 3/4\} \le m\le k$, and $l=\max\{1,k-1\}$. 
  Let $(\bl u,p)\in \bl H^{k+2}(\Omega)\times H^{l+1}(\Omega)$ be the solution to Problem (\ref{primal_problem}), $f\in H^{l}(\Omega)$,
  and $(\bl u_h,p_h)\in V_h\times Q_{0h}$ be the discrete solution of (\ref{Wh1}), then
\begin{equation} \label{L2err_pi}
\|\bl u^E-\bl u_h\|_{0,h}+\| p^E-p_h\|_{0,\Omega_h} \lesssim h^k \left( \|\bl u\|_{k+2,\Omega} + \|p\|_{l+1,\Omega} + \|f\|_{l,\Omega} \right).
\end{equation}
  When $\Omega$ is convex, the above result holds for $l=k-1$.
\end{cro}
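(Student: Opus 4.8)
The plan is to specialize Theorem \ref{thm:L2err} by making concrete choices for the parameters $r$, $t$, $l$, and $m$, and then to bound each of the three groups of terms on the right-hand side of \eqref{arbitrary_m} by $h^k$ times a Sobolev norm of the data. First I would set $r = t = k$, so that the first term becomes $h^{\min\{k,k,k\}}(|\bl u|_{k,\Omega} + |\bl u|_{k+1,\Omega} + |p|_{k,\Omega})$, which is clearly $\lesssim h^k(\|\bl u\|_{k+2,\Omega} + \|p\|_{l+1,\Omega})$ since $l + 1 = \max\{2, k\} \ge k$. For this choice to be legitimate I must check the hypotheses of Lemma \ref{lem:uI-uh}: we need $\max\{m,1\} \le r + 1 \le k+1$, which holds since $m \le k$ forces $\max\{m,1\} \le k \le k+1$; we need $0 \le t \le k$, which holds with equality; and we need the regularity $\bl u \in \bl H^{\max\{r+1,l+1,m+2\}}(\Omega) = \bl H^{\max\{k+1, l+1, m+2\}}(\Omega)$. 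Since $l + 1 = \max\{2,k\} \le k+1$ and $m + 2 \le k+2$, the required Sobolev index is at most $k+2$, which is exactly the assumed regularity $\bl u \in \bl H^{k+2}(\Omega)$; similarly $p \in H^{\max\{t, l+1\}}(\Omega) = H^{l+1}(\Omega)$ and $f \in H^l(\Omega)$ match the corollary's hypotheses.

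Next I would handle the boundary-correction term $\delta^{m+1} h^{-1/2} \|D^{m+2}\bl u\|_{0,\Omega}$. Using $\delta \lesssim h^2$ from \eqref{deta}, this is $\lesssim h^{2(m+1)} h^{-1/2} \|D^{m+2}\bl u\|_{0,\Omega} = h^{2m + 3/2} \|\bl u\|_{m+2,\Omega}$. The exponent $2m + 3/2 \ge k$ is equivalent to $m \ge k/2 - 3/4$, which together with $m \ge 0$ is precisely the assumption $\max\{0, k/2 - 3/4\} \le m$ in the corollary; and since $m + 2 \le k + 2$, the norm $\|\bl u\|_{m+2,\Omega} \lesssim \|\bl u\|_{k+2,\Omega}$ is controlled. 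Then for the geometric consistency terms I would invoke Lemma \ref{lem:ER_1}: using $\delta \lesssim h^2$ again, $\delta^l \lesssim h^{2l}$, and with $l = \max\{1, k-1\}$ one has $2l = \max\{2, 2k-2\} \ge k$ for all $k \ge 1$ (the two cases being $k = 1$, where $2l = 2 \ge 1$, and $k \ge 2$, where $2l = 2k - 2 \ge k$). The factors $\|D^l(\bl u^E + \nabla p^E)\|_{0,\Omega_h \setminus \Omega}$ and $\|D^l(\mathrm{div}\,\bl u^E - f^E)\|_{0,\Omega_h \setminus \Omega}$ are bounded, via the extension stability of Lemma \ref{lem:extension}, by $\|\bl u\|_{l+1,\Omega} + \|p\|_{l+1,\Omega} + \|f\|_{l,\Omega} \lesssim \|\bl u\|_{k+2,\Omega} + \|p\|_{l+1,\Omega} + \|f\|_{l,\Omega}$, using $l + 1 \le k + 1 \le k + 2$. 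Finally, when $\Omega$ is convex, Theorem \ref{thm:L2err} asserts these two terms vanish outright, so the constraint $2l \ge k$ is no longer needed and one may take the smaller value $l = k - 1$ (which for $k = 1$ degenerates to $l = 0$, consistent with $f \in H^0(\Omega) = L^2(\Omega)$ and $p \in H^1(\Omega)$).

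Assembling the three bounds and absorbing the various $h^{2m+3/2}$, $h^{2l}$ into $h^k$ for $h \le 1$ gives \eqref{L2err_pi}. I expect no serious obstacle here — the corollary is essentially a bookkeeping exercise in parameter selection — but the one point demanding care is verifying that all the regularity indices appearing implicitly in Lemmas \ref{lem:uI-uh} and \ref{lem:ER_1}, namely $\max\{r+1, l+1, m+2\}$ for $\bl u$ and $\max\{t, l+1\}$ for $p$, are simultaneously dominated by the single clean hypothesis $(\bl u, p) \in \bl H^{k+2}(\Omega) \times H^{l+1}(\Omega)$ with $f \in H^l(\Omega)$; this is where one must be careful that $m \le k$ (so $m + 2 \le k+2$) and $l \le k$ (so $l + 1 \le k+1$) are both in force, and that the lower bounds on $m$ and on $l$ are exactly what is needed to kill the $h$-powers. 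I would present the argument as a short sequence of displayed inequalities, one per term, followed by the convex-case remark.
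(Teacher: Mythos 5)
Your proposal is correct and follows essentially the same route as the paper: the corollary is obtained by specializing Theorem \ref{thm:L2err} with $r=t=k$, using $\delta\lesssim h^2$ to convert $\delta^{m+1}h^{-1/2}$ and $\delta^l$ into $h^{2m+3/2}$ and $h^{2l}$, and checking that the stated lower bounds on $m$ and $l$ make these exponents at least $k$, with the convex case dropping the $\delta^l$ terms. Your additional verification that the regularity indices $\max\{r+1,l+1,m+2\}$ and $\max\{t,l+1\}$ are dominated by $k+2$ and $l+1$ is a worthwhile bookkeeping step that the paper leaves implicit.
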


\section{Numerical experiment}\label{sec7:numerical}
In this section, we present numerical results for problem \eqref{primal_problem} on two types of curved domains, as shown in Fig. \ref{meshes}. Body-fitted, unstructured triangular meshes are used in the computation. We always set $m=k$, which simplifies the implementation according to Remark \ref{re:an_important_trick}. For convenience, denote
$$E_{u,p}=\|\bl u^E-\bl u_h\|_{0,h}+\|p^E-p_h\|_{0,\Omega_h}.$$
If no boundary correction technique is used, then $\|\cdot\|_{0,h}$ is just the $H(\textrm{div},\Omega_h)$ norm.

\begin{figure}[!htbp]
\centering
\subfigure[]
{
\includegraphics[height=4cm,width=5.5cm]{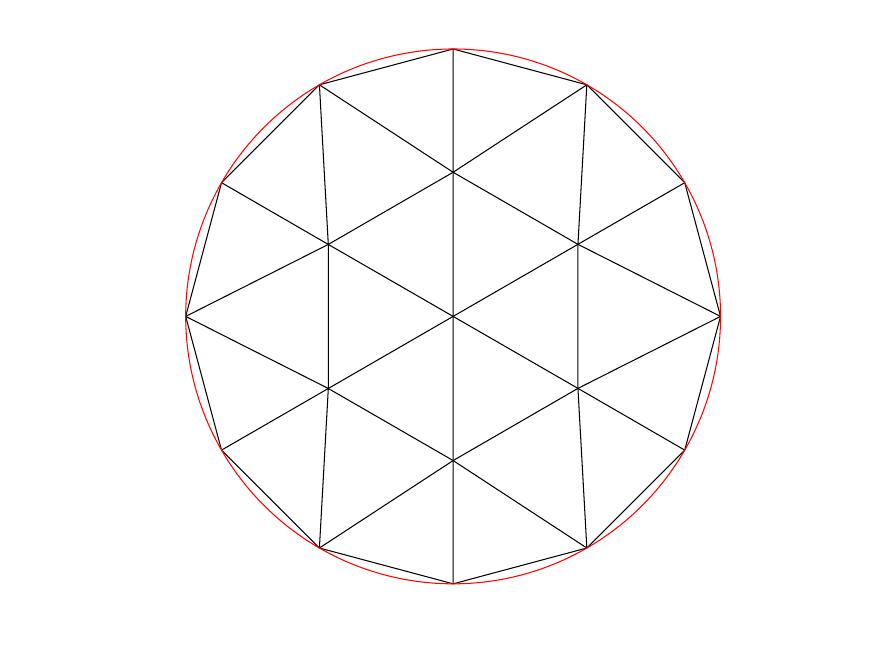}
}\qquad
\subfigure[]
{
\includegraphics[height=4cm,width=5.5cm]{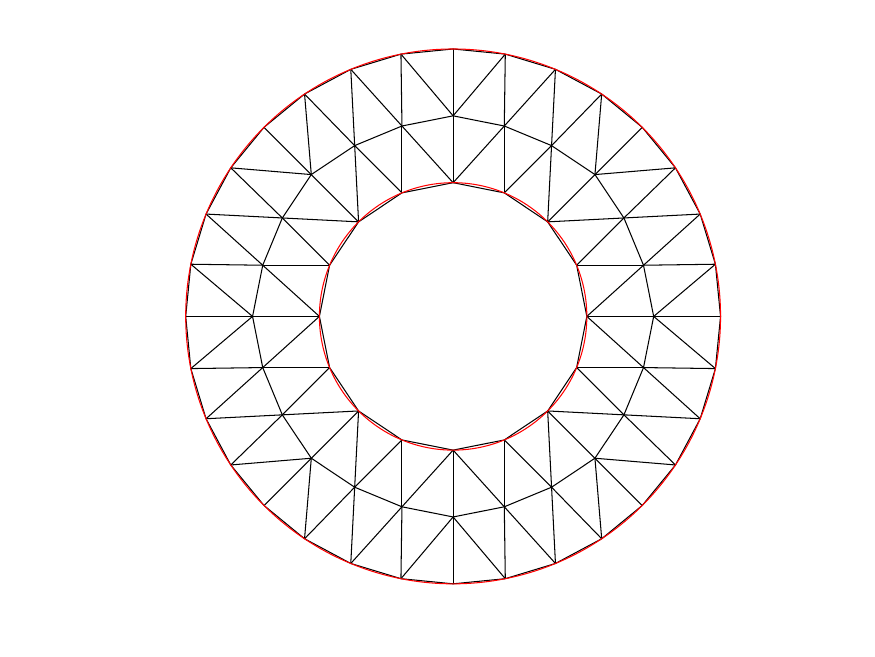}
}\qquad
\caption{(a). The mesh with $h=1/2$ on circular domain. (b). The mesh with $h=1/4$ on ring domain.}
\label{meshes}
\end{figure}

\vspace{0.2cm}
\noindent${\bf{Example ~1. ~Circular ~domain}}$. 
\vspace{0.2cm}

The domain is a unit disk $\{(x,y):x^2+y^2< 1\}$. The exact solution is
\begin{equation*}
\begin{aligned}
\bl u(x,y)=\dbinom{-3x^2-y^2+3}{-2xy},\qquad p(x,y)=-3x+x(x^2+y^2),
\end{aligned}
\end{equation*}
which satisfies a homogeneous Neumann boundary condition $\bl u\cdot\bl\n=0$ on $\Gamma$, and $\int_{\Omega}p\dif x=0$.
Note that $\bl u$, $p$ and $f$ are smooth, i.e. they satisfy the regularity requirement of Cor. \ref{cor:error}. We solve Example $1$ on meshes illustrated in Fig. \ref{meshes} (a), for various size of $h$.

We first test Example $1$ using the primal mixed formulation \cite{book:FE_RT} with $BDM_k$ discretization, without boundary correction. A homogeneous Neumann (essential) boundary condition is imposed on $\Gamma_h$. We report $E_{u,p}$
in Tab. \ref{tab_eg2_NCu}, which only reaches $O(h^{1/2})$ for $k=1,2,3$, i.e. there is a loss of accuracy as expected. 

\begin{table}[!htbp]
  \caption{\small\textit{Errors of Example 1 without boundary correction on circular domain.}}
  \label{tab_eg2_NCu}
  \begin{center}
  {\small
  \begin{tabular}{c | c| c| c|c|c|c}
  \hline
  \multirow{2}{*}{$h$}&\multicolumn{2}{c|}{$k=1$}&\multicolumn{2}{c|}{$k=2$} &\multicolumn{2}{c}{$k=3$} \\
  \cline{2-7}
            & $E_{u,p}$  & order  & $E_{u,p}$ & order & $E_{u,p}$  & order\\
  \hline
  1/8   &  4.89e-01   &   --    &   3.48e-01   &   --     &   3.44e-01   &   --\\
  \hline
  1/16  &  3.06e-01   &   0.68   &   2.53e-01   &   0.46   &   2.51e-01   &   0.45\\
  \hline
  1/32  &  1.96e-01   &   0.65   &   1.77e-01   &   0.52   &   1.76e-01   &   0.51\\
  \hline
  1/64   &  1.29e-01   &  0.60     &   1.22e-02   & 0.54     &   1.22e-02   &   0.53  \\
  \hline
  \end{tabular}}
  \end{center}
  \end{table}

We then solve the same problem using the proposed discretization (\ref{Wh1}) with boundary correction. 
We first set $m=k$ and present the result in Tab. \ref{tab_eg2_Cu}. An $O(h^{k})$ optimal convergence is observed, which agrees well with the theoretical result in Cor. \ref{cor:error}.

Cor. \ref{cor:error} also gives a lower bound $m\ge \max\{0,k/2-3/4\}$ in order for the scheme to reach optimal convergence.
For $k=1,2,3$, the smallest $m$ that satisfies the lower bound is $0,1,1$, respectively. We test the scheme (\ref{Wh1}) with these values, and present the results in Tab. \ref{tab_eg2_TCu}. 
Again, optimal $O(h^{k})$ convergence is observed, as predicted in Cor. \ref{cor:error}.

\begin{table}[!htbp]
  \caption{\small\textit{Errors of Example 1 with boundary correction for $m=k$ on circular domain.}}
  \label{tab_eg2_Cu}
  \begin{center}
  {\small
  \begin{tabular}{c | c| c| c|c|c|c}
  \hline
  \multirow{2}{*}{$h$}&\multicolumn{2}{c|}{$k=1$}&\multicolumn{2}{c|}{$k=2$} &\multicolumn{2}{c}{$k=3$} \\
  \cline{2-7}
            & $E_{u,p}$  & order  & $E_{u,p}$ & order & $E_{u,p}$  & order\\
  \hline
  1/8   &     3.84e-01   &   --    &   3.54e-03   &   --     &   6.54e-05      &   --   \\
  \hline
  1/16   &    1.94e-01   &   0.98  &   7.61e-04   &   2.22   &   7.45e-06      &   3.13   \\
  \hline
  1/32   &    9.47e-02   &   1.03  &   1.67e-04   &   2.18   &   7.85e-07      &   3.25   \\
  \hline
  1/64   &    4.64e-02   &   1.03  &   3.86e-05   &   2.12   &   8.73e-08      &   3.17   \\
  \hline
  \end{tabular}}
  \end{center}
\end{table}
\begin{table}[!htbp]
  \caption{\small\textit{Errors of Example 1 with boundary correction for suitable $m$ on circular domain.}}
  \label{tab_eg2_TCu}
  \begin{center}
  {\small
  \begin{tabular}{c|c|c|c|c|c|c}
  \hline
  \multirow{2}{*}{$h$}&\multicolumn{2}{c|}{$k=1,m=0$}&\multicolumn{2}{c|}{$k=2,m=1$}&\multicolumn{2}{c}{$k=3,m=1$} \\
  \cline{2-7}
            & $E_{u,p}$  & order  & $E_{u,p}$  & order& $E_{u,p}$  & order\\
  \hline
  1/8    &3.86e-01 & --   & 3.57e-03 & --   & 6.47e-05  &  --\\
  \hline
  1/16   &1.95e-01 & 0.99 & 7.71e-04 & 2.21 & 7.42e-06  &  3.13\\
  \hline
  1/32   &9.48e-02 & 1.04 & 1.69e-04 & 2.19 & 7.82e-07  &  3.25 \\
  \hline
  1/64   &4.65e-02 & 1.03 & 3.89e-05 & 2.12 & 8.68e-08  &  3.17\\
  \hline
  \end{tabular}}
  \end{center}
  \end{table}

\vspace{0.2cm} 
\noindent$\bf{Example~ 2. ~Ring ~domain.}$
\vspace{0.2cm}

We repeat the above tests in a ring domain $\{(x,y)|0.25< x^2+y^2< 1\}$, which is not convex. The exact solution is
\begin{equation*}
\begin{aligned}
\bl u(x,y)=\dbinom{2\pi \cos(2\pi x)\sin(2\pi y)}{ 2\pi\sin(2\pi x)\cos(2\pi y)},\qquad p(x,y)=-\sin(2\pi x)\sin(2\pi y),
\end{aligned}
\end{equation*}
which satisfies a non-homogeneous Neumann boundary condition $\bl u\cdot\bl\n\neq0$ on $\Gamma$ and $\int_{\Omega}p\dif x=0$.
This time, we did not perform the test without boundary correction, since there is no effective way to impose the nonhomogeneous Neumann boundary condition on $\Gamma_h$.

We test Example $2$ on meshes illustrated in Fig. \ref{meshes} (b), for various sizes of $h$, using the scheme (\ref{Wh1}) with boundary correction. 
The results are reported in Tabs. \ref{tab_eg2_Ru} and \ref{tab_eg2_TRu},
both indicating optimal convergence.
This verifies that Cor. \ref{cor:error} holds on nonconvex domains.

\begin{table}[!htbp]
\caption{\small\textit{Errors of Example 2 with boundary correction for $m=k$ on ring domain.}}
\label{tab_eg2_Ru}
\begin{center}
{\small
\begin{tabular}{c | c| c| c|c|c|c}
\hline
\multirow{2}{*}{$h$}&\multicolumn{2}{c|}{$k=1$}&\multicolumn{2}{c|}{$k=2$} &\multicolumn{2}{c}{$k=3$} \\
\cline{2-7}
    & $E_{u,p}$  & order  & $E_{u,p}$ & order & $E_{u,p}$  & order\\
\hline
1/8 & 1.36e+01   &   --     &   1.66e+00   &   --     &   1.50e-01   &   -- \\
\hline
1/16 & 6.86e+00  &   0.99   &   4.20e-01   &   1.98   &   1.89e-02   &   2.99\\
\hline
1/32& 3.44e+00   &   1.00   &   1.05e-01   &   2.00   &   2.37e-03   &   3.00\\
\hline
1/64& 1.72e+00   &   1.00   &   2.64e-02   &   2.00   &   2.96e-04   &   3.00 \\
\hline
\end{tabular}}
\end{center}
\end{table}

\begin{table}[!htbp]
  \caption{\small\textit{Errors of Example 2 with boundary correction for suitable $m$ on ring domain.}}
  \label{tab_eg2_TRu}
  \begin{center}
  {\small
  \begin{tabular}{c | c|c|c|c|c|c}
  \hline
  \multirow{2}{*}{$h$}&\multicolumn{2}{c|}{$k=1,m=0$}&\multicolumn{2}{c|}{$k=2,m=1$}&\multicolumn{2}{c}{$k=3,m=1$} \\
  \cline{2-7}
            & $E_{u,p}$  & order  & $E_{u,p}$ & order &  $E_{u,p}$  & order\\
   \hline
  1/8 &  1.36e+01  &   --  & 1.66e-00& --  &1.50e-01&--\\
  \hline
  1/16& 6.86e+00   &   0.99& 4.20e-01& 1.98&1.89e-02&2.99\\
  \hline
  1/32& 3.44e+00   &   1.00& 1.05e-01& 2.00&2.37e-03&3.00\\
  \hline
  1/64& 1.72e+00   &   1.00& 2.64e-02& 2.00&2.96e-04&3.00\\
  \hline
  \end{tabular}}
  \end{center}
  \end{table}

\section{Conclusion}\label{sec8:conclusion}
In this paper, we consider the mixed finite element method for second-order elliptic equations on domains with curved boundaries.
A boundary value correction technique is proposed to compensate for the loss of accuracy in the discretization. 
The proposed scheme achieves optimal convergence.
Numerical experiments further validate the theoretical results.

 \bibliographystyle{abbrv}

\end{document}